\documentclass[11pt, reqno]{amsart}
\usepackage{amsfonts}
\usepackage{mathrsfs}
\usepackage[active]{srcltx}
\usepackage{mathrsfs}
\usepackage{mathtools}
\usepackage{longtable}
 
\usepackage{amsmath,amsfonts,latexsym,graphicx,amssymb,url}
\usepackage{amsmath
}
\usepackage[active]{srcltx}
\usepackage{wasysym,pstricks, enumerate}
\usepackage{lscape,color,subfigure,graphicx,wrapfig}
\numberwithin{equation}{section}
\usepackage{mathtools}
\usepackage{enumerate}
\usepackage{enumitem}
\usepackage{mathrsfs}
\usepackage{bbm}
\usepackage{float}

\newcommand{\diff}{\,\mathrm{d}}
\DeclareMathOperator{\vol}{Vol}

\usepackage{xcolor}
\definecolor{bluecite}{HTML}{0875b7}
\usepackage[unicode=true,
bookmarksopen={true},
pdffitwindow=true,
colorlinks=true,
linkcolor=bluecite,
citecolor=bluecite,
urlcolor=bluecite,
hyperfootnotes=false,
pdfstartview={FitH},
pdfpagemode= UseNone]{hyperref}

\textwidth 17.6cm \textheight 22.0cm \topmargin 0.5cm
\oddsidemargin -0.5cm 
\evensidemargin -0.5cm 

\newtheorem{proposition}{Proposition}[section]
\newtheorem{theorem}{Theorem}[section]

\newtheorem{corollary}{Corollary}[section]
\theoremstyle{definition}
\newtheorem{definition}{Definition}[section]
\newtheorem{remark}{Remark}[section]
\numberwithin{equation}{section}

\usepackage{bbm}
\usepackage{dsfont}

\pagebreak
\address{\textsc{S\'andor Kaj\'ant\'o}:
	Department of Mathematics, Babe\c s-Bolyai University, Cluj-Napoca, Romania
}
\email{kajanto.sandor@math.ubbcluj.ro
}

\address{\textsc{Alexandru Krist\'aly}:
	Department of Economics, Babe\c s-Bolyai University, Cluj-Napoca, Romania  \& Institute of Applied Mathematics, \'Obuda
	University,
	Budapest, Hungary
}
\email{alex.kristaly@econ.ubbcluj.ro; kristaly.alexandru@uni-obuda.hu
}

\address{\textsc{Ioan Radu Peter}:
 Department  of Mathematics, Technical University of Cluj-Napoca, Memorandumului 28, RO-400114, Cluj-Napoca, Romania} \email
{ioan.radu.peter@math.utcluj.ro}

\address{\textsc{Wei Zhao}:
Department of Mathematics,
East China University of Science and Technology,
Shanghai, 200237,  China}
\email{szhao\underline{ }wei@yahoo.com}

\subjclass{26D10, 26D15, 58J05, 58J60}

\subjclass[]{26D10, 58J05, 58J60, 35A23, 49R05,
	46E35}
\keywords{{Riccati pairs, Hardy inequalities, Riemannian manifolds}}
\thanks{S. Kaj\'ant\'o  was supported by the UEFISCDI/CNCS grant PN-III-P4-ID-PCE2020-1001. A. Krist\'aly  was supported by the Excellence Researcher Program \'OE-KP-2-2022 of  \'Obuda University, Hungary, and by the 
	 UEFISCDI/CNCS grant PN-III-P4-ID-PCE2020-1001, Romania. W. Zhao was supported by Natural Science Foundation of Shanghai (No. 21ZR1418300).
}

\title[Riccati pairs]{A generic functional inequality and Riccati pairs: \\an alternative approach to Hardy-type inequalities}

\date{\today}

\author[S. Kaj\'ant\'o, A.\ Krist\'aly, I.R. Peter and W. Zhao]{S\'andor Kaj\'ant\'o, Alexandru Krist\'aly, Ioan Radu Peter and Wei Zhao}

\begin{document}
\begin{abstract}
	We present a generic functional inequality on Riemannian manifolds, both in \emph{additive} and \emph{multiplicative} forms, that produces well known  and genuinely new Hardy-type inequalities.
	For the additive  version, we introduce \emph{Riccati pairs} that extend Bessel pairs developed by Ghoussoub and Moradifam (\textit{Proc.\ Natl.\ Acad.\ Sci.\ USA}, 2008 \& \textit{Math.\ Ann.}, 2011). This concept enables us to give very short/elegant proofs of a number of celebrated functional inequalities  on Riemannian manifolds with sectional curvature bounded from above by simply solving a Riccati-type ODE. Among others,  we provide alternative proofs for Caccioppoli inequalities, Hardy-type inequalities and their improvements,  spectral gap estimates, interpolation inequalities, and Ghoussoub-Moradifam-type weighted inequalities. Concerning the multiplicative form, we prove sharp uncertainty principles on Cartan-Hadamard manifolds, i.e., Heisenberg-Pauli-Weyl uncertainty principles, Hydrogen uncertainty principles and Caffarelli-Kohn-Nirenberg inequalities. Some sharpness and rigidity phenomena are also discussed.

\end{abstract}
	\maketitle
	\vspace{-1cm}
	\tableofcontents
	\vspace{-1.4cm}
	\section{Introduction}

More than one hundred years elapsed since the first form of the celebrated \textit{Hardy inequality} appeared (sometimes called also as an \textit{uncertainty principle}), see Hardy \cite{Hardy}. It states that the $L^2$-norm of the singular term
$u(x)/|x|$ is controlled by the $L^2$-Dirichlet norm of the function $u\in C_0^\infty(\mathbb R^n)$. Several extensions and improvements of the Hardy inequality can be found by now in the literature, which became indispensable from the point of view of applications; indeed, solutions of a large class of elliptic problems involving singular terms are based on the validity of Hardy inequalities.\
Comprehensive discussions concerning these inequalities can be found in the monographs by Balinsky,  Evans and Lewis \cite{BalEvLew1}, Ghoussoub and Moradifam \cite{GM-book}, and  Ruzhansky and Suragan \cite{Ruzhansky-book}.

Generally speaking,  Hardy-type inequalities can be written in the following forms:
\begin{align}
	\label{eq:h}\int_\Omega V|\nabla u|^p\diff {\sf m}&\geq \int_\Omega W|u|^p\diff {\sf m},\quad \forall u\in C_0^\infty(\Omega),\tag{{\bf H}}\\
	\label{eq:up}\left(\int_\Omega V|\nabla u|^p\diff {\sf m}\right)^\frac{1}{p}\left(\int_\Omega W_1|u|^p\diff{\sf m}\right)^\frac{1}{p'}   &\geq \int_\Omega W_2|u|^p\diff {\sf m},\quad \forall u\in C_0^\infty(\Omega),\tag{{\bf UP}}
\end{align}
where $p>1$,  $p'=\frac{p}{p-1}$ is the \emph{conjugate} of $p$, $\Omega $ is an open subset of an ambient space $M$ (which could be the Euclidean space $\mathbb R^n$, any Riemannian/Finsler manifold, or a stratified group), ${\sf m}$ is a measure on $M$ and $V,W,W_1,W_2\colon\Omega\to (0,\infty)$ are certain potentials, possibly containing singular terms. 

Key observations related to problem \eqref{eq:h} have been made (mostly for $p=2$) by Adimurthi,  Chaudhuri and Ramaswamy  \cite{ACR},  Brezis and Marcus \cite{BM}, Brezis and V\'azquez \cite{BV}, Devyver, Fraas and Pinchover~\cite{DFP}, Fefferman \cite{Fefferman}, Filippas,  Maz'ya and Tertikas \cite{FilMazTert1, FilMazTert3}, Filippas and Tertikas \cite{Fil-Ter},
 Muckenhoupt \cite{Muckenhoupt},  Ruzhansky and Suragan \cite{Ruzhansky-AdvMath}, Tertikas and Zographopoulos \cite{Tertikas}, etc.

A milestone  result -- concerning problem  \eqref{eq:h} -- has been provided by Ghoussoub and  Moradifam \cite{GM-Annalen, GM-PNAS} (for $p=2$ and the Euclidean setting), where the authors proved that \eqref{eq:h} holds if and only if  $(V,W)$ is a \textit{Bessel pair}. The latter notion is based on the solvability of a second order linear ODE containing the potentials $V$ and $W$. We note that this ODE agrees with the radial equation obtained from the classical approach of \emph{supersolutions}, which emerges from the early works of Allegretto~\cite{All} and Moss and Piepenbrink~\cite{Moss}, see Remark~\ref{rem:supersolution}.  The concept of Bessel pairs was extended to general $p>1$, see Duy, Lam and Lu \cite{DuyLL}, and also has applications on nonpositively curved Riemannian manifolds, see Flynn,  Lam,  Lu and Mazumdar \cite{Flynn-JGA}, where still the usual notion of Bessel pairs and fine comparison arguments are used. 

In this paper we use a genuinely different approach to prove Hardy inequalities. First we provide a generic inequality on Riemannian manifolds in both \emph{additive} and \emph{multiplicative} forms, that turn out to be \emph{equivalent} to each other, producing inequalities of type \eqref{eq:h} and \eqref{eq:up}, respectively. Since these inequalities contain the Laplacian of a given potential, -- which implicitly encode curvature information about the manifold -- an appropriate comparison argument furnishes in the additive form a  Riccati-type ordinary differential inequality  that leads us to the notion of \textit{Riccati pair} for certain potentials. This notion turns out to be extremely efficient to prove inequalities of the type  \eqref{eq:h}.
It is worth to be pointed out that Bobkov and Götze~\cite{Bobkov-Gotze} already provided some integral estimates on certain domains  by using 
canonical Riccati and Sturm-Liouville equations on the real line.

To present our approach, we put ourselves into the realm of Riemannian manifolds. Let $\nabla_g$, $\Delta_g$,  $|\cdot|$ and ${\rm d}v_g$ be the gradient, Laplace-Beltrami operator, norm and standard volume form on a Riemannian manifold $(M,g)$, respectively. Here is our first main result containing the generic inequality (for a weighted version, see Theorem \ref{theorem-weighted-main}): 
	
\begin{theorem}\label{theorem-main-0}
	Let $(M,g)$ be a complete, non-compact $n$-dimensional Riemannian manifold, with  $n\geq 2$. Let  $\Omega\subseteq M$ be a domain, $p>1$, and $\rho\in W_{\rm loc}^{1,p}(\Omega)$ be a positive function with $|\nabla_g\rho|=1$  $\diff v_g$-a.e.\ in $\Omega$. Let  $G\colon(0,\sup_\Omega\rho)\to \mathbb R$ and $H\colon \mathbb{R}\to\mathbb{R}$  be  $C^1$  functions such that
	\begin{enumerate}[label=\rm({\bf G})$_\rho$]
		\item\label{item:C_0-no-w-0} {\rm :} $G(\rho)\in L_{\rm loc}^{p'}(\Omega)$, $G'(\rho)\in L_{\rm loc}^{1}(\Omega),$
	\end{enumerate}
	and $H(0)=H'(0)=0$. The following inequalities hold.
	\begin{itemize}
		\item[{\rm (i)}] {\rm (Additive form)} For every  $u\in C_{0}^\infty(\Omega)$ one has
		\begin{equation}
			\int_\Omega	|\nabla_gu|^{p} \diff v_g \geq p \int_\Omega  \left[G'(\rho)+ G(\rho)\Delta_{g}\rho\right]H(u)\diff v_g {-(p-1)}\int_\Omega |G(\rho)|^{p'} |H'(u)|^{p'}\diff v_g.\label{additive-no-weight-0}
		\end{equation}
		\item[{\rm (ii)}] {\rm (Multiplicative form)} For every  $u\in C_{0}^\infty(\Omega)$ one has
		\begin{equation}\label{multiplicative-no-weight-0}
			\int_\Omega	|\nabla_gu|^{p}  \diff v_g \geq  \frac{\left|\displaystyle\int_\Omega  \left[G'(\rho)+ G(\rho)\Delta_{g}\rho\right]H(u)\diff v_g\right|^p}{\left(\displaystyle\int_\Omega |G(\rho)|^{p'} |H'(u)|^{p'}\diff v_g\right)^{p-1}},
		\end{equation}
		provided that there exists a neighborhood $U\subset \mathbb R$ of zero, such that $H'(s)\neq 0$, for every $s\in U\setminus \{0\}$ and $G(t)\neq 0$ for all $t\in (0,\sup_\Omega\rho)$. 
	\end{itemize}
\end{theorem}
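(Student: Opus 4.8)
The plan is to observe that the coefficient of $H(u)$ on the right-hand sides is exactly the divergence of the vector field $X:=G(\rho)\nabla_g\rho$. Indeed, the Leibniz rule together with the normalization $|\nabla_g\rho|=1$ gives
\[
	\di_g\bigl(G(\rho)\nabla_g\rho\bigr)=G'(\rho)\,|\nabla_g\rho|^2+G(\rho)\,\Delta_g\rho=G'(\rho)+G(\rho)\,\Delta_g\rho .
\]
Since $\rho$ is only of class $W^{1,p}_{\rm loc}(\Omega)$, the term $\Delta_g\rho$ must be read distributionally, and the whole expression $G'(\rho)+G(\rho)\Delta_g\rho$ is interpreted as the distributional divergence of $X$. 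Hypothesis \ref{item:C_0-no-w-0} is tailored precisely to this reading: $G(\rho)\in L^{p'}_{\rm loc}(\Omega)$ together with $|\nabla_g\rho|=1$ guarantees $X\in L^{p'}_{\rm loc}(\Omega)$ as a vector field, while $G'(\rho)\in L^1_{\rm loc}(\Omega)$ legitimizes the Leibniz identity. Accordingly, I read $\int_\Omega[G'(\rho)+G(\rho)\Delta_g\rho]H(u)\diff v_g$ as the pairing of $\di_g X$ with $H(u)$.

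The single engine of the proof is one integration by parts. Fix $u\in C_0^\infty(\Omega)$. As $H\in C^1$ with $H(0)=0$, the composition $H(u)$ is $C^1$, compactly supported in $\Omega$ (its support lies in $\operatorname{supp}u$), and obeys $\nabla_g(H(u))=H'(u)\nabla_g u$; in particular $H(u)$ is an admissible, compactly supported $W^{1,p}$ test object. Pairing $\di_g X$ with $H(u)$, with no boundary contribution, yields the identity
\[
	\int_\Omega\bigl[G'(\rho)+G(\rho)\Delta_g\rho\bigr]H(u)\diff v_g=-\int_\Omega G(\rho)\,H'(u)\,\langle\nabla_g\rho,\nabla_g u\rangle\diff v_g ,
\]
all integrals being finite on $\operatorname{supp}u$ by Hölder's inequality and \ref{item:C_0-no-w-0}.

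Both statements now follow by elementary pointwise estimates. For (i), I multiply the identity by $p$ and use Cauchy--Schwarz with $|\nabla_g\rho|=1$, namely $|\langle\nabla_g\rho,\nabla_g u\rangle|\le|\nabla_g u|$, followed by Young's inequality $p\,ab\le a^p+(p-1)b^{p'}$ (recall $\tfrac{p}{p'}=p-1$) with $a=|\nabla_g u|$ and $b=|G(\rho)|\,|H'(u)|$; integrating gives
\[
	p\int_\Omega\bigl[G'(\rho)+G(\rho)\Delta_g\rho\bigr]H(u)\diff v_g\le\int_\Omega|\nabla_g u|^p\diff v_g+(p-1)\int_\Omega|G(\rho)|^{p'}|H'(u)|^{p'}\diff v_g ,
\]
which is \eqref{additive-no-weight-0}. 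For (ii), I take absolute values in the identity and apply Cauchy--Schwarz and then Hölder's inequality with conjugate exponents $p,p'$, obtaining
\[
	\left|\int_\Omega\bigl[G'(\rho)+G(\rho)\Delta_g\rho\bigr]H(u)\diff v_g\right|\le\left(\int_\Omega|\nabla_g u|^p\diff v_g\right)^{\frac1p}\left(\int_\Omega|G(\rho)|^{p'}|H'(u)|^{p'}\diff v_g\right)^{\frac1{p'}};
\]
raising to the $p$-th power and using $\tfrac{p}{p'}=p-1$ yields \eqref{multiplicative-no-weight-0}. The nonvanishing hypotheses on $G$ and $H'$ enter only to guarantee that the denominator $\int_\Omega|G(\rho)|^{p'}|H'(u)|^{p'}\diff v_g$ is strictly positive for $u\not\equiv0$, so that the quotient is well defined; equivalently, (ii) can be extracted from (i) by substituting $G\mapsto\lambda G$, $\lambda>0$, and maximizing the resulting lower bound $p\lambda A-(p-1)\lambda^{p'}B$ over $\lambda$, whose optimum equals $A^p/B^{p-1}$.

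The only delicate point I anticipate is the rigorous justification in the first two paragraphs: when $\rho\in W^{1,p}_{\rm loc}(\Omega)$, the product $G(\rho)\Delta_g\rho$ is a priori ill-defined and the Leibniz identity is formal. I expect to handle this by working throughout with the distributional divergence of $X=G(\rho)\nabla_g\rho$, which is a well-defined functional on compactly supported $W^{1,p}$ functions by \ref{item:C_0-no-w-0}, possibly after a smooth approximation of $\rho$ on $\operatorname{supp}u$. Once the integration-by-parts identity is secured, parts (i) and (ii) are immediate.
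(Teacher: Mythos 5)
Your proof is correct and is essentially the paper's own argument: the paper's pointwise convexity (tangent-line) inequality $|\xi|^p\ge p|\eta|^{p-2}\xi\eta-(p-1)|\eta|^p$ evaluated at $\xi=\nabla_g u$, $\eta=v\nabla_g\rho$ is exactly your Cauchy--Schwarz-plus-Young estimate, combined with the same integration by parts of $\di_g\bigl(G(\rho)\nabla_g\rho\bigr)$ against $H(u)$ (the paper merely performs the pointwise estimate before integrating by parts, while you integrate by parts first), and the distributional reading of $G'(\rho)+G(\rho)\Delta_g\rho$ is handled at the same level of rigor as in the paper. For (ii), your direct H\"older derivation is equivalent to the paper's scaling argument (replace $H$ by $cH$ and maximize over $c$), an equivalence you yourself record, and your use of the nonvanishing hypotheses to ensure the denominator is positive matches the paper's $J_H>0$.
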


A seemingly surprising fact emerges: it turns out that  \eqref{additive-no-weight-0} and \eqref{multiplicative-no-weight-0} are \emph{equivalent} in the sense that they can be deduced from each other, see Remark \ref{rem:equivalence}. Accordingly, we may roughly say that  these two classes of functional inequalities have the same `algebraic' origin. This fact follows from the `duality' between $H(u)$ and $|H'(u)|^{p'}$, which can be viewed as a competition between the two terms, implying  the aforementioned equivalence through a simple scaling argument.


On the one hand, both inequalities in Theorem \ref{theorem-main-0} are \emph{generic}, that is for suitable choices of functions $\rho$, $G$ and $H$ they produce various functional inequalities. While the additive form  \eqref{additive-no-weight-0} is tailored to produce \textit{Hardy-type inequalities} in the form of \eqref{eq:h},  see \S \ref{section-Hardy}, the multiplicative form \eqref{multiplicative-no-weight-0} will provide various \textit{uncertainty principles} of the type \eqref{eq:up}, see \S \ref{section-uncertainty}. 

On the other hand, the aforementioned procedure can be reversed: for a number of Hardy-type inequalities one can find suitable choices of functions providing short/elegant proofs for them. To see this, let us focus on the additive form \eqref{additive-no-weight-0}. Let $p>1$ and observe that if $H(s)=|s|^p/p$ for all $s\in \mathbb R$, then $pH(s)=|H'(s)|^{p'}=|s|^p.$ This observation and the Laplace comparison (see Theorem \ref{comparison-theorem}) suggest the following notion (for a weighted form see Definition \ref{def:wRP}); hereafter,  $\mathcal H^n$ stands for the $n$-dimensional Hausdorff measure.

\begin{definition} Let $(M,g)$ be a complete, non-compact $n$-dimensional Riemannian manifold, with  $n\geq 2$. Let  $\Omega\subseteq M$ be a domain, $p>1$, and $\rho\in W_{\rm loc}^{1,p}(\Omega)$ be a positive function with $|\nabla_g\rho|=1$  $\diff v_g$-a.e.\ in $\Omega$.  Let us fix the continuous functions   $L,W\colon(0,\sup_\Omega\rho)\to (0,\infty)$.\
	We say that the couple $(L,W)$ is a $(p,\rho)$-\textit{Riccati pair  in} $(0,\sup_\Omega\rho)$ if there exists a  function $G\colon(0,\sup_\Omega\rho)\to \mathbb R$  such that 	
	\begin{enumerate}[label=\bf (r\arabic*)]
		\item\label{it:r1} \ref{item:C_0-no-w-0} holds (from Theorem \ref{theorem-main-0});
		\item\label{it:r2} $\Delta_g \rho\geq L(\rho)$ in the distributional sense in $\Omega,$ and  $G\geq 0$ if $\mathcal H^n(\{x\in \Omega:\Delta_g \rho(x) > L(\rho(x))\})\neq 0;$
		\item\label{it:r3} for every $t\in (0,\sup_\Omega\rho)$ one has
	\begin{equation}\label{Riccati pair-0-0}
		G'(t)+L(t)G(t){-(p-1)}|G(t)|^{p'}\geq  W(t).
	\end{equation}
	\end{enumerate}
	If a function $G$ satisfies the above conditions, then it is said to be \emph{admissible} for $(L,W)$.
\end{definition}

An efficient application of Theorem \ref{theorem-main-0} can be stated as follows (for a weighted form, see Theorem \ref{W-theorem}):

	

\begin{theorem}\label{Riccati-theorem-0}
	Let $(M,g)$ be a complete, non-compact $n$-dimensional Riemannian manifold, with  $n\geq 2$. Let  $\Omega\subseteq M$ be a domain, $p>1$, and $\rho\in W_{\rm loc}^{1,p}(\Omega)$ be a positive function with $|\nabla_g\rho|=1$  $\diff v_g$-a.e.\ in $\Omega$. Let $L,W\colon(0,\sup_\Omega\rho)\to (0,\infty)$ be continuous functions such that $(L,W)$ is a $(p,\rho)$-Riccati pair in $(0,\sup_\Omega\rho)$.
	Then for every
 $u\in C_0^{\infty}(\Omega)$ one has \begin{equation}\label{Riccati-Reduced-0}
	\int_\Omega	|\nabla_gu|^{p} \diff v_g \geq  \int_\Omega W(\rho)|u|^p    \diff v_g.
\end{equation}
\end{theorem}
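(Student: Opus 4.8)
The plan is to reduce Theorem \ref{Riccati-theorem-0} to the additive form \eqref{additive-no-weight-0} of Theorem \ref{theorem-main-0} by a single judicious choice of the function $H$, and then to invoke the three defining properties of a Riccati pair to absorb the Laplacian term and reach the weighted bound \eqref{Riccati-Reduced-0}. First I would take $H(s)=|s|^p/p$ for $s\in\mathbb R$. This is $C^1$ with $H(0)=H'(0)=0$ (the latter because $p>1$), so it is admissible in Theorem \ref{theorem-main-0}. The arithmetic noted in the text produces the crucial cancellation: $pH(s)=|s|^p$ and, since $(p-1)p'=p$, also $|H'(s)|^{p'}=|s|^{(p-1)p'}=|s|^p$. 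Let $G$ be any function admissible for $(L,W)$, so that \ref{item:C_0-no-w-0}, \ref{it:r2} and \ref{it:r3} hold. Substituting this $H$ into \eqref{additive-no-weight-0} collapses both $pH(u)$ and $|H'(u)|^{p'}$ to $|u|^p$, giving, for every $u\in C_0^\infty(\Omega)$,
\[
\int_\Omega|\nabla_gu|^p\diff v_g\ \geq\ \int_\Omega\Big[G'(\rho)+G(\rho)\Delta_g\rho-(p-1)|G(\rho)|^{p'}\Big]|u|^p\diff v_g.
\]

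The core of the argument is to replace the curvature-carrying term $G(\rho)\Delta_g\rho$ by the ODE quantity $G(\rho)L(\rho)$, using \ref{it:r2}. Interpreting $\Delta_g\rho$ as a signed Radon measure, the distributional inequality $\Delta_g\rho\geq L(\rho)$ means the defect $\Delta_g\rho-L(\rho)\diff v_g$ is a nonnegative measure. I would split into the two cases built into \ref{it:r2}. If $\mathcal H^n(\{x\in\Omega:\Delta_g\rho(x)>L(\rho(x))\})\neq 0$, then \ref{it:r2} forces $G\geq 0$; since $|u|^p\geq 0$ as well, the weight $G(\rho)|u|^p$ is nonnegative, and testing the measure inequality against it yields $\int_\Omega G(\rho)\Delta_g\rho\,|u|^p\diff v_g\geq\int_\Omega G(\rho)L(\rho)|u|^p\diff v_g$. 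In the complementary case the defect measure vanishes, i.e.\ $\Delta_g\rho=L(\rho)$ a.e., and the same inequality holds with equality irrespective of the sign of $G$. In both situations one has, a.e.\ on $\Omega$,
\[
G'(\rho)+G(\rho)\Delta_g\rho-(p-1)|G(\rho)|^{p'}\ \geq\ G'(\rho)+L(\rho)G(\rho)-(p-1)|G(\rho)|^{p'}\ \geq\ W(\rho),
\]
where the final inequality is precisely the Riccati differential inequality \eqref{Riccati pair-0-0} from \ref{it:r3} evaluated at $t=\rho$. Combining this with the displayed lower bound gives \eqref{Riccati-Reduced-0}.

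The step I expect to be the main obstacle is the measure-theoretic justification of testing $\Delta_g\rho\geq L(\rho)$ against the merely Sobolev-regular weight $G(\rho)|u|^p$, rather than a smooth test function. This is exactly what the two-part condition \ref{it:r2} is engineered to control: it is the nonnegativity of $G$ (or the a.e.\ saturation $\Delta_g\rho=L(\rho)$) that guarantees the inequality points in the right direction once the distributional Laplacian is paired with a nonnegative, non-smooth weight. A careful proof would approximate $G(\rho)|u|^p$ from below by nonnegative smooth functions, using the local integrability recorded in \ref{item:C_0-no-w-0} together with $\rho\in W^{1,p}_{\rm loc}(\Omega)$ and $|\nabla_g\rho|=1$ to pass to the limit in the pairing; the remaining manipulations are the elementary algebra of conjugate exponents already isolated above.
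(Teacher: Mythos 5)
Your proof is correct and follows essentially the same route as the paper: the paper establishes the weighted version (Theorem \ref{W-theorem}) by choosing $H(s)=|s|^p/p$ in the additive form, then splits on \ref{it:wr2} exactly as you do with \ref{it:r2} (using nonnegativity of $G$ when the strict-inequality set has positive measure, and the a.e.\ saturation $\Delta_g\rho=L(\rho)$ otherwise) before invoking the Riccati inequality; your argument is this proof specialized to $w\equiv 1$. Your closing remark on justifying the pairing of the distributional inequality $\Delta_g\rho\geq L(\rho)$ with the nonnegative Sobolev weight $G(\rho)|u|^p$ by smooth approximation is a point the paper passes over silently, so if anything you are slightly more careful than the original.
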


Just to give a little peek to the efficiency of Theorem \ref{Riccati-theorem-0}, we sketch a short proof of the celebrated \textit{McKean's sharp spectral gap estimate} (for  details, see  Theorem \ref{McKean-theorem}):  if the sectional curvature is bounded from above by $\kappa< 0$ on an $n$-dimensional Cartan-Hadamard manifold\footnote{$(M,g)$ is a Cartan-Hadamard manifold if it is a complete, simply connected Riemannian manifold with nonpositive sectional curvature.} $(M,g)$, the essential spectrum of the $p$-Laplace-Beltrami operator on $(M,g)$ is $[K_{\kappa,n,p},\infty)$, where  
$K_{\kappa,n,p}=(\frac{n-1}{p}\sqrt{-\kappa})^p.$ 
Indeed, choose $\rho=d_g(x_0,\cdot)$ for some $x_0\in M$, where $d_g$ is the usual distance function on $(M,g)$, and let 
$$L\equiv(n-1)\sqrt{-\kappa},\quad W\equiv(n-1)\sqrt{-\kappa}c{-(p-1)}c^{p'} \quad\mbox{and}\quad G\equiv c,$$ for some $c>0$. The Laplace comparison, see Theorem \ref{comparison-theorem}/(I)/(i), implies $\Delta_g \rho\ge L$, hence \ref{it:r2} holds. We obtain that  $(L,W)$ is a $(p,\rho)$-Riccati pair in $(0,\infty)$, and $G$ is admissible for $(L,W)$. Since the maximum of $(n-1)\sqrt{-\kappa}c{-(p-1)}c^{p'}$ in $c>0$ is precisely $K_{\kappa,n,p}$, inequality  \eqref{Riccati-Reduced-0} immediately implies the required spectral estimate.

When the domain $\Omega$ is embedded into any of the model space forms (i.e.,  the sphere,  the Euclidean space or the hyperbolic space) and $\rho$ is a distance function from a fixed point of the domain, a closer inspection of the proof of  Theorem \ref{Riccati-theorem-0} (and its  weighted version, Theorem \ref{W-theorem})  shows that the \textit{sign restriction}  on the function $G$ from \eqref{Riccati pair-0-0} can be \textit{relaxed}. Indeed, in this particular situation, it turns out that there is equality in the Laplace comparison (i.e.  $\Delta_g \rho= L(\rho)$ for a suitable choice of $L$), thus certain estimates will be independent on the sign of $G$; for more details see Remark \ref{remark-sign}.

A natural question arises concerning the connection between Riccati and Bessel pairs. In the Euclidean case, the weighted Riccati pair (see Definition \ref{def:wRP}) is a slight extension  of the Bessel pair; indeed,  if we choose $L(t)=(n-1)/t$, require that $G\ge0$ and restrict \eqref{Riccati pair-0-0} to the equality case, the two notions coincide, see Proposition \ref{Bessel-Riccati}. When we move to the Riemannian context, there exists a natural extension of the usual notion of Bessel pairs (using appropriate comparison $L$) which still preserves this equivalence whenever the sectional curvature is nonpositive, compare Remark \ref{bessel-pair-generalization} and the paper by Flynn,  Lam,  Lu and Mazumdar \cite{Flynn-JGA}. However, in the generic case, Riccati pairs `inherit' the geometry of the ambient Riemannian manifold, reflected in the curvature; see e.g.\ the alternative proof of Cheng's eigenvalue result (Theorem \ref{Cheng-theorem}). In fact, the use of Riccati pairs is twofold. First, Riccati pairs  \emph{naturally appear} from our generic inequality \eqref{additive-no-weight-0}, thus no redundant change of variables is required. Second, Riccati pairs are \emph{flexible}  from technical point of view as both $L$ and $W$ can be easily adjusted to meet our needs. Such an example is provided by the  proof of McKean's sharp spectral gap estimate, where the choice of $L\equiv(n-1)\sqrt{-\kappa}$ significantly reduces the computations comparing to the expected form $L(t)=(n-1)\sqrt{-\kappa}\coth(\sqrt{-\kappa} t)$.
Beyond Proposition~\ref{Bessel-Riccati}, we point out the relationship between the method of supersolutions, also known as the \emph{Allegretto-Moss-Piepenbrink approach}, and the aforementioned notions, see Remark~\ref{rem:supersolution}. These relations allow us to incorporate in our method various elements of the \emph{criticality theory} of Hardy inequalities developed in the approach of supersolutions, see the foundational works by Pinchover~\cite{Pinchover1,Pinchover2}, and Devyver, Fraas and Pinchover~\cite{DFP}. We refer to Remark~\ref{rem:criticality}/(b) for an illustration.

In the sequel we roughly describe the content of the paper. In section \ref{section-preliminaries} those notions and results are recalled that will be used in our arguments, as basic elements from Riemannian geometry as well as volume and Laplace comparison principles. Section \ref{section-main-result} is devoted to the proof of our main abstract results: Theorems \ref{theorem-weighted-main} \& \ref{W-theorem}, that are the weighted versions of Theorems \ref{theorem-main-0} \& \ref{Riccati-theorem-0}, respectively. In addition, the relationship between Bessel and Riccati pairs is also discussed, see Proposition \ref{Bessel-Riccati} and Remark \ref{bessel-pair-generalization}.

Most of the additive functional inequalities in section \ref{section-Hardy} are formally well known from the Euclidean setting (and some of them also in Riemannian manifolds). However, we shall focus on their proof on
Riemannian manifolds  -- mainly on Cartan-Hadamard manifolds -- by showing the efficiency of our main results, mostly based on Riccati pairs.  
More precisely, we shall consider the following inequalities:
\begin{itemize}
	\item In \S \ref{Caccippoli-subsection} we prove $L^p$-\textit{Caccioppoli-type inequalities} on Riemannian manifolds, providing alternative, short proofs for the results obtained by D'Ambrosio and Dipierro \cite{DDipierro} (see Theorems \ref{theorem-Hardy-1} \& \ref{theorem-Cacc-improv}). Some new improvements are also established in the case  $p\in (1,2].$
	\item In \S \ref{Hardy-subsection} various \textit{Hardy-type  inequalities and their improvements} are discussed on \textit{Cartan-Hadamard manifolds}, including results by Carron \cite{Carron} and  Kombe and \"Ozaydin \cite{KO-2009, KO-2013} (see Theorem \ref{them-hardy-1}),  Edmunds and Triebel \cite{EdmundsTriebel} (see Theorem \ref{them-elso}), Adimurthi,  Chaudhuri and  Ramaswamy \cite{ACR} (see Theorem \ref{adimurth-theorem}), and
	Brezis and V\'azquez \cite{BV} (see Theorem \ref{Brezis-Vazquez-theorem}).
	\item  In \S \ref{Spectral-estimate-subsection} \textit{spectral  estimates on Riemannian manifolds} are presented. First, an alternative proof of the celebrated Cheng's comparison result is provided on Riemannian manifolds with sectional curvature bounded from above (see Theorem \ref{Cheng-theorem}), then the well known Faber-Krahn inequality and McKean spectral gap estimates (see Theorems \ref{FK-theorem} \& \ref{McKean-theorem}) are considered. In addition, we give a short proof of the main spectral result of Carvalho and  Cavalcante \cite{Car-Cav-JMAA} (see Theorem \ref{Car-Cav-theorem}).
	\item In \S \ref{interpolation-mckean-hardy} an \textit{interpolation inequality} is established connecting the Hardy inequality and McKean's spectral gap estimate on \textit{Cartan-Hadamard manifolds}, in the spirit of Berchio,  Ganguly, Grillo and Pinchover \cite{Berchio-Royal} (see Theorem \ref{interpolation-theorem}). A simple modification of the latter argument also provides a short, alternative proof of the inequality by Akutagawa and Kumura \cite{Japanok}  (see Theorem \ref{japanok-cikke-alapjan}).
	\item In \S \ref{gh-morad-subsection} two parameter-dependent  \textit{Ghoussoub-Moradifam-type weighted inequalities} are considered  in the Euclidean case (cf.\ \cite{GM-Annalen}), where the weights are of non-singular type (see Theorem \ref{Gh-Moradifam-theorem}). For a certain parameter range, these inequalities are also extended to Cartan-Hadamard manifolds (see Theorem \ref{0-Gh-Moradifam-theorem-0}).
\end{itemize}
In section \ref{section-uncertainty} we prove multiplicative Hardy-type inequalities, as simple consequences of Theorem \ref{theorem-main-0}/(ii) and its weighted version, Theorem \ref{theorem-weighted-main}/(ii); namely:
\begin{itemize}
	\item In \S \ref{subsection-uncertainty-1} a  sharp parameter-dependent \textit{uncertainty principle} is established on \textit{Cartan-Hadamard manifolds}, which implies both the Heisenberg-Pauli-Weyl and Hydrogen uncertainty principles  (see Theorem \ref{Uncertainty-theorem}). In addition,  we prove a rigidity result: if the quantitative uncertainty principle holds on an $n$-dimensional Cartan-Hadamard manifolds with Ricci curvature bounded from below then the manifold is isometric the corresponding model space form (see Theorem \ref{Uncertainty-theorem-rigid}).
	\item In \S \ref{CKN-subsection} two sharp \textit{Caffarelli-Kohn-Nirenberg inequalities} are presented on \textit{Cartan-Hadamard manifolds} (see Theorems \ref{CKN-th} \& \ref{thm-final}).
\end{itemize}

It is well known that Bessel pairs can be efficiently used to prove various \emph{Hardy-Rellich-type inequalities}, see Ghoussoub and Moradifam \cite{GM-Annalen} and Berchio, Ganguly, Roychowdhury \cite{Berchio-CV}. We are wondering if such inequalities of  higher-order can be elegantly achieved by Riccati pairs, both in Euclidean spaces and Riemannian manifolds. This question will be addressed in a forthcoming paper.

We also notice that our arguments can be easily extended to reversible Finsler manifolds; in the irreversible case, there are certain technical issues which require the presence of the reversibility constant.  The details are left to the interested reader. 

	\section{Preliminaries}\label{section-preliminaries}
	
In this section we recall those notions and results that are needed for presenting our results. We mainly follow Gallot,  Hulin and  Lafontaine \cite{GHL}, Hebey \cite{Hebey}, and Krist\'aly \cite{Kristaly-JMPA}.

Let $(M,g)$ be an $n$-dimensional complete  Riemannian manifold, with $n\geq 2$,  $d_g\colon M\times M \to [0,\infty)$ and $\diff v_g$ denote the distance function and the canonical measure induced by the metric $g$ on $M$, respectively.
Denote $B_{x_0}(R)=\{y\in M:d_g(x_0,y)<R\}$ the open metric ball with center $x_0\in M$ and radius $R>0.$
The volume of a bounded open set $S\subset M$ is
$$\vol_g(S)=\int_S {\text d}v_g=\mathcal H^n(S).$$
In particular, one has for every $x_0\in M$ that
\begin{equation}\label{volume-comp-nullaban}
	\lim_{R\to 0^+}\frac{\vol_g(B_{x_0}(R))}{\omega_n
		R^n}=1,
\end{equation}
where $\omega_n=\pi^\frac{n}{2}/\Gamma(1+n/2)$ stands for the volume of the $n$-dimensional
Euclidean unit ball, see Gallot,  Hulin and  Lafontaine \cite[Theorem 3.98]{GHL}.

Let $p>1$ and fix $u\in W^{1,p}(M)$. The gradient of $u$ is $\nabla_g u$, whose local components are $$u^i=g^{ij}\frac{\partial u}{\partial x^j},$$ where $g^{ij}$ are
the local components of $g^{-1}=(g_{ij})^{-1}$ in the local
coordinate system  $(x^i)$ on a coordinate neighborhood of $x\in M$; hereafter the standard summation convention is used.
The $p$-Laplace-Beltrami operator on $(M,g)$ is given by $$\Delta_{g,p} u={\rm
	div}_g(|\nabla_gu|^{p-2}\nabla_gu),$$ see e.g.\ Hebey \cite{Hebey}.  Observe that $\Delta_{g,2}$ is nothing but $\Delta_{g}$, the usual Laplace-Beltrami operator  on $(M,g)$. If $w\in C_0^2(M)$, one has the following integration by parts formula
\begin{equation}
	\int_M w \Delta_{g,p} u {\text d}v_g=-\int_M  |\nabla_gu|^{p-2} \nabla_g u \nabla_g w {\text d}v_g,
\end{equation}	
where $\nabla_gu,\nabla_gw\in TM,$ and $\nabla_g u \nabla_g w$ is understood as the scalar product with respect to the metric $g$.


The model space form ${\bf M}_\kappa^n$ is an $n$-dimensional manifold with constant sectional curvature~$\kappa$, that is 
\[
	{\bf M}_\kappa^n = \begin{cases}
		\mathbb{S}_\kappa^n\mbox{ -- the $\kappa$-sphere}, & \mbox{if } \kappa>0,\\
		\mathbb{R}^n\mbox{ -- the Euclidean space}, & \mbox{if }\kappa = 0,\\
		\mathbb H^n_{\kappa}\mbox { -- the $\kappa$-hyperbolic space}, & \mbox{if } \kappa<0.
	\end{cases}
\]
For $\kappa\in \mathbb R$, we introduce the functions
$${\bf ct}_\kappa(t)=\begin{cases}
	\sqrt{\kappa}\cot(\sqrt{\kappa} t), &\mbox{if }\kappa>0,\\
	\frac{1}{t},& \mbox{if } \kappa=0,  \\
	\sqrt{-\kappa}\coth(\sqrt{-\kappa} t), & \mbox{if } \kappa<0,\\
\end{cases}\quad\mbox{and}\quad{\bf s}_\kappa(t)=\begin{cases}
	\frac{\sin(\sqrt{\kappa} t)}{\sqrt{\kappa}}, &\mbox{if }\kappa>0,\\
	t,& \mbox{if } \kappa=0,  \\
	\frac{\sinh(\sqrt{-\kappa} t)}{\sqrt{-\kappa}}, & \mbox{if } \kappa<0,
\end{cases}$$
defined on $(0,\frac{\pi}{\sqrt{\kappa}})$ using the convention $\frac{\pi}{\sqrt{\kappa}}:=\infty$, when $\kappa\le 0$. For $\kappa\leq 0$ define
${\bf D}_\kappa\colon[0,\infty)\to [0,\infty)$  by
\begin{equation}\label{D-function}
	{\bf D}_\kappa(0)=0\quad\mbox{and}\quad{\bf D}_\kappa(t)=t {\bf ct}_\kappa(t)-1,\quad\forall t>0.
\end{equation}
For further use, let
$$V_\kappa(R)=n\omega_n\int_0^R {\bf s}^{n-1}_\kappa(t)\diff t$$
be the volume of the ball of radius $R>0$ in the model space form ${\bf M}_\kappa^n$.

	Let $d_{x_0}(x)=d_g(x_0,x)$ be  the distance  from 	a given point $x_0\in M$. The eikonal equation reads as
	\begin{equation}\label{eikonal}
		|\nabla_g d_{x_0}|=1\ \ \diff v_g-{\rm a.e.\ on}\ M.
	\end{equation}

Let $\operatorname{inj}_{x_0}$ be the injectivity radius of $x_0\in M$.  We have the following Laplace and Bishop-Gromov volume comparison principles, see e.g.\ Gallot,  Hulin and  Lafontaine \cite[Theorem 3.101]{GHL}.

\begin{theorem}\label{comparison-theorem} Let $(M,g)$ be an $n$-dimensional complete  Riemannian manifold with $n\geq 2$, and  $x_0\in M$  be a fixed point. The following statements hold with the usual convention $(\pi/\sqrt{\kappa}=\infty$, when $\kappa\le 0)$.
	\begin{itemize}
		\item[{\rm (I)}] If the sectional curvature is bounded from above as ${\bf K}\leq  \kappa $ for some $\kappa\in \mathbb R$, then$:$
		\begin{itemize}
			\item[{\rm (i)}] {\rm (Laplace comparison)} $\Delta_g d_{x_0}\geq (n-1){\bf ct}_\kappa(d_{x_0})$ with $d_{x_0}<\pi/\sqrt{\kappa};$
			\item[{\rm (ii)}] {\rm (Volume comparison)} $\vol_g(B_{x_0}(R))\geq V_\kappa(R)$ for every $R\in(0,\min(\operatorname{inj}_{x_0},\pi/\sqrt{\kappa}))$. 
		\end{itemize}

		\item[{\rm (II)}] If the Ricci curvature is bounded from below as ${\bf Ric}\geq   \kappa (n-1)g $ for some $\kappa\in \mathbb R$, then$:$
		\begin{itemize}
			\item[{\rm (i)}] {\rm (Laplace comparison)} $\Delta_g d_{x_0}\leq (n-1){\bf ct}_\kappa(d_{x_0})$ with $d_{x_0}<\pi/\sqrt{\kappa};$
			\item[{\rm (ii)}] {\rm (Volume comparison)} $\vol_g(B_{x_0}(R))\leq V_\kappa(R)$ for every $R>0.$ 
		\end{itemize}
	\end{itemize}
	If equality holds in any of the above statements,  $(M,g)$ is isometric to the model space form ${\bf M}_\kappa^n$.
\end{theorem}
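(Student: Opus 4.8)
The plan is to pass to geodesic polar coordinates based at $x_0$ and to study the distance function $r=d_{x_0}$ through the Riccati equation satisfied by the shape operator of the geodesic spheres. Along a unit-speed geodesic issuing from $x_0$ one writes $g=\diff r^2+g_r$ on the level sets of $r$, and recalls that the Laplacian of the distance function equals the mean curvature of these level spheres, $\Delta_g r=\operatorname{tr}A(r)$, where $A(r)$ is the shape operator of $\{r=\mathrm{const}\}$ with respect to the outward unit normal $\nabla_g r$. The operator $A(r)$ satisfies the matrix Riccati equation $A'(r)+A(r)^2+R(r)=0$, with $R(r)=R(\cdot,\partial_r)\partial_r$ the radial curvature operator, together with the universal asymptotics $A(r)=\frac1r\,\mathrm{Id}+O(r)$ as $r\to 0^+$; everything below is an analysis of this single equation.

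For the sectional-curvature branch (I), the hypothesis ${\bf K}\le\kappa$ reads $R(r)\le\kappa\,\mathrm{Id}$ on $\partial_r^\perp$, and the standard matrix Riccati (equivalently, Jacobi field) comparison applied to $A'=-A^2-R$ yields $A(r)\ge{\bf ct}_\kappa(r)\,\mathrm{Id}$ for $r<\pi/\sqrt\kappa$; tracing gives $\Delta_g r=\operatorname{tr}A(r)\ge(n-1){\bf ct}_\kappa(r)$, which is (I)(i). For the Ricci branch (II), I would instead trace the Riccati equation first and invoke Cauchy-Schwarz, $\operatorname{tr}(A^2)\ge(\operatorname{tr}A)^2/(n-1)$, to reach the scalar differential inequality $(\operatorname{tr}A)'\le-\frac{(\operatorname{tr}A)^2}{n-1}-{\bf Ric}(\partial_r,\partial_r)\le-\frac{(\operatorname{tr}A)^2}{n-1}-(n-1)\kappa$. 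Since $\psi=(n-1){\bf ct}_\kappa$ solves the corresponding scalar Riccati equation with equality and has the same singular behaviour $\psi\sim(n-1)/r$ at the origin, a scalar Sturm comparison forces $\operatorname{tr}A(r)\le(n-1){\bf ct}_\kappa(r)$, namely (II)(i).

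Both volume statements then follow by integrating the Laplace bounds. Writing the canonical volume form in polar coordinates as $\diff v_g=J(r,\theta)\,\diff r\,\diff\theta$ with $\partial_r\log J=\Delta_g r$, each Laplace inequality integrates---using the normalization $J(r,\theta)/r^{n-1}\to 1$ as $r\to0^+$---to a comparison of $J$ against the model density ${\bf s}_\kappa^{n-1}(r)$, with the same direction as the corresponding Laplace bound. Integrating over the unit sphere in $\theta$ and then in $r$ up to $R$ produces the claimed inequalities for $\vol_g(B_{x_0}(R))$ against $V_\kappa(R)$. The restriction $R<\min(\operatorname{inj}_{x_0},\pi/\sqrt\kappa)$ in (I)(ii) is precisely what keeps us inside the region where the exponential map is a diffeomorphism and $J$ stays positive.

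The step I expect to be most delicate is the Ricci volume comparison (II)(ii), which is asserted for \emph{all} $R$ and hence must survive the cut locus of $x_0$. The remedy is classical: the pointwise Laplace inequality need only hold in the barrier (distributional) sense, the cut locus has measure zero, and one extends $J$ by zero beyond the cut point, so that the one-sided bound still integrates to the desired volume inequality; this is the core of the Bishop-Gromov argument. Finally, for the rigidity assertion I would retrace the equality case: equality in any of the four comparisons forces equality in the Cauchy-Schwarz step, so $A(r)$ is a scalar multiple of the identity, and saturates the curvature hypothesis along every radial geodesic. The Riccati equation then pins down $A(r)={\bf ct}_\kappa(r)\,\mathrm{Id}$ and $R(r)=\kappa\,\mathrm{Id}$ identically, i.e. all radial sectional curvatures equal $\kappa$; propagating this through the Jacobi equation shows that $g$ has constant curvature $\kappa$ in normal coordinates, yielding the isometry with ${\bf M}_\kappa^n$.
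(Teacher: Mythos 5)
The paper offers no proof of this statement at all---it is a classical result quoted directly from Gallot--Hulin--Lafontaine \cite[Theorem 3.101]{GHL}---so the only meaningful comparison is with that standard reference, whose proof is exactly the argument you sketch. Your outline (matrix Riccati comparison $A'=-A^2-R$ for the sectional bound, traced Riccati plus Cauchy--Schwarz $\operatorname{tr}(A^2)\geq(\operatorname{tr}A)^2/(n-1)$ for the Ricci bound, integration of $\partial_r\log J=\Delta_g r$ with $J\sim r^{n-1}$ for both volume statements, extension of $J$ by zero past the cut locus in the Bishop--Gromov case, and the equality analysis for rigidity) is the correct classical proof, so there is essentially the same approach here and nothing to fault beyond the usual textbook-level glossing of details (e.g.\ that the lower Laplace bound in (I)(i) is pointwise only inside the cut locus, and that global rigidity uses simple connectedness).
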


%

	\section{Proof of main results}\label{section-main-result}
	

In this section we prove the main abstract results in weighted form.
\begin{theorem}\label{theorem-weighted-main}
	Let $(M,g)$ be a complete, non-compact $n$-dimensional Riemannian manifold, with  $n\geq 2$. Let  $\Omega\subseteq M$ be a domain, $p>1$, and $\rho\in W_{\rm loc}^{1,p}(\Omega)$ be nonconstant and positive with $\mathcal H^n(\rho^{-1}(\sup_\Omega \rho))=0$.  
	Let  $w\colon(0,\sup_\Omega\rho)\to (0,\infty)$, $G\colon(0,\sup_\Omega\rho)\to \mathbb R$ and $H\colon \mathbb{R}\to\mathbb{R}$  be  $C^1$  functions such that
	\begin{enumerate}[label=\rm({\bf G})$_{\rho,w}$]
		\item\label{item:C_0} {\rm :} $G(\rho)  w(\rho)^\frac{1}{p'} |\nabla_g\rho|^{p-1}\in L_{\rm loc}^{p'}(\Omega)$ and $G'(\rho) w(\rho) |\nabla_g\rho|^{p},
		G(\rho) w'(\rho)|\nabla_g\rho|^{p}, w(\rho)\in L_{\rm loc}^{1}(\Omega),$
	\end{enumerate}
	and $H(0)=H'(0)=0$. The following inequalities hold.
	\begin{enumerate}
		\item[{\rm (i)}] {\rm (Additive form)} For every  $u\in C_{0}^\infty(\Omega)$ one has
		\begin{align}
			\int_\Omega	w(\rho) |\nabla_gu|^{p} \diff v_g &\geq p \int_\Omega  \left[ (G'(\rho) w(\rho)+ G(\rho) w'(\rho))  |\nabla_g\rho|^{p} +  G(\rho) w(\rho)\Delta_{g,p}\rho\right]H(u)\diff v_g \nonumber\\&\qquad{-(p-1)}\int_\Omega |G(\rho)|^{p'}|\nabla_g\rho|^{p} w(\rho) |H'(u)|^{p'}\diff v_g.\label{additive}
		\end{align}
		\item[{\rm (ii)}] {\rm (Multiplicative form)} For every  $u\in C_{0}^\infty(\Omega)$ one has
		\begin{equation}\label{multiplicative}
			\int_\Omega	w(\rho)|\nabla_gu|^{p}  \diff v_g \geq  \frac{\left|\displaystyle\int_\Omega  \left[ (G'(\rho) w(\rho)+ G(\rho) w'(\rho))  |\nabla_g\rho|^{p} +  G(\rho) w(\rho) \Delta_{g,p}\rho\right]H(u)\diff v_g\right|^p}{\left(\displaystyle\int_\Omega |G(\rho)|^{p'}|\nabla_g\rho|^{p} w(\rho) |H'(u)|^{p'}\diff v_g\right)^{p-1}},
		\end{equation}
		provided that there exists a neighborhood $U\subset \mathbb R$ of zero such that 
		\begin{equation}\label{assummptions}
			H'(s)\neq 0,\ \forall s\in U\setminus \{0\},\quad G(t)\neq 0,\ \forall t\in (0,\sup_\Omega\rho)\quad\mbox{and}\quad\mathcal H^n(|\nabla_g\rho|^{-1}(0))=0.
		\end{equation}
	\end{enumerate}
\end{theorem}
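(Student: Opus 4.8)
The plan is to reduce both inequalities to a single integration-by-parts identity followed by Young's inequality, exactly as suggested by the remark that $pH(s)$ and $|H'(s)|^{p'}$ play dual roles. The key object is the vector field
$$X := G(\rho)\,w(\rho)\,|\nabla_g\rho|^{p-2}\nabla_g\rho,$$
whose distributional divergence, by the product rule and the very definition $\Delta_{g,p}\rho=\di_g(|\nabla_g\rho|^{p-2}\nabla_g\rho)$, is
$$\di_g X = \bigl(G'(\rho)w(\rho)+G(\rho)w'(\rho)\bigr)|\nabla_g\rho|^{p} + G(\rho)w(\rho)\,\Delta_{g,p}\rho.$$
This is precisely the coefficient multiplying $H(u)$ in \eqref{additive} and \eqref{multiplicative}, which explains the otherwise mysterious shape of that term; accordingly I read the first integral on the right-hand side of (i)--(ii) as the pairing of the distribution $\di_g X$ against $H(u)$. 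Condition \ref{item:C_0} is engineered so that $X\in L^1_{\rm loc}$ and each integral below is finite; in particular $|G(\rho)|^{p'}|\nabla_g\rho|^{p}w(\rho)\in L^1_{\rm loc}$ and $w(\rho)\in L^1_{\rm loc}$, while the hypothesis $\mathcal H^n(\rho^{-1}(\sup_\Omega\rho))=0$ together with $\rho>0$ forces $\rho\in(0,\sup_\Omega\rho)$ a.e., so that $G(\rho),w(\rho)$ are defined almost everywhere.

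Next, since $u\in C_0^\infty(\Omega)$ and $H\in C^1$ with $H(0)=0$, the composition $H(u)$ lies in $C_0^1(\Omega)$ and is an admissible test function. Integrating by parts yields
$$\int_\Omega [\di_g X]\,H(u)\diff v_g = -\int_\Omega H'(u)\,G(\rho)w(\rho)|\nabla_g\rho|^{p-2}\langle\nabla_g\rho,\nabla_g u\rangle\diff v_g.$$
I would then estimate the right-hand side using $|\langle\nabla_g\rho,\nabla_g u\rangle|\le|\nabla_g\rho|\,|\nabla_g u|$ and the pointwise Young inequality $p\,ab\le a^p+(p-1)b^{p'}$ applied to $a=w(\rho)^{1/p}|\nabla_g u|$ and $b=|G(\rho)|w(\rho)^{1/p'}|\nabla_g\rho|^{p-1}|H'(u)|$; since $(p-1)p'=p$ one has $b^{p'}=|G(\rho)|^{p'}w(\rho)|\nabla_g\rho|^{p}|H'(u)|^{p'}$, so multiplying the identity above by $p$ and rearranging gives exactly the additive form \eqref{additive}.

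For the multiplicative form I would exploit the homogeneity of \eqref{additive} in $G$: replacing $G$ by $\lambda G$ for arbitrary $\lambda\in\mathbb R$ (which preserves \ref{item:C_0}) scales $\di_g X$ by $\lambda$ and the last integral by $|\lambda|^{p'}$, so that with $I:=\int_\Omega[\di_g X]H(u)\diff v_g$ and $J:=\int_\Omega |G(\rho)|^{p'}|\nabla_g\rho|^{p}w(\rho)|H'(u)|^{p'}\diff v_g$ one obtains
$$\int_\Omega w(\rho)|\nabla_g u|^{p}\diff v_g \ge p\lambda I-(p-1)|\lambda|^{p'}J,\qquad \forall\lambda\in\mathbb R.$$
Maximizing the right-hand side over $\lambda$ (the optimum sits at $|\lambda|^{p'-1}=|I|/J$, using $p'-1=1/(p-1)$) returns the value $|I|^p/J^{p-1}$, which is \eqref{multiplicative}. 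The assumptions \eqref{assummptions}, namely $G(t)\neq0$, $H'(s)\neq0$ near $0$, and $\mathcal H^n(|\nabla_g\rho|^{-1}(0))=0$, guarantee $J>0$ for $u\not\equiv0$ (by continuity $u$ assumes values in $U\setminus\{0\}$ on a set of positive measure), so the quotient is well-defined.

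I expect the main obstacle to be the rigorous justification of the divergence computation and the integration by parts under the low regularity $\rho\in W^{1,p}_{\rm loc}(\Omega)$, where $\Delta_{g,p}\rho$ is merely a distribution: one must check that the product rule for $\di_g X$ holds distributionally, that $X\in L^1_{\rm loc}$ so the pairing with $H(u)\in C_0^1(\Omega)$ is legitimate, and that each separate integral is finite. All of this is exactly what condition \ref{item:C_0} is designed to provide, so the analytic content of the proof is concentrated in verifying these integrability hypotheses rather than in the algebra, which is routine.
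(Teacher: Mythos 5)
Your proposal is correct and follows essentially the same route as the paper: both hinge on the vector field $G(\rho)w(\rho)|\nabla_g\rho|^{p-2}\nabla_g\rho$, the distributional product rule identifying its divergence with the bracketed coefficient, integration by parts against $H(u)\in C_0^1(\Omega)$, and a pointwise Young-type estimate (the paper phrases it as the convexity inequality $|\xi|^p\ge p|\eta|^{p-2}\xi\eta-(p-1)|\eta|^p$ applied before the integration by parts, which is the same inequality you invoke after it). For part (ii) you scale $G\mapsto\lambda G$ where the paper scales $H\mapsto cH$, but since both substitutions scale $I$ linearly and $J$ by $|\cdot|^{p'}$, the optimization is identical, and your justification that $J>0$ under \eqref{assummptions} (via the intermediate value theorem forcing $u$ to take small nonzero values in $U$ on a set of positive measure) is in fact more detailed than the paper's.
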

\begin{proof} (i) By the convexity of $\xi\mapsto|\xi|^{p}$, it turns out that
$$|\xi|^{p}\geq |\eta|^{p} + p  |\eta|^{p-2} (\xi-\eta)\eta=p|\eta|^{p-2}\xi\eta{-(p-1)}|\eta|^{p},\quad \forall \xi,\eta\in T^*\Omega. $$ Fix $u\in C_{0}^\infty(\Omega)$ arbitrarily and let $\xi:=\nabla_gu$ and $\eta:=v\nabla_g\rho$, where  
$$v=-|G(\rho)|^\frac{2-p}{p-1} |H'(u)|^\frac{2-p}{p-1}  G(\rho)  H'(u).$$
On the one hand, we have 
$$|\eta|^{p}=|G(\rho)|^\frac{p}{p-1} |\nabla_g\rho|^p |H'(u)|^\frac{p}{p-1}=|G(\rho)|^{p'} |\nabla_g\rho|^p |H'(u)|^{p'}.$$
On the other hand, the chain rule $\nabla_g H(u)=H'(u) \nabla_gu$ implies that
$$|\eta|^{p-2} \xi\eta=-G(\rho) |\nabla_g\rho|^{p-2}  \nabla_g\rho\nabla_g H(u).$$
Therefore, one has
$$
	|\nabla_gu|^{p}\ge- p G(\rho) |\nabla_g\rho|^{p-2}  \nabla_g\rho\nabla_g H(u){-(p-1)}|G(\rho)|^{p'}|\nabla_g\rho|^{p} |H'(u)|^{p'} .
$$
Multiplying by $w(\rho)>0$ and using \ref{item:C_0} we get
\begin{align}
	\int_\Omega	w(\rho)|\nabla_gu|^{p}  \diff v_g&\ge -p\int_\Omega	G(\rho) w(\rho) |\nabla_g\rho|^{p-2}  \nabla_g\rho\nabla_g H(u) \diff v_g\nonumber\\
	&\qquad{-(p-1)}\int_\Omega w(\rho) |G(\rho)|^{p'}|\nabla_g\rho|^{p} |H'(u)|^{p'} \diff v_g:=p I_H {-(p-1)} J_H.\label{eq:ihjh}
\end{align}
An integration by parts and the boundary conditions on $H$ yield
\begin{align*}
	I_H&=\int_\Omega   {\rm div}_g\left( G(\rho) w(\rho) |\nabla_g\rho|^{p-2}\nabla_g\rho\right)H(u)\diff v_g\\
	&=\int_\Omega  \left[ (G'(\rho) w(\rho)+ G(\rho) w'(\rho))  |\nabla_g\rho|^{p} +  G(\rho) w(\rho) \Delta_{g,p}\rho\right]H(u)\diff v_g.
\end{align*}
The above arguments imply inequality \eqref{additive}.

(ii) Recall the definitions of $I_H$ and $J_H$ from \eqref{eq:ihjh}; due to assumption \eqref{assummptions} we have that $J_H>0.$ Since \eqref{additive} is valid for $cH$ instead of $H$ for every $c\in \mathbb R$, it follows that
$$\int_\Omega w(\rho)|\nabla_gu|^{p}  \diff v_g\geq pI_{cH}{-(p-1)}J_{cH}=pcI_{H}{-(p-1)}|c|^{p'}J_{H}.$$ Note that ${p-1>0}$, thus we can maximize the right hand side of the latter expression with respect to $c$; the maximum is achieved for the value $c=J_H^{1-p}|I_H|^{p-2}I_H$, obtaining that
$$\int_\Omega	(w(\rho))|\nabla_gu|^{p}  \diff v_g\geq \frac{|I_H|^p}{J_H^{p-1}},$$
which is  inequality \eqref{multiplicative}.
\end{proof}

\begin{remark}\label{rem:equivalence}
	We notice that the additive and multiplicative forms can be deduced from each other. First,  relation \eqref{multiplicative} follows from \eqref{additive}, see  (ii). Second, we have that $$J_H^{1-p}|I_H|^p\geq pI_H{-(p-1)}J_H;$$ indeed, when $I_H\leq 0$, the latter inequality is trivial, while for $I_H> 0,$ 	
	  Young's inequality applies. 
\end{remark}



Clearly, Theorem \ref{theorem-main-0} is a simple consequence of Theorem \ref{theorem-weighted-main} by choosing $w\equiv 1$ and the function $\rho$ such that $|\nabla_g \rho|=1$  $\diff v_g$-a.e.\ in $\Omega$. In particular, \ref{item:C_0} reduces to  \ref{item:C_0-no-w-0}. In the sequel, we prove a weighted version of Theorem \ref{Riccati-theorem-0}, which requires the weighted form of the $(p,\rho)$-Riccati pairs.
\begin{definition}\label{def:wRP}
	Let $(M,g)$ be a complete, non-compact $n$-dimensional Riemannian manifold, with  $n\geq 2$. Let  $\Omega\subseteq M$ be a domain, $p>1$, and $\rho\in W_{\rm loc}^{1,p}(\Omega)$ be a positive function with $|\nabla_g\rho|=1$  $\diff v_g$-a.e.\ in $\Omega$.  Let us fix the continuous functions $L,W\colon(0,\sup_\Omega\rho)\to (0,\infty)$ and the function $w\colon(0,\sup_\Omega\rho)\to (0,\infty)$ of class $C^1$.
	We say that the couple $(L,W)$ is a $(p,\rho,w)$\textit{-Riccati pair in} $(0,\sup_\Omega\rho)$ if there exists a  function $G\colon(0,\sup_\Omega\rho)\to \mathbb R$  such that 	
	\begin{enumerate}[label=\bf(R\arabic*)]
		\item\label{it:wr1} \ref{item:C_0} holds (from Theorem \ref{theorem-weighted-main});
		\item\label{it:wr2} $\Delta_g \rho\geq L(\rho)$ in the distributional sense in $\Omega,$ and $G\geq 0$ if $\mathcal H^n(\{x\in \Omega:\Delta_g \rho(x) > L(\rho(x))\})\neq 0.$
		\item\label{it:wr3} for every $t\in (0,\sup_\Omega\rho)$ one has
		\begin{equation}\label{Riccati pair-weighted}
			G'(t)+\left(\frac{w'(t)}{w(t)}+L(t)\right)G(t){-(p-1)}|G(t)|^{p'}\geq  W(t).
		\end{equation}
	\end{enumerate}
	A function $G$ satisfying the above conditions is  said to be $w$-\emph{admissible} for $(L,W)$.
\end{definition}

%
%
In the unweighted case, i.e.\ $w\equiv 1$, the notion
of $(p,\rho,w)$-Riccati pair in $(0,\sup_\Omega\rho)$ reduces to the $(p,\rho)$-Riccati pair in $(0,\sup_\Omega\rho)$.
A consequence of Theorem \ref{theorem-weighted-main} reads as follows, which is a weighted form of Theorem \ref{Riccati-theorem-0}:

\begin{theorem}\label{W-theorem}
	Let $(M,g)$ be a complete, non-compact $n$-dimensional Riemannian manifold, with  $n\geq 2$. Let  $\Omega\subseteq M$ be a domain, $p>1$, and $\rho\in W_{\rm loc}^{1,p}(\Omega)$ be a positive function with $|\nabla_g\rho|=1$  $\diff v_g$-a.e.\ in $\Omega$.
	Let $L,W\colon(0,\sup_\Omega\rho)\to (0,\infty)$ be continuous functions and $w\colon(0,\sup_\Omega\rho)\to (0,\infty)$ be of class $C^1$ such that $(L,W)$ is a $(p,\rho,w)$-Riccati pair in $(0,\sup_\Omega\rho)$.
	Then for every
	$u\in C_0^{\infty}(\Omega)$ one has \begin{equation}\label{W-Reduced}
		\int_\Omega	w(\rho)|\nabla_gu|^{p} \diff v_g \geq  \int_\Omega W(\rho)w(\rho)|u|^p \,   \diff v_g.
	\end{equation}
\end{theorem}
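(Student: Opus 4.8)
The plan is to obtain \eqref{W-Reduced} directly from the additive form \eqref{additive} of Theorem \ref{theorem-weighted-main} by inserting one canonical choice of $H$ and then reading off the three defining properties of a Riccati pair in sequence. First I would apply Theorem \ref{theorem-weighted-main}/(i) with $H(s)=|s|^p/p$ and with $G$ a $w$-admissible function for $(L,W)$, which exists by hypothesis. This $H$ is $C^1$ for $p>1$, satisfies $H(0)=H'(0)=0$, and yields the two algebraic identities $pH(s)=|s|^p$ and $|H'(s)|^{p'}=\big||s|^{p-2}s\big|^{p'}=|s|^{(p-1)p'}=|s|^p$, since $(p-1)p'=p$. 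The integrability requirement \ref{item:C_0} of Theorem \ref{theorem-weighted-main} is precisely \ref{it:wr1}. Because $|\nabla_g\rho|=1$ $\diff v_g$-a.e.\ in $\Omega$, every factor $|\nabla_g\rho|^p$ collapses to $1$ and $\Delta_{g,p}\rho=\operatorname{div}_g(|\nabla_g\rho|^{p-2}\nabla_g\rho)=\operatorname{div}_g(\nabla_g\rho)=\Delta_g\rho$ in the distributional sense, the two vector fields agreeing $\diff v_g$-a.e. (I would also note that the standing hypothesis $\mathcal H^n(\rho^{-1}(\sup_\Omega\rho))=0$ of Theorem \ref{theorem-weighted-main} is automatic here: on a positive-measure level set $\{\rho=\sup_\Omega\rho\}$ the gradient would vanish a.e., contradicting $|\nabla_g\rho|=1$.) With these simplifications \eqref{additive} becomes
\begin{equation*}
\int_\Omega w(\rho)|\nabla_g u|^p\diff v_g \geq \int_\Omega\big[G'(\rho)w(\rho)+G(\rho)w'(\rho)+G(\rho)w(\rho)\Delta_g\rho\big]|u|^p\diff v_g -(p-1)\int_\Omega |G(\rho)|^{p'}w(\rho)|u|^p\diff v_g.
\end{equation*}

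Second, I would invoke \ref{it:wr2} to replace $\Delta_g\rho$ by $L(\rho)$. Since $w(\rho)|u|^p\geq 0$, the pointwise bound $G(\rho)w(\rho)\Delta_g\rho\geq G(\rho)w(\rho)L(\rho)$ follows once $G(\rho)\big(\Delta_g\rho-L(\rho)\big)\geq 0$, and this splits into the two cases covered by \ref{it:wr2}: if $\mathcal H^n(\{\Delta_g\rho> L(\rho)\})=0$ then $\Delta_g\rho=L(\rho)$ $\diff v_g$-a.e.\ and the substitution is an equality regardless of the sign of $G$; otherwise $G\geq 0$ is imposed, and together with $\Delta_g\rho\geq L(\rho)$ the product is nonnegative. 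Hence the $\Delta_g\rho$-integral is bounded below by the same integral with $L(\rho)$ in place of $\Delta_g\rho$.

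Third, after factoring out the strictly positive weight $w(\rho)$, the resulting integrand on the right is
\begin{equation*}
w(\rho)\Big[G'(\rho)+\Big(\tfrac{w'(\rho)}{w(\rho)}+L(\rho)\Big)G(\rho)-(p-1)|G(\rho)|^{p'}\Big],
\end{equation*}
whose bracket is $\geq W(\rho)$ by the Riccati inequality \eqref{Riccati pair-weighted} of \ref{it:wr3}. As $w(\rho)>0$ and $|u|^p\geq 0$, this gives exactly \eqref{W-Reduced}.

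The only genuinely delicate step is the second one, where the distributional inequality $\Delta_g\rho\geq L(\rho)$ must be transferred inside an integral tested against the nonsmooth function $G(\rho)w(\rho)|u|^p$. The cleanest way to make this rigorous is to avoid pairing $\Delta_g\rho$ against $|u|^p$ directly: I would keep the $\Delta_{g,p}\rho$-term of \eqref{additive} in the already integrated-by-parts form $-\int_\Omega G(\rho)w(\rho)|\nabla_g\rho|^{p-2}\nabla_g\rho\cdot\nabla_g(|u|^p)\diff v_g$ (legitimate because $|u|^p\in C^1_0(\Omega)$ for $p>1$), and then apply the distributional definition of $\Delta_g\rho\geq L(\rho)$ to the \emph{nonnegative} test function $G(\rho)w(\rho)|u|^p$ via a standard approximation by nonnegative $C_0^\infty$ functions. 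The sign hypothesis on $G$ in \ref{it:wr2} is precisely what guarantees admissibility (nonnegativity) of this test function in the case where the comparison $\Delta_g\rho>L(\rho)$ holds on a set of positive measure.
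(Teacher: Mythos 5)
Your proposal is correct and follows essentially the same route as the paper's own proof: insert $H(s)=|s|^p/p$ into Theorem \ref{theorem-weighted-main}/(i), use $|\nabla_g\rho|=1$ to collapse $\Delta_{g,p}\rho$ to $\Delta_g\rho$, replace $\Delta_g\rho$ by $L(\rho)$ via the identical case split from \ref{it:wr2} (equality $\diff v_g$-a.e.\ versus $G\geq 0$), and finish with the Riccati inequality \eqref{Riccati pair-weighted} multiplied by $w$. Your two extra observations --- that $\mathcal H^n(\rho^{-1}(\sup_\Omega\rho))=0$ is automatic since the gradient of a Sobolev function vanishes a.e.\ on level sets, and that the distributional inequality should be paired against the nonnegative test function $G(\rho)w(\rho)|u|^p$ in the integrated-by-parts form --- are careful refinements of points the paper leaves implicit, not a different argument.
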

\begin{proof} Let us choose in Theorem \ref{theorem-weighted-main}/(i) the function $H(s)=\frac{|s|^p}{p}$ for every $s\in \mathbb R$. Since $(L,W)$ is a $(p,\rho,w)$-Riccati pair in $(0,\sup_\Omega\rho)$, it admits a $w$-{admissible} function  $G\colon(0,\sup_\Omega\rho)\to \mathbb R$, thus conditions \ref{it:wr1}-\ref{it:wr3} are satisfied. Condition \ref{it:wr1} yields \ref{item:C_0}. Using \ref{it:wr3} we have
\begin{equation}\label{G-W-L}	G'(t)w(t)+\left({w'(t)}+L(t)w(t)\right)G(t){-(p-1)}|G(t)|^{p'}w(t)\geq W(t)w(t),\quad \forall t\in (0,\textstyle\sup_\Omega\rho).
\end{equation}
Since $|\nabla_g\rho|=1$  $\diff v_g$-a.e.\ in $\Omega$, assumption \ref{it:wr2} implies that $\Delta_{g,p}\rho=\Delta_g \rho\geq L(\rho)$ in the distributional sense in $\Omega$.
 
 On the one hand, if  $\Delta_g\rho =L(\rho)$ $\diff v_g$-a.e.\ in $\Omega$, inequality  \eqref{W-Reduced} follows directly from \eqref{additive} and \eqref{G-W-L}. On the other hand, if $\mathcal H^n(\{x\in \Omega:\Delta_g \rho(x) > L(\rho(x))\})\neq 0,$ by \ref{it:wr2} one has that $G$ is nonnegative. Accordingly, we have again by \eqref{additive} and \eqref{G-W-L} the estimates
 \begin{align*}
 	\int_\Omega	w(\rho) |\nabla_gu|^{p} \diff v_g &\geq  \int_\Omega  \left[ (G'(\rho) w(\rho)+ G(\rho) w'(\rho))  +  G(\rho) w(\rho) \Delta_{g}\rho\right]|u|^p\diff v_g \\
	&\qquad{-(p-1)}\int_\Omega |G(\rho)|^{p'}w(\rho) |u|^p\diff v_g\\
	&\geq\int_\Omega  \left[ (G'(\rho) w(\rho)+ G(\rho) w'(\rho))  +  G(\rho) w(\rho) L(\rho)\right]|u|^p\diff v_g\\
	&\qquad {-(p-1)}\int_\Omega |G(\rho)|^{p'}w(\rho) |u|^p\diff v_g\\
	&\geq \int_\Omega W(\rho)w(\rho)|u|^p \,   \diff v_g,
 \end{align*}
 which is precisely relation  \eqref{W-Reduced}.
\end{proof}
\begin{remark}\label{remark-sign} For the model space form ${\bf M}_\kappa^n$  and $\rho=d_{x_0}$ for some $x_0\in {\bf M}_\kappa^n$ fixed, it follows that $\Delta_g d_{x_0}= (n-1){\bf ct}_\kappa(d_{x_0})$ on ${\bf M}_\kappa^n$ except the point $x_0$ and its cut locus (which is empty for $\kappa\leq 0$, and the antipodal point of $x_0$ on the $\kappa$-sphere $\mathbb S_\kappa^n$ when $\kappa>0$). Therefore, in such cases we choose $L(t)=(n-1){\bf ct}_\kappa(t)$ (with the usual interval restriction when $\kappa>0$) and \textit{no sign restriction} should be imposed on the function $G$, see \ref{it:wr2} and the proof of Theorem \ref{W-theorem}.
	However, on generic Riemannian manifolds, the nonnegativeness of $G$ is indispensable in our arguments.
	\end{remark}
%
%
%
%
%
%
%
%
%
In the sequel we establish the connection between Riccati and Bessel pairs. According to Duy, Lam and Lu \cite[Definition 1.1]{DuyLL}, if $p>1$ and $A,B\colon(0,R)\to \mathbb R$ are functions with $A$ being of class $C^1$, the couple $(A,B)$ is a $p$-\textit{Bessel pair in} $(0,R)$ if the ODE
\begin{equation}\label{Bessel-pair}
	\left(t^{n-1}A(t)|y'(t)|^{p-2}y'(t)\right)'+t^{n-1}B(t)|y(t)|^{p-2}y(t)=0
\end{equation}
has a positive solution in $(0,R)$; for the initial version  $(p=2)$, see  Ghoussoub and Moradifam \cite{GM-Annalen, GM-PNAS}.
We have the following
\begin{proposition}\label{Bessel-Riccati}
Let $R>0$ and  $w,W\colon(0,R)\to (0,\infty)$ be two potentials with $w$ of class $C^1$. The function $y>0$ is a solution of \eqref{Bessel-pair} on $(0,R)$ for the couple $(A,B)=(w,wW)$ if and only if 
	\begin{equation}\label{eq:g}
		G(t)=-\frac{|y'(t)|^{p-2}y'(t)}{y(t)^{p-1}}
	\end{equation}
	is a solution of 
	\begin{equation}\label{RP-ODE}
		G'(t)+\left(\frac{w'(t)}{w(t)}+\frac{n-1}{t}\right)G(t){-(p-1)}|G(t)|^{p'}=  W(t),
	\end{equation} 
	on $(0,R)$, that is precisely \eqref{Riccati pair-weighted} with equality and $L(t)=\frac{n-1}{t}$.
\end{proposition}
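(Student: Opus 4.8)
The plan is to carry out the standard $p$-logarithmic (Riccati) substitution that turns the second-order quasilinear Bessel ODE into a first-order one, and to observe that every step is reversible because $y$, $w$ and $t^{n-1}$ are strictly positive. Write $\phi(t) := |y'(t)|^{p-2}y'(t)$, so that \eqref{Bessel-pair} with $(A,B)=(w,wW)$ and $y>0$ (hence $|y|^{p-2}y = y^{p-1}$) reads
$$\left(t^{n-1}w(t)\phi(t)\right)' + t^{n-1}w(t)W(t)\,y(t)^{p-1} = 0,$$
while $G = -\phi/y^{p-1}$ by \eqref{eq:g}.

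First I would record two algebraic identities forced by the definition of $G$ alone, independent of the ODE. Setting $s := y'/y$ and using $y>0$ one has $|y'|^{p-2}y' = |s|^{p-2}s\,y^{p-1}$, whence $|s|^{p-2}s = -G$; inverting the homeomorphism $r\mapsto |r|^{p-2}r$, whose inverse is $q\mapsto|q|^{p'-2}q$ since $p'-1 = \tfrac{1}{p-1}$, gives the crucial relation
$$\frac{y'}{y} = -|G|^{p'-2}G.$$
Likewise $|G| = (|y'|/y)^{p-1}$ together with $(p-1)p' = p$ yields $|G|^{p'} = |y'|^p/y^p$. Next I would differentiate $\phi = -G\,y^{p-1}$ to obtain $\phi' = -G'y^{p-1} - (p-1)G\,y^{p-2}y'$, divide by $y^{p-1}$, substitute the expression for $y'/y$ above, and use $G^2|G|^{p'-2} = |G|^{p'}$; this collapses to
$$\frac{\phi'}{y^{p-1}} = -G' + (p-1)|G|^{p'}.$$

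Finally I divide the displayed Bessel equation by the strictly positive factor $t^{n-1}w(t)y(t)^{p-1}$, compute the logarithmic derivative $\tfrac{(t^{n-1}w)'}{t^{n-1}w} = \tfrac{n-1}{t}+\tfrac{w'}{w}$, and insert $\phi/y^{p-1} = -G$ together with the formula for $\phi'/y^{p-1}$. Rearranging produces exactly \eqref{RP-ODE}. The equivalence is genuinely two-sided: each operation above (division by $t^{n-1}wy^{p-1}>0$, the inversion of $r\mapsto|r|^{p-2}r$, and the differentiation) is reversible, so the chain of identities can be read equally well from \eqref{RP-ODE} back to \eqref{Bessel-pair}.

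I do not anticipate a serious obstacle beyond bookkeeping; the one place demanding genuine care is the $p$-Laplacian nonlinearity, namely correctly inverting $s\mapsto|s|^{p-2}s$ and tracking absolute values and signs through the exponents $p'-2 = \tfrac{2-p}{p-1}$ and $(p-1)p'=p$. These are transparent when $p=2$ but must be verified explicitly for general $p>1$. I would also remark on the matching regularity in passing: $y>0$ being a solution of \eqref{Bessel-pair} makes $t^{n-1}w\phi$ differentiable, which is precisely what is needed for $G$ to be differentiable and to satisfy \eqref{RP-ODE}, and conversely.
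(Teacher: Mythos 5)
Your proposal is correct and follows essentially the same route as the paper, whose entire proof is the one-line remark that inserting $G$ from \eqref{eq:g} into \eqref{RP-ODE} yields \eqref{Bessel-pair} by a simple, reversible computation; you have merely written out that computation in full, including the inversion of $r\mapsto|r|^{p-2}r$, the identities $|G|^{p'}=|y'|^p/y^p$ and $\phi'/y^{p-1}=-G'+(p-1)|G|^{p'}$, and the division by the positive factor $t^{n-1}w\,y^{p-1}$. The sign and exponent bookkeeping for general $p>1$ checks out, and your closing remark on regularity correctly handles the point the paper leaves implicit.
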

\begin{proof}
	Inserting $G$ from \eqref{eq:g} into \eqref{RP-ODE} a simple computation yields \eqref{Bessel-pair}; since all steps can be reversed, the two equations are equivalent.
\end{proof}

 \begin{remark}\label{bessel-pair-generalization} Given a Riemannian manifold $(M,g)$  with sectional curvature ${\bf K}\leq \kappa$ for some  $\kappa\in \mathbb R$, a more appropriate notion for the $p$-Bessel pair $(A,B)$ instead of \eqref{Bessel-pair} is
 \begin{equation}\label{Bessel-pair-modified}
 	\left({\bf s}^{n-1}_\kappa(t)A(t)|y'(t)|^{p-2}y'(t)\right)'+{\bf s}^{n-1}_\kappa(t)B(t)|y(t)|^{p-2}y(t)=0,\quad t\in (0,R).
 \end{equation}
Indeed, when $\kappa=0$, equation \eqref{Bessel-pair-modified} reduces to \eqref{Bessel-pair}, while for $\kappa\neq 0$, the density $\textbf{s}_\kappa$ encodes the  curvature and explains the choice of $L(t)=(n-1){\bf ct}_\kappa(t).$ This observation will be crucial in some functional inequalities in the forthcoming sections that will be obtained by means of Riccati pairs, see e.g.\ Cheng's comparison principle for the first eigenvalue (see Theorem \ref{Cheng-theorem}).
 \end{remark}
 
 We conclude the section by establishing the relationship between Bessel/Riccati pairs and the classical approach of supersolutions.

 \begin{remark}\label{rem:supersolution}
	By the theory of supersolutions, if a second order elliptic operator admits a positive supersolution, then the operator is positive in the sense of quadratic forms, see Davies~\cite[Theorem 4.2.1]{Daviesbook}. Accordingly, if \(\Omega\subseteq \mathbb{R}^n\) is a domain containing the origin, \(W\in L^1_\text{loc}(\Omega)\cap C(\Omega\setminus\{0\})\) and $\varphi\in C^2(\Omega\setminus\{0\})$ is positive such that 
	\begin{equation}\label{eq:supersolution}
		(-\Delta-W)\varphi(x)\ge 0,\quad \forall x\in \Omega\setminus\{0\},
	\end{equation}
	then the operator \((-\Delta-W)\ge 0\) is positive; more precisely, one has
	\[\int_\Omega |\nabla u|^2\diff x\ge \int_\Omega Wu^2\diff x,\quad \forall u\in C_0^\infty(\Omega).\]
	Observe that if \(\varphi(x)=y(|x|)\) for some function \(y\), then~\eqref{eq:supersolution} translates to 
	\[-y''(t)-\frac{n-1}{t}y'(t)-W(t)y(t)\ge0,\quad \forall t\in(0,\sup\{|x|: x\in \Omega\}).\]
	The equality case of the above relation perfectly agrees with equation~\eqref{Bessel-pair} for \(A\equiv1\), \(B=W\) and \(p=2\). We note that similar arguments apply in the weighted case and \(p>1\). In addition, by a suitable Laplace comparison, these arguments can be extended to Riemannian manifolds via relation~\eqref{Bessel-pair-modified}. In addition, the above observations extend to Riccati pairs by Proposition~\ref{Bessel-Riccati} and Remark~\ref{bessel-pair-generalization}.
 \end{remark}


\section{Applications I: Additive Hardy-type inequalities via Riccati pairs} \label{section-Hardy}
\subsection{Caccioppoli   inequalities}\label{Caccippoli-subsection}

The first simple consequence of Theorem \ref{theorem-weighted-main} is a Caccioppoli-type  inequality, proved by D'Ambrosio and Dipierro \cite[Theorems 2.1 \& 3.1; Corollary 2.3]{DDipierro}.

\begin{theorem}\label{theorem-Hardy-1}
	Let $(M,g)$ be a complete, non-compact $n$-dimensional Riemannian manifold, with  $n\geq 2$. Let  $\Omega\subseteq M$ be a domain, $p>1$, and $\rho\in W_{\rm loc}^{1,p}(\Omega)$ be a nonnegative function. If $\alpha\in \mathbb R$ such that $-(p-1-\alpha)\Delta_{g,p} \rho\geq 0$ in the distributional sense in $\Omega$, and $|\nabla_g\rho|^p\rho^{\alpha-p},\rho^\alpha\in L_{\rm loc}^1(\Omega)$,
	 then for every $u\in C_0^\infty(\Omega)$ one has
	\begin{equation}\label{first-Hardy}
		\int_\Omega	\rho^\alpha|\nabla_gu|^{p} \diff v_g \geq \left(\frac{|p-1-\alpha|}{p}\right)^p \int_\Omega	\rho^\alpha\frac{|u|^{p}}{\rho^p} |\nabla_g\rho|^p\diff v_g.
	\end{equation}
\end{theorem}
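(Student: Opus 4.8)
The plan is to apply Theorem~\ref{theorem-weighted-main}/(i) directly, with the weight $w(t)=t^\alpha$, the function $H(s)=|s|^p/p$ (so that $pH(s)=|H'(s)|^{p'}=|s|^p$), and the one-parameter family of candidates $G(t)=c\,t^{1-p}$, where $c\in\mathbb R$ is left free and optimized only at the end. First I would record the elementary identities $w'(t)=\alpha t^{\alpha-1}$, $G'(t)=c(1-p)t^{-p}$, together with $(1-p)p'=-p$ and $(p-1)p'=p$, which give
\[
 G'(\rho)w(\rho)+G(\rho)w'(\rho)=c(1-p+\alpha)\rho^{\alpha-p}=-c(p-1-\alpha)\rho^{\alpha-p},\qquad |G(\rho)|^{p'}w(\rho)=|c|^{p'}\rho^{\alpha-p}.
\]
Crucially, the two standing hypotheses $|\nabla_g\rho|^p\rho^{\alpha-p},\rho^\alpha\in L^1_{\rm loc}(\Omega)$ are precisely what is needed to verify the integrability condition~\ref{item:C_0} for this choice of $w,G,H$, so Theorem~\ref{theorem-weighted-main}/(i) is applicable.

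Feeding these expressions into~\eqref{additive} and using $pH(u)=|u|^p$, $|H'(u)|^{p'}=|u|^p$, the right-hand side splits into a ``gradient'' part and a ``Laplacian'' part:
\[
 \int_\Omega\Big[-c(p-1-\alpha)-(p-1)|c|^{p'}\Big]\rho^{\alpha-p}|\nabla_g\rho|^p|u|^p\diff v_g+\int_\Omega c\,\rho^{\alpha+1-p}\,\Delta_{g,p}\rho\,|u|^p\diff v_g.
\]
The second integral is where the structural hypothesis $-(p-1-\alpha)\Delta_{g,p}\rho\ge0$ enters. I would choose the sign of $c$ to be opposite to that of $p-1-\alpha$; then $c\,\Delta_{g,p}\rho$ carries the same sign as $-(p-1-\alpha)\Delta_{g,p}\rho\ge0$, and testing the distributional inequality against the nonnegative function $\rho^{\alpha+1-p}|u|^p$ shows this integral is nonnegative and may be discarded when passing to a lower bound.

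It then remains to maximize the scalar coefficient $-c(p-1-\alpha)-(p-1)|c|^{p'}$ over $c$. A convenient point to make is that the \emph{unconstrained} maximizer already has the sign required in the previous step, so there is no tension between the optimization and the discarding of the Laplacian term. Writing $\beta:=p-1-\alpha$ and taking $|c|=(|\beta|/p)^{p-1}$ with $c$ of sign opposite to $\beta$, the identity $(p-1)p'=p$ yields the maximal value $(|\beta|/p)^p=\big(|p-1-\alpha|/p\big)^p$; combined with $\rho^{\alpha-p}|\nabla_g\rho|^p=\rho^\alpha|\nabla_g\rho|^p/\rho^p$ this is exactly~\eqref{first-Hardy}. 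The degenerate case $\beta=0$ (i.e.\ $\alpha=p-1$) is trivial, since then the right-hand side of~\eqref{first-Hardy} vanishes while the left-hand side is nonnegative.

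The only genuinely delicate point I expect is the rigorous use of the distributional inequality on the $\Delta_{g,p}\rho$-term: one must justify that $\rho^{\alpha+1-p}|u|^p$ is an admissible nonnegative test function, in particular near $\{\rho=0\}$, which is guaranteed by the local integrability assumptions, and one must reconcile the mild discrepancy that Theorem~\ref{theorem-weighted-main} is formulated for positive $\rho$ whereas here $\rho$ is only nonnegative. The latter causes no harm because $\nabla_g\rho=0$ a.e.\ on $\{\rho=0\}$ and all integrands are controlled there by the hypotheses, so the set $\{\rho=0\}$ contributes nothing to either side.
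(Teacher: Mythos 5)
Your proposal is correct and takes essentially the same route as the paper: the paper's proof applies Theorem~\ref{theorem-weighted-main}/(i) with exactly $w(t)=t^\alpha$, $H(s)=\frac{|s|^p}{p}$ and $G(t)=-(p-1-\alpha)\frac{|p-1-\alpha|^{p-2}}{p^{p-1}}\,t^{1-p}$, which is your family $c\,t^{1-p}$ with the optimal $c$ (opposite in sign to $p-1-\alpha$, of magnitude $(|p-1-\alpha|/p)^{p-1}$) plugged in from the start, and it discards the $\Delta_{g,p}\rho$-term using the sign hypothesis just as you do. The one point where the paper is cleaner is the reduction from nonnegative to positive $\rho$: instead of your a.e.\ argument on $\{\rho=0\}$ (where $G(\rho)$ is not even defined, so Theorem~\ref{theorem-weighted-main} cannot be invoked directly), the paper replaces $\rho$ by $\rho+\epsilon$ and passes to the limit $\epsilon\to0^+$ via the assumed local integrability of $|\nabla_g\rho|^p\rho^{\alpha-p}$ and $\rho^\alpha$ --- the standard patch for the delicate point you yourself flagged.
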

\begin{proof} 
	Without loss of generality, based on the  assumptions $|\nabla_g\rho|^p\rho^{\alpha-p},\rho^\alpha\in L_{\rm loc}^1(\Omega)$, we may assume that $\rho$ is positive; indeed, eventually we may replace $\rho$ by $\rho+\epsilon$ for some $\epsilon>0$, and then take the limit with respect to $\epsilon>0$, by using the above integrability assumptions. A similar reason is developed in \cite{DDipierro}.

	Let us choose in Theorem \ref{theorem-weighted-main} the functions
	$$w(t)=t^\alpha,\quad G(t)=-(p-1-\alpha)\frac{|p-1-\alpha|^{p-2}}{p^{p-1}}t^{1-p}\quad\mbox{and}\quad H(s)=\frac{|s|^p}{p},\quad\forall t>0,\ \forall s\in\mathbb{R}.$$ 
	Observe that the properties in \ref{item:C_0} are equivalent to $|\nabla_g\rho|^p\rho^{\alpha-p},\rho^\alpha\in L_{\rm loc}^1(\Omega)$. Using the assumption $-(p-1-\alpha)\Delta_{g,p} \rho\geq 0$ and Theorem \ref{theorem-weighted-main}/(i), we directly obtain inequality  \eqref{first-Hardy}.
\end{proof}
%

Some comments are in order.

\begin{remark} \label{remark-Caccioppoli}
	(a) The above argument provides an alternative proof of the $L^p$-Caccioppoli inequality discussed by D'Ambrosio and Dipierro \cite[Theorems 2.1 \& 3.1; Corollary 2.3]{DDipierro}. The sharpness (and eventually the existence of nonzero extremizers) in \eqref{first-Hardy}  is a very delicate issue, even in the Euclidean setting, see e.g.\ Brezis and Marcus \cite{BM} and Filippas,  Maz'ya and  Tertikas \cite{FilMazTert1}.
	In \cite[Theorem 4.1]{DDipierro} a simple criterion is formulated for $\alpha =0$ under the assumptions of Theorem \ref{theorem-Hardy-1}; namely, the constant $((p-1)/p)^p$ is sharp, and (i) if $\rho^{1/p'}\in D^{1,p}(\Omega)$ then the function $\rho^{1/p'}$ is an extremizer, while (ii) if $\rho^{1/p'}\notin D^{1,p}(\Omega)$ and $p\geq 2$, then there is no extremal function. Here, $D^{1,p}(\Omega)$ stands for the completion of $C_0^\infty(\Omega)$ with respect to the Dirichlet norm $u\mapsto (\int_\Omega |\nabla_g u|^pdv_g)^{1/p}$. \medskip
	
	(b) A typical example of $\rho$ verifying the assumptions of Theorem \ref{theorem-Hardy-1} is provided by $\rho(x)={\rm dist}_{g}(x,\partial \Omega)$, $x\in \Omega$, where $\Omega\subset M$ is a bounded domain. Note that $|\nabla_g\rho|=1$  $\diff v_g$-a.e.\ in $\Omega$ and
	 \ref{item:C_0} is  verified for the choices of $w$ and $G$ in Theorem \ref{theorem-Hardy-1}. Indeed, for any constant $\beta\in \mathbb R$ and compact set $K\subset \Omega$, it follows that  $\rho_1:=\max_{x\in K}\rho(x)\geq \min_{x\in K}\rho(x)=\rho_0>0$;  thus $$\int_K \rho^{\beta}\diff v_g\leq \max\left\{\rho_0^{\beta},\rho_1^{\beta}\right\}{\vol_g(K)}<\infty,$$ where ${\vol_g(K)}$ denotes the volume of $K\subset M.$  In particular, $\rho^{\alpha-p},\rho^\alpha\in L_{\rm loc}^1(\Omega)$. Moreover, by the eikonal equation (i.e., $|\nabla_g\rho|=1$  $\diff v_g$-a.e.\ in $\Omega$), one has  $\Delta_{g,p}\rho=\Delta_{g}\rho$ for every $p>1.$\medskip
	
	(c) Let $(M,g)$ be an $n$-dimensional complete Riemannian manifold with \textit{nonnegative sectional curvature} and $N$ be a minimal closed submanifold  of $M$. If $\rho(x)=\rho_N(x)={\rm dist}_g(x,N)$ then $\rho$ is superharmonic, i.e.,
	  $-\Delta_g  \rho\geq 0$ a.e.\ in $M,$ see Chen,   Leung and Zhao \cite[Theorem 2.8]{CLZ}.
	
	  Alternatively, if $(M,g)$ is a complete Riemannian manifold with \textit{nonnegative Ricci curvature} and $\Omega\subset M$ is a domain with non-empty piecewise smooth \textit{weakly mean convex} boundary, then $\rho(x)={\rm dist}_{g}(x,\partial \Omega)$ is superharmonic a.e.\ on $\Omega$, see Chen, Leung and Zhao \cite[Corollary 2.9]{CLZ}. In particular, in the Euclidean setting the above statement can be reversed as well, see Lewis, Li and Li \cite{LLL}. Related study on Finsler-type Minkowski spaces can be found in  Della Pietra,  di Blasio and Gavitone \cite{DeBlGav}.
	
\end{remark}

%
%

An immediate consequence of Theorem \ref{theorem-Hardy-1} is the estimate of the first Dirichlet eigenvalue of
the Riemannian $p$-Laplacian operator; for simplicity, we consider the unweighted case $(\alpha=0)$:

\begin{corollary}\label{corol-first}
		Let $(M,g)$ be a complete, non-compact $n$-dimensional Riemannian manifold with $n\geq 2$, $\Omega\subseteq M$ be a bounded domain, $p>1$, and $\rho(x)={\rm dist}_{g}(x,\partial \Omega)$ for every $x\in \Omega$.  If $-\Delta_g \rho\geq 0$ in the distributional sense  in $\Omega$, then the first Dirichlet eigenvalue of
	the Riemannian $p$-Laplacian can be estimated as
	$$\lambda_{1,g}(\Omega)=\inf_{u\in C_0^{\infty}(\Omega)\setminus \{0\}}\frac{\int_\Omega	|\nabla_gu|^{p} \diff v_g}{\int_\Omega	|u|^{p} \diff v_g}\geq \left(\frac{p-1}{p}\right)^p\frac{1}{R_\Omega^p},$$
	where $R_\Omega=\sup_{x\in \Omega}\rho(x)$ is the Riemannian-inradius of the domain $\Omega\subset M$.
\end{corollary}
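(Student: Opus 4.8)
The plan is to derive Corollary~\ref{corol-first} as an immediate specialization of Theorem~\ref{theorem-Hardy-1} with the parameter choice $\alpha=0$. First I would verify the hypotheses of Theorem~\ref{theorem-Hardy-1} in this setting: the function $\rho(x)=\dist_g(x,\partial\Omega)$ is nonnegative, and since $\Omega$ is bounded, the integrability conditions $|\nabla_g\rho|^p\rho^{\alpha-p},\rho^\alpha\in L^1_{\rm loc}(\Omega)$ hold for $\alpha=0$ (they reduce to $\rho^{-p},1\in L^1_{\rm loc}(\Omega)$, which follows from the argument in Remark~\ref{remark-Caccioppoli}/(b), where on any compact $K\subset\Omega$ one has $\rho$ bounded away from $0$). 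With $\alpha=0$ the sign condition $-(p-1-\alpha)\Delta_{g,p}\rho\geq 0$ becomes $-(p-1)\Delta_{g,p}\rho\geq 0$, i.e.\ $\Delta_{g,p}\rho\leq 0$; since by the eikonal equation $|\nabla_g\rho|=1$ gives $\Delta_{g,p}\rho=\Delta_g\rho$ (again Remark~\ref{remark-Caccioppoli}/(b)), this is exactly the assumed superharmonicity $-\Delta_g\rho\geq 0$.

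Having checked the hypotheses, inequality~\eqref{first-Hardy} with $\alpha=0$ and $|\nabla_g\rho|=1$ reads
\[
\int_\Omega |\nabla_g u|^p\diff v_g \geq \left(\frac{p-1}{p}\right)^p\int_\Omega \frac{|u|^p}{\rho^p}\diff v_g,\quad \forall u\in C_0^\infty(\Omega),
\]
where $|p-1-0|=p-1$. The next step is to bound the weight from below: since $\rho(x)=\dist_g(x,\partial\Omega)\leq R_\Omega$ for every $x\in\Omega$, we have $\rho^{-p}\geq R_\Omega^{-p}$ pointwise, hence
\[
\int_\Omega \frac{|u|^p}{\rho^p}\diff v_g \geq \frac{1}{R_\Omega^p}\int_\Omega |u|^p\diff v_g.
\]
Combining the two displays gives, for every $u\in C_0^\infty(\Omega)\setminus\{0\}$,
\[
\frac{\int_\Omega |\nabla_g u|^p\diff v_g}{\int_\Omega |u|^p\diff v_g}\geq \left(\frac{p-1}{p}\right)^p\frac{1}{R_\Omega^p},
\]
and taking the infimum over such $u$ yields the claimed estimate for $\lambda_{1,g}(\Omega)$.

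This proof is essentially a two-line reduction, so I do not anticipate a genuine obstacle; the only point requiring a small amount of care is the bookkeeping around the singular weight. One should make sure the Hardy integral $\int_\Omega \rho^{-p}|u|^p\diff v_g$ is finite for $u\in C_0^\infty(\Omega)$ before dropping the weight for the lower bound, but this is guaranteed because $\mathrm{supp}(u)$ is a compact subset of $\Omega$ on which $\rho$ is bounded away from zero, and then the replacement $\rho\mapsto\rho+\epsilon$ (as in the proof of Theorem~\ref{theorem-Hardy-1}) followed by $\epsilon\to 0^+$ handles any boundary subtlety uniformly. Thus the corollary follows directly, with the Riemannian-inradius $R_\Omega=\sup_{x\in\Omega}\rho(x)$ entering purely through the pointwise lower bound on $\rho^{-p}$.
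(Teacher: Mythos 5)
Your proof is correct and matches the paper's intended argument: the corollary is stated there as an immediate consequence of Theorem~\ref{theorem-Hardy-1} with $\alpha=0$, using $|\nabla_g\rho|=1$ and $\Delta_{g,p}\rho=\Delta_g\rho$ from Remark~\ref{remark-Caccioppoli}/(b), followed by the pointwise bound $\rho\leq R_\Omega$. Your careful verification of the hypotheses and the finiteness of the Hardy integral simply makes explicit what the paper leaves implicit.
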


%


Using the notation $R_\Omega=\sup_{x\in \Omega}\rho(x)$ from Corollary \ref{corol-first}, in the spirit of Brezis and Marcus \cite{BM}, and Barbatis, Filippas and Tertikas~\cite{BFT}, we provide an improvement of Theorem \ref{theorem-Hardy-1} with a suitable reminder term, whenever $1<p\leq 2$:

\begin{theorem}\label{theorem-Cacc-improv}
	Under the same assumptions as in Corollary \ref{corol-first}, if  $1<p\leq 2$, one has
	\begin{equation}\label{first-Hardy-improved}
		\int_\Omega	|\nabla_gu|^{p} \diff v_g \geq \left(\frac{p-1}{p}\right)^p \int_\Omega	\frac{|u|^{p}}{\rho^p}(1+R_p(\rho))\diff v_g
	\end{equation}
	for every $u\in C_0^\infty(\Omega)$, where
$$R_p(t)=\left(1+\log^{-1}\left(\frac{t}{eR_\Omega}\right)\right)^{p-2}\left(1+(2-p)\log^{-1}\left(\frac{t}{eR_\Omega}\right)+\log^{-2}\left(\frac{t}{eR_\Omega}\right)\right)-1 \geq 0,\quad t\in (0,R_\Omega).$$
In particular, for $p=2$, we have
\begin{equation}\label{first-Hardy-improved-2}
	\int_\Omega	|\nabla_gu|^{2} \diff v_g \geq \frac{1}{4} \int_\Omega	\frac{u^2}{\rho^2}\diff v_g+\frac{1}{4} \int_\Omega	\frac{u^2}{\rho^2}\log^{-2}\left(\frac{\rho}{eR_\Omega}\right)\diff v_g.
\end{equation}
\end{theorem}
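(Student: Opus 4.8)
The plan is to run the same scheme as in the proof of Theorem~\ref{theorem-Hardy-1}: apply the additive form \eqref{additive} with $w\equiv 1$ and $H(s)=|s|^p/p$, so that $pH(u)=|H'(u)|^{p'}=|u|^p$, and reduce matters to verifying a Riccati differential identity for a suitable $G$. Since $\rho=\mathrm{dist}_g(\cdot,\partial\Omega)$ satisfies the eikonal equation $|\nabla_g\rho|=1$ a.e., we have $\Delta_{g,p}\rho=\Delta_g\rho\le 0$. Choosing $G<0$ (below), the term $\int_\Omega G(\rho)\Delta_g\rho\,|u|^p\diff v_g$ is nonnegative and may be dropped, so that \eqref{additive} yields
\[\int_\Omega|\nabla_gu|^p\diff v_g\ \ge\ \int_\Omega\bigl[G'(\rho)-(p-1)|G(\rho)|^{p'}\bigr]|u|^p\diff v_g.\]
Thus it suffices to produce $G$ with $G'(t)-(p-1)|G(t)|^{p'}=\bigl(\tfrac{p-1}{p}\bigr)^p t^{-p}(1+R_p(t))$ on $(0,R_\Omega)$, together with the local integrability \ref{item:C_0} (which holds since $G,G'$ are continuous, hence bounded on compact subintervals of $(0,R_\Omega)$, and $u$ has compact support).

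The ansatz — a logarithmic perturbation of the critical profile $G_0(t)=-(\tfrac{p-1}{p})^{p-1}t^{1-p}$ that produces the unimproved constant, in the spirit of Brezis--Marcus and Barbatis--Filippas--Tertikas — is
\[G(t)=-\left(\frac{p-1}{p}\right)^{p-1}t^{1-p}\left(1+\frac{1}{X(t)}\right)^{p-1},\qquad X(t):=\log\frac{t}{eR_\Omega}.\]
For $t\in(0,R_\Omega)$ one has $X(t)<-1$, whence $1+1/X(t)\in(0,1)$ and $G<0$, as required. Writing $G(t)=G_0(t)A$ with $A=(1+1/X)^{p-1}$ and using $\frac{d}{dt}=\frac1t\frac{d}{dX}$, a direct computation reduces the left-hand side to
\[t^p\Bigl(\tfrac{p-1}{p}\Bigr)^{-p}\bigl[G'(t)-(p-1)|G(t)|^{p'}\bigr]=pA-(p-1)A^{p'}-\frac{p}{p-1}\frac{dA}{dX}.\]
Substituting $A=(1+1/X)^{p-1}$ and factoring out $(1+1/X)^{p-2}$ collapses the right-hand side exactly to $(1+1/X)^{p-2}\bigl(1+(2-p)/X+1/X^2\bigr)=1+R_p(t)$; hence the Riccati relation holds with \emph{equality}, and \eqref{first-Hardy-improved} follows.

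It remains to verify $R_p\ge 0$, which is where the restriction $1<p\le 2$ enters. Put $s=1/X\in(-1,0)$ and consider $f(s)=(1+s)^{p-2}(1+(2-p)s+s^2)$, so that $1+R_p=f(1/X)$ and $f(0)=1$. A short computation gives
\[(\log f)'(s)=\frac{s\,p\,(3-p+s)}{(1+s)\,(1+(2-p)s+s^2)},\qquad s\in(-1,0).\]
Here the denominator is positive (the quadratic has discriminant $(2-p)^2-4<0$), while in the numerator $p>0$, $s<0$, and, since $p\le 2$, one has $3-p+s>2-p\ge 0$; thus $(\log f)'<0$ on $(-1,0)$. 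As $f(0)=1$, monotonicity forces $f>1$, i.e.\ $R_p>0$ on $(0,R_\Omega)$. The main obstacle is locating the correct log-corrected $G$; once it is found, both the Riccati identity and the sign of $R_p$ are mechanical. Finally, specializing $p=2$ (so $A=1+1/X$ and $1+R_2=1+X^{-2}$) gives \eqref{first-Hardy-improved-2}.
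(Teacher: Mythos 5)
Your proposal is correct and takes essentially the same route as the paper: the identical admissible function $G(t)=-\left(\frac{p-1}{p}\right)^{p-1}t^{1-p}\left(1+\log^{-1}\left(\frac{t}{eR_\Omega}\right)\right)^{p-1}$ with $H(s)=\frac{|s|^p}{p}$ in the additive form \eqref{additive-no-weight-0}, discarding the term $G(\rho)\Delta_g\rho\,H(u)\ge 0$ via $G<0$ and the superharmonicity of $\rho$. The only difference is that you carry out in full the ``elementary computation'' (the Riccati relation, which you correctly observe holds with equality) and the monotonicity of $h(z)=(1+z)^{p-2}\left(1+(2-p)z+z^2\right)$ via its logarithmic derivative, both of which the paper merely asserts.
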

\begin{proof} 
	In Theorem \ref{theorem-main-0} we  choose the functions
	$$G(t)=-\left(\frac{p-1}{p}\right)^{p-1}t^{1-p}\left(1+\log^{-1}\left(\frac{t}{eR_\Omega}\right)\right)^{p-1}\quad\mbox{and}\quad H(s)=\frac{|s|^p}{p},\quad\forall t\in(0,R_\Omega),\ \forall s\in\mathbb{R}.$$ Note  that $\sup_\Omega \rho=R_\Omega$ and observe that \ref{item:C_0-no-w-0} clearly holds, see Remark \ref{remark-Caccioppoli}/(b).

	An elementary computation shows the validity of \eqref{first-Hardy-improved}. Note that for  the reminder term we have $R_p(t)\geq 0$ for every $t\in (0,R_\Omega)$ since
	$z:=\log^{-1}\left(\frac{t}{eR_\Omega}\right)\in (-1,0)$ and, due to the fact that $p\in (1,2],$ the function $h(z)=\left(1+z\right)^{p-2}\left(1+(2-p)z+z^2\right)-1$ is decreasing on $(-1,0)$ with $\lim_{z\to 0}h(z)=0$.
\end{proof}


\subsection{Hardy  inequalities and their improvements on Cartan-Hadamard manifolds}\label{Hardy-subsection}

In the sequel, as we already noticed,  we shall see that our approach perfectly works on Cartan-Hadamard manifolds. Before doing this, the following $L^p$-Hardy inequality is stated on Riemannian manifolds with certain constraints, which has been first established by Kombe and \"Ozaydin \cite[Theorem 2.1]{KO-2009} for generic $p>1$; the initial version for $p=2$ is the celebrated result of Carron \cite[Theorem 1.4]{Carron}.

\begin{theorem}\label{them-hardy-1}
	Let $(M,g)$ be a complete, non-compact $n$-dimensional Riemannian manifold with $n\geq 2$. Let $\alpha\in \mathbb R$, $p>1$, and
	$\rho\colon M\to [0,\infty)$ be a function such that $\rho^{-1}(0)\subset M$ is compact,
	$|\nabla_g\rho|=1$ and $\Delta_g \rho\geq \frac{C}{\rho}$ in the distributional sense for some $C>0$ with the property that $C+1+\alpha>p>1$.  Then for every $u\in C_0^\infty(M\setminus \rho^{-1}(0))$ one has
	\begin{equation}\label{second-Hardy-C}
		\int_M\rho^\alpha	|\nabla_gu|^{p} \diff v_g \geq \left(\frac{C+1+\alpha-p}{p}\right)^p \int_M\rho^\alpha	\frac{|u|^{p}}{\rho^p} \diff v_g.
	\end{equation}
\end{theorem}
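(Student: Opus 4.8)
The plan is to read this off as a direct instance of the Riccati pair machinery, namely Theorem \ref{W-theorem}, applied on the domain $\Omega=M\setminus\rho^{-1}(0)$ (on which $\rho>0$ and $\rho\in W^{1,p}_{\rm loc}$ thanks to $|\nabla_g\rho|=1$). I would take the weight $w(t)=t^\alpha$, the comparison function $L(t)=C/t$, and the target potential
$$W(t)=\left(\frac{C+1+\alpha-p}{p}\right)^p t^{-p}.$$
Setting $A:=C+1+\alpha-p$, the hypothesis $C+1+\alpha>p$ gives $A>0$, so $W$ is positive on $(0,\sup_\Omega\rho)$, as required; $L$ is continuous and positive and $w$ is $C^1$ and positive there as well.

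First I would propose the admissible function $G(t)=\left(\frac{A}{p}\right)^{p-1}t^{1-p}$, which is nonnegative since $A>0$, and verify the three conditions of Definition \ref{def:wRP}. For \ref{it:wr1}, using $|\nabla_g\rho|=1$ the requirements in \ref{item:C_0} reduce to local integrability of powers of $\rho$; since $\rho^{-1}(0)$ is compact and every $u\in C_0^\infty(\Omega)$ has support disjoint from it, on each compact $K\subset\Omega$ the function $\rho$ is bounded between two positive constants, so all powers $\rho^\beta$ lie in $L^1(K)$, exactly as in Remark \ref{remark-Caccioppoli}/(b). For \ref{it:wr2}, the hypothesis $\Delta_g\rho\geq C/\rho=L(\rho)$ is precisely the required distributional bound (and $\Delta_{g,p}\rho=\Delta_g\rho$ because $|\nabla_g\rho|=1$), while $G\geq 0$ settles the accompanying sign restriction.

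The substantive step is \ref{it:wr3}. Inserting the trial form $G(t)=ct^{1-p}$ with $c>0$ and using $w'(t)/w(t)=\alpha/t$, a direct computation — relying on $(1-p)p'=-p$ — gives
$$G'(t)+\left(\frac{\alpha}{t}+\frac{C}{t}\right)G(t)-(p-1)|G(t)|^{p'}=\left[cA-(p-1)c^{p'}\right]t^{-p}.$$
Hence the Riccati inequality \eqref{Riccati pair-weighted} holds with equality once $cA-(p-1)c^{p'}$ equals the coefficient of $t^{-p}$ in $W$, and the one-variable maximization of $c\mapsto cA-(p-1)c^{p'}$ over $c>0$ attains its maximum $(A/p)^p$ at $c=(A/p)^{p-1}$, which is exactly the chosen constant. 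Therefore $(L,W)$ is a $(p,\rho,w)$-Riccati pair with $w$-admissible function $G$, and Theorem \ref{W-theorem} yields
$$\int_\Omega \rho^\alpha|\nabla_gu|^p\diff v_g\geq \int_\Omega W(\rho)\rho^\alpha|u|^p\diff v_g=\left(\frac{C+1+\alpha-p}{p}\right)^p\int_\Omega\rho^{\alpha-p}|u|^p\diff v_g,$$
which is precisely \eqref{second-Hardy-C}.

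I do not expect a serious obstacle, since the statement is engineered to fit the Riccati framework; the only points requiring care are the bookkeeping of local integrability near $\rho^{-1}(0)$ (handled by compactness of $\rho^{-1}(0)$ together with compactness of $\operatorname{supp}u$) and the observation that the inequality $C+1+\alpha>p$ is exactly what simultaneously keeps $W$ positive, forces $G\geq 0$, and makes the extremal value of the Riccati ODE equal to the claimed constant $\left(\frac{C+1+\alpha-p}{p}\right)^p$. Should $\Omega=M\setminus\rho^{-1}(0)$ fail to be connected, I would simply apply the argument on each connected component meeting $\operatorname{supp}u$ and sum the resulting inequalities.
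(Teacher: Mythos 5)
Your proof is correct and follows essentially the same route as the paper's: the authors likewise apply Theorem~\ref{W-theorem} on $\Omega=M\setminus\rho^{-1}(0)$ with exactly your choices $w(t)=t^\alpha$, $L(t)=C/t$, $W(t)=\left(\frac{q}{pt}\right)^p$ and $G(t)=\left(\frac{q}{pt}\right)^{p-1}$, where $q=C+1+\alpha-p$ is your $A$, noting as you do that \ref{item:C_0} reduces to $\rho^{\alpha-p},\rho^\alpha\in L^1_{\rm loc}(\Omega)$. Your additional details --- the explicit Riccati computation via $(1-p)p'=-p$, the one-variable maximization explaining the constant, and the component-wise remark if $\Omega$ is disconnected --- are all sound points that the paper leaves implicit.
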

\begin{proof} Denote $q=C+1+\alpha-p>0$. In Theorem \ref{W-theorem} choose  $\Omega=M\setminus \rho^{-1}(0)$, as well as
	$$w(t)=t^\alpha,\quad L(t)=\frac{C}{t},\quad W(t)=\left(\frac{q}{pt}\right)^p\quad\mbox{and}\quad G(t)=\left(\frac{q}{pt}\right)^{p-1},\quad\forall t>0.$$ We obtain that $(L,W)$ is a $(p,\rho,w)$-Riccati pair in $(0,\infty)\supset (0,\sup_\Omega\rho)$ and $G$ is $w$-admissible for $(L,W)$. Indeed, $G>0$ satisfies $$G'(t)+\left(\frac{w'(t)}{w(t)}+L(t)\right)G(t){-(p-1)}|G(t)|^{p'}=  W(t),$$ the validity of \ref{item:C_0} reduces to $\rho^{\alpha-p},\rho^{\alpha}\in L_{\rm loc}^1(\Omega)$, which holds true since $\Omega=M\setminus \rho^{-1}(0)$. Thus, we may apply Theorem \ref{W-theorem}, which concludes the proof.
\end{proof}

\begin{remark} \label{remark-capacity}
	If the $p$-capacity of the compact set $\rho^{-1}(0)\subset M$ is zero, then \eqref{second-Hardy-C} is valid not only in $ C_0^\infty(M\setminus \rho^{-1}(0))$ but also in $C_0^\infty(M)$, see e.g.\ Carron \cite{Carron} and D'Ambrosio and Dipierro \cite{DDipierro}.   In particular, if $n\geq p$ and $\mathcal H^{n-p}(\rho^{-1}(0))<\infty$, 
%
	 then the $p$-capacity of $\rho^{-1}(0)\subset M$ is zero, see Heinonen,  Kilpel\"ainen and Martio \cite{Heinonenetal}.
\end{remark}

A simple consequence of Theorem \ref{them-hardy-1} is the following weighted Hardy inequality.

\begin{corollary}\label{Hardy-alpha}
		Let $(M,g)$ be an  $n$-dimensional Cartan-Hadamard manifold.  Let $1<p<n +\alpha $ for some $\alpha\in \mathbb R$ and $x_0\in M;$  then for every $u\in C_0^\infty(M\setminus \{x_0\})$ one has
		\begin{equation}\label{second-Hardy}
			\int_M d_{x_0}^\alpha	|\nabla_gu|^{p} \diff v_g \geq \left(\frac{n+\alpha-p}{p}\right)^p \int_Md_{x_0}^\alpha	\frac{|u|^{p}}{d_{x_0}^p} \diff v_g.
		\end{equation}
		Moreover, the constant $\left(\frac{n+\alpha-p}{p}\right)^p$ is optimal.
\end{corollary}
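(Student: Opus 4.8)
The plan is to read off the inequality directly from Theorem~\ref{them-hardy-1} and then to prove optimality by concentrating test functions at the singularity $x_0$. For the inequality, I would take $\rho=d_{x_0}$, so that $\rho^{-1}(0)=\{x_0\}$ is compact and the eikonal equation~\eqref{eikonal} gives $|\nabla_g\rho|=1$ $\diff v_g$-a.e. Since $(M,g)$ is Cartan-Hadamard we have ${\bf K}\le 0$, so the Laplace comparison (Theorem~\ref{comparison-theorem}/(I)/(i)) with $\kappa=0$ yields $\Delta_g d_{x_0}\ge (n-1){\bf ct}_0(d_{x_0})=\tfrac{n-1}{d_{x_0}}$ in the distributional sense. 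Thus the hypothesis $\Delta_g\rho\ge C/\rho$ holds with $C=n-1>0$, and the admissibility condition $C+1+\alpha>p$ reads exactly $n+\alpha>p$. Substituting $C=n-1$ into~\eqref{second-Hardy-C} turns the constant $\big(\tfrac{C+1+\alpha-p}{p}\big)^p$ into $\big(\tfrac{n+\alpha-p}{p}\big)^p$, which is~\eqref{second-Hardy}.

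For optimality, the plan is a concentration argument at $x_0$, exploiting that near $x_0$ the manifold is Euclidean to leading order. Because $M$ is Cartan-Hadamard, $\exp_{x_0}$ is a global diffeomorphism, so I can work in global geodesic normal coordinates, in which $d_{x_0}(x)=|x|$ exactly and $g_{ij}(x)=\delta_{ij}+O(|x|^2)$; hence $\diff v_g=(1+O(|x|^2))\diff x$ and $|\nabla_g u|=|\nabla u|(1+O(|x|^2))$, where $|\nabla u|$ denotes the Euclidean gradient norm. Fixing a nonzero $\psi\in C_0^\infty(\mathbb R^n\setminus\{0\})$ supported in an annulus $\{a\le|y|\le b\}$ and setting $u_\lambda(x)=\psi(x/\lambda)$, the substitution $y=x/\lambda$ shows that both integrals in~\eqref{second-Hardy} scale like $\lambda^{n+\alpha-p}$, so
\[
\frac{\displaystyle\int_M d_{x_0}^\alpha|\nabla_g u_\lambda|^p\diff v_g}{\displaystyle\int_M d_{x_0}^{\alpha-p}|u_\lambda|^p\diff v_g}\ \xrightarrow{\lambda\to 0^+}\ \frac{\displaystyle\int_{\mathbb R^n}|y|^\alpha|\nabla\psi|^p\diff y}{\displaystyle\int_{\mathbb R^n}|y|^{\alpha-p}|\psi|^p\diff y},
\]
since the $O(|x|^2)=O(\lambda^2)$ corrections are uniformly negligible on $\mathrm{supp}\,u_\lambda\subset\{a\lambda\le|x|\le b\lambda\}$. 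It then remains to let $\psi$ run through a minimizing family for the right-hand Euclidean weighted Hardy quotient, whose infimum is the classical sharp value $\big(\tfrac{n+\alpha-p}{p}\big)^p$; such a family is obtained by regularizing the virtual extremizer $|y|^{-(n+\alpha-p)/p}$, e.g.\ testing with $|y|^{-(n+\alpha-p)/p+\varepsilon}$ truncated near $0$ and $\infty$ and letting $\varepsilon\to 0^+$.

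The hard part will be the bookkeeping in the concentration step: I must check that the metric, volume, and gradient corrections are genuinely uniform $O(\lambda^2)$ on the support of $u_\lambda$ — which is legitimate precisely because $\psi$ lives in a fixed annulus, so $|x|\le b\lambda$ there — and that they do not spoil the exact cancellation of the $\lambda^{n+\alpha-p}$ prefactors. Once this is secured, optimality reduces cleanly to the Euclidean case, where the sharpness of $\big(\tfrac{n+\alpha-p}{p}\big)^p$ for $n+\alpha>p$ is standard, and I expect no further difficulty.
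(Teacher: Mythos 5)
Your proposal is correct and follows the paper's proof exactly for the inequality: take $\rho=d_{x_0}$ in Theorem~\ref{them-hardy-1}, use the eikonal equation and the Laplace comparison (Theorem~\ref{comparison-theorem}/(I)/(i)) to get $\Delta_g\rho\ge\frac{n-1}{\rho}$, and set $C=n-1$ so that $C+1+\alpha>p$ becomes $n+\alpha>p$. For optimality the paper merely cites references ``in the usual way,'' and your concentration--scaling argument at $x_0$ (normal coordinates with $g_{ij}=\delta_{ij}+O(|x|^2)$, annulus-supported $\psi$, exact $\lambda^{n+\alpha-p}$ scaling, then the classical Euclidean sharp constant via truncations of $|y|^{-(n+\alpha-p)/p}$) is precisely that standard argument, the same one the paper itself executes in the sharpness part of Theorem~\ref{Uncertainty-theorem} with $(1\pm\varepsilon)$ metric bounds in place of your $O(\lambda^2)$ corrections.
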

\begin{proof}
 Let $\rho=d_{x_0}$ in Theorem \ref{them-hardy-1}; note that $|\nabla_g d_{x_0}|=1$ in $\Omega:=M\setminus \{x_0\}$ (see \eqref{eikonal}), and by the Laplace comparison, see Theorem \ref{comparison-theorem}/(I)/(i), it follows that $\Delta_g \rho\geq \frac{n-1}{\rho}$. In particular, we may choose $C=n-1$.

The sharpness of and the lack of extremal functions can be shown in the usual way, see e.g.\ Yang,  Su and Kong \cite[Theorem 3.1]{YSK}, Zhao \cite[Theorem 1.2]{Z} and Krist\'aly \cite[Theorem 4.1]{Kristaly-JMPA}.
\end{proof}

\begin{remark}
	(a)	An alternative way to prove \eqref{second-Hardy} is to choose $$\rho=d_{x_0},\quad w(t)=t^\alpha,\quad G(t)=t^{1-p}\quad\mbox{and}\quad H(s)=\frac{|s|^p}{p},\quad \forall t>0,\ \forall s\in\mathbb{R},$$ 
	in the inequality \eqref{multiplicative} of Theorem \ref{theorem-weighted-main}. Indeed, by the Laplace comparison, see Theorem \ref{comparison-theorem}/(I)/(i), it turns out that $\Delta_g d_{x_0}\geq \frac{n-1}{d_{x_0}}$ in the distributional sense,  thus
	$$\int_M   \left[ (G'(\rho) w(\rho)+ G(\rho) w'(\rho))  |\nabla_g\rho|^{p} +  G(\rho) w(\rho) \Delta_{g,p}\rho\right]H(u)\diff v_g\geq\frac{n+\alpha-p}{p} \int_Md_{x_0}^\alpha	\frac{|u|^{p}}{d_{x_0}^p} \diff v_g$$
	and
	$(G(\rho))^{p'}w(\rho)|H'(u)|^{p'}=d_{x_0}^\alpha\frac{|u|^{p}}{d_{x_0}^p}.$
	It remains to apply \eqref{multiplicative}.\medskip
	
	(b) Inequality \eqref{second-Hardy} has been also established by D'Ambrosio and Dipierro \cite[Theorem 6.5]{DDipierro} for $\alpha=0$, and Zhao \cite[Theorem 1.2]{Z} (for special Finsler manifolds). Further results on Finsler manifolds  can be also found in Mester,  Peter and  Varga \cite{MPV}. \medskip
	
	(c) The proof of Theorem \ref{theorem-weighted-main} can be adapted to the compact case as well, where certain Hardy-type inequalities can be produced; such a result will be provided in the sequel. To do this, let $(M,g)$ be an $n$-dimensional compact Riemannian manifold, $\Omega\subset M$ be a domain and $x_0\in \Omega$.  Let  $w\colon(0,\sup_\Omega\rho)\to (0,\infty)$ and $G\colon(0,\sup_\Omega\rho)\to \mathbb R$  be  $C^1$  functions such that
	\begin{enumerate}[label=\textbf{\rm (\textbf{G})}$_{\rho,w}^{x_0}$]
		\item\label{compactcase}$:$
		$
			G(\rho)  w( \rho)^\frac{1}{p'} \in L_{\rm loc}^{p'}(\Omega\backslash\{x_0\})\ {\rm and} \  G'(\rho) w(\rho) ,   
			G(\rho)w'(\rho), w( \rho)\in L_{\rm loc}^{1}(\Omega\backslash\{x_0\}).
		$
		\end{enumerate}
	In particular, Theorem \ref{theorem-weighted-main}  holds for every $u\in C^\infty_0(\Omega\backslash\{x_0\})$. Moreover, if ${\bf Ric}\geq   0$ and the parameters $p$ and $\beta$ verify  $p\in (1,n)\cup (n,\infty)$,  $\beta<-n$ and $p+\beta>-n$, we have the  Hardy-type inequality
	\begin{align}\label{compachardyineql}
		\int_M |\nabla_g u|^p d_{x_0}^{p+\beta}{\rm d}v_g\geq \left(  \frac{|n+\beta|}{p} \right)^p\int_M |u|^p d_{x_0}^\beta {\rm d}v_g
	\end{align}
	for every $u\in C^\infty(M,x_0):=\{u\in C^\infty(M): u(x_0)=0 \}$.
The proof goes as follows. Let $\Omega=M$, $\rho=d_{x_0}$, as well as 
$$w(t)=t^{p+\beta},\quad  G(t)=-t^{1-p}\quad  \mbox{and}\quad  H_c(s)=c|s|^p, \quad \forall t>0,\ \forall  s\in \mathbb R ,$$ and for $c>0$ chosen later. Clearly, \ref{compactcase} holds. Thus, for any $u\in C^\infty_0(M\backslash\{x_0\})$,  by \eqref{additive} and Theorem \ref{comparison-theorem}/(II)/(i) we have that
		\begin{align*}
			\int_M |\nabla_g u|^p d_{x_0}^{p+\beta}{\rm d}v_g\geq \left( c|\beta+n|{-(p-1)}c^{p'}  \right)\int_M |u|^pd_{x_0}^\beta{\rm d}v_g.
		\end{align*}
		It is easy to check that the maximum of $c|\beta+n|{-(p-1)}c^{p'}$ in $c>0$ is $\left(\frac{|n+\beta|}{p}\right)^{p}$, 
		which implies that \eqref{compachardyineql} holds for every $u\in C^\infty_0(M\backslash\{x_0\})$.
	By the theory of capacity, see Meng, Wang and Zhao \cite[Lemma 3.2]{MWZ}, every $u\in C^\infty(M,x_0)$ can be approximated by a sequence of functions belonging to $ C^\infty_0(M\backslash\{x_0\})$  and hence, \eqref{compachardyineql} follows. The sharpness of  $\left(\frac{|n+\beta|}{p}\right)^{p}$ follows by  \cite{MWZ}.
\end{remark}

When $\alpha=0$ in Corollary \ref{Hardy-alpha}, the limit case $p=n$ does not provide any reasonable inequality similar to \eqref{second-Hardy}. In the next result we prove a parameter-depending Hardy inequality with logarithmic weights, which is valid also in the limit case $p=n$; similar results were established by Edmunds and Triebel \cite{EdmundsTriebel} in the Euclidean case, as well as by D'Ambrosio and Dipierro \cite[Theorem 6.5]{DDipierro}, Nguyen \cite{Nguyen-Royal-Hardy-Rellich}, and Zhao \cite[Theorem 1.3]{Z} on Riemannian/Finsler manifolds.

\begin{theorem}\label{them-elso}
	Let $(M,g)$ be an  $n$-dimensional Cartan-Hadamard manifold, $x_0\in M,$ and $\alpha,p\in \mathbb R$  such that  $1<p\leq n$ and $\alpha+1<p.$
 Then for every $u\in C_0^\infty(B_{x_0}(1)\setminus\{x_0\})$ one has
	\begin{equation}\label{limit-Hardy}
		\int_{B_{x_0}(1)}\log^\alpha(1/d_{x_0})	|\nabla_gu|^{p} \diff v_g \geq \left(\frac{p-\alpha-1}{p}\right)^p \int_{B_{x_0}(1)}\log^{\alpha-p}(1/d_{x_0})	\frac{|u|^{p}}{d_{x_0}^p } \diff v_g.
	\end{equation}
	Moreover, the constant $\left(\frac{p-\alpha-1}{p}\right)^p$ is optimal.
\end{theorem}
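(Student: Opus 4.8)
The plan is to recognize inequality \eqref{limit-Hardy} as a direct instance of Theorem \ref{W-theorem}, by producing an explicit $(p,\rho,w)$-Riccati pair on the punctured ball $\Omega=B_{x_0}(1)\setminus\{x_0\}$. I would take $\rho=d_{x_0}$, so that $|\nabla_g\rho|=1$ by the eikonal equation \eqref{eikonal} and $\sup_\Omega\rho=1$, together with the weight $w(t)=\log^\alpha(1/t)$, the comparison function $L(t)=\tfrac{n-1}{t}$, and the target potential $W(t)=\left(\frac{p-\alpha-1}{p}\right)^p\big(t\log(1/t)\big)^{-p}$; note that $W(\rho)w(\rho)$ is precisely the density appearing on the right-hand side of \eqref{limit-Hardy}. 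Writing $q:=p-\alpha-1>0$ (positive exactly because $\alpha+1<p$), the candidate admissible function, modelled on the one used in Theorem \ref{them-hardy-1}, is
$$G(t)=\left(\frac{q}{p}\right)^{p-1}\big(t\log(1/t)\big)^{1-p},\qquad t\in(0,1).$$

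Next I would verify the three conditions of Definition \ref{def:wRP}. The integrability condition \ref{item:C_0} is immediate: on every compact subset of $\Omega$ the function $\rho$ stays bounded away from $0$ and $1$, so $\log(1/\rho)$ and all relevant powers of $\rho$ and $\log(1/\rho)$ are bounded, hence locally integrable. Condition \ref{it:wr2} follows from the Laplace comparison on Cartan-Hadamard manifolds, Theorem \ref{comparison-theorem}/(I)/(i) with $\kappa=0$, which gives $\Delta_g\rho\ge \tfrac{n-1}{\rho}=L(\rho)$ distributionally; since $q>0$ we have $G>0$, so the sign requirement in \ref{it:wr2} holds irrespective of the measure of $\{\Delta_g\rho>L(\rho)\}$.

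The structural heart of the argument is the Riccati inequality \ref{it:wr3}, i.e.\ \eqref{Riccati pair-weighted}. Using $\tfrac{w'(t)}{w(t)}=-\tfrac{\alpha}{t\log(1/t)}$ and abbreviating $\ell=\log(1/t)$, a direct differentiation shows that (with $|G|^{p'}=G^{p'}$ since $G>0$) the left-hand side of \eqref{Riccati pair-weighted} equals
$$\left(\frac{q}{p}\right)^p (t\ell)^{-p}+\left(\frac{q}{p}\right)^{p-1}(n-p)\,t^{-p}\ell^{1-p}=W(t)+\left(\frac{q}{p}\right)^{p-1}(n-p)\,t^{-p}\ell^{1-p}.$$
The coefficient of $t^{-p}\ell^{-p}$ collapses exactly to $(q/p)^p$ because $q\,(q/p)^{p-1}=q^p/p^{p-1}$ and $(p-1)(q/p)^p=(p-1)q^p/p^p$ combine to $q^p/p^p$; this is the same cancellation that characterizes the virtual extremal. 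The remaining term is nonnegative precisely because $p\le n$ and $\ell>0$ on $(0,1)$ (with equality only in the borderline $p=n$). Hence $(L,W)$ is a $(p,\rho,w)$-Riccati pair with $w$-admissible $G$, and Theorem \ref{W-theorem} yields \eqref{limit-Hardy} verbatim after substituting $\rho=d_{x_0}$.

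For the optimality of $\left(\frac{p-\alpha-1}{p}\right)^p$ I would argue in the standard way. Making the two integrands pointwise proportional (equivalently, reading the extremal off \eqref{RP-ODE}–\eqref{eq:g} through $G=-(-y'/y)^{p-1}$) forces the radial profile $y(t)=\log^{q/p}(1/t)$, for which the ratio of the two densities in \eqref{limit-Hardy} is \emph{exactly} $(q/p)^p$ at every radius — the sphere-area factor of the volume element cancels, so this holds on a general Cartan-Hadamard manifold, not merely on a model. This profile is not admissible because the associated weighted energy density behaves like $\log^{-1}(1/d_{x_0})\sim (1-d_{x_0})^{-1}$ and is therefore non-integrable near the geodesic sphere $\{d_{x_0}=1\}$, which simultaneously explains the sharpness and the absence of an extremal. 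I would thus truncate $y(d_{x_0})$ to a sequence $u_k\in C_0^\infty(\Omega)$, pushing the outer cutoff toward $\{d_{x_0}=1\}$, and show that the quotient of the two sides tends to $(q/p)^p$; the contributions of the fixed inner cutoff near $x_0$ remain lower order (using the asymptotically Euclidean volume \eqref{volume-comp-nullaban}). The hard part will be controlling these cutoff corrections rather than the Riccati verification, which is a clean cancellation; this asymptotic analysis can be carried out exactly as in Yang, Su and Kong \cite{YSK}, Zhao \cite{Z} and Krist\'aly \cite{Kristaly-JMPA}.
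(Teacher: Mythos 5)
Your proposal is correct and follows essentially the same route as the paper: you feed the same $w$-admissible function $G(t)=\left(\frac{p-\alpha-1}{p}\right)^{p-1}\bigl(t\log(1/t)\bigr)^{1-p}$ into Theorem \ref{W-theorem} with $\rho=d_{x_0}$ and $w(t)=\log^\alpha(1/t)$, and, like the paper, you ultimately defer the sharpness analysis to Zhao \cite{Z} (and the related references). The only cosmetic difference is where the hypothesis $p\le n$ enters: you take $L(t)=\frac{n-1}{t}$ and verify \eqref{Riccati pair-weighted} as a genuine inequality with nonnegative surplus $\left(\frac{p-\alpha-1}{p}\right)^{p-1}(n-p)\,t^{-p}\log^{1-p}(1/t)$, whereas the paper takes $L(t)=\frac{p-1}{t}$ (legitimate since $\Delta_g\rho\ge\frac{n-1}{\rho}\ge\frac{p-1}{\rho}$), so that the Riccati relation holds as an exact identity for $G_c(t)=c\,f(t)^{1-p}$, and then maximizes $C=c(p-\alpha-1)-(p-1)c^{p'}$ over $c>0$, landing on precisely your choice $c=\left(\frac{p-\alpha-1}{p}\right)^{p-1}$.
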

\begin{proof} Denote $f(t)=t\log(1/t)>0$ for every $t\in(0,1)$. In Theorem \ref{W-theorem}, let $\rho=d_{x_0}>0$ on the punctured ball $B_{x_0}(1)\setminus \{x_0\}$ and choose the functions $$w(t)=\log^{\alpha}(1/t),\quad L(t)=\frac{p-1}{t},\quad W_C(t)=C f(t)^{-p}> 0\quad\mbox{and}\quad G_c(t)=c f(t)^{1-p}>0,\quad \forall t\in(0,1),$$ and for some $C,c>0$, which will be determined later.  Condition \ref{item:C_0} holds on $B_{x_0}(1)\setminus\{x_0\}$. Since $p\le n$, the Laplace comparison, see Theorem \ref{comparison-theorem}/(I)/(i), implies $\Delta_{g,p}\rho\ge L(\rho)$. Moreover,
	\begin{align*}
		G_c'(t)+\left(\frac{w'(t)}{w(t)}+L(t)\right)G_c(t){-(p-1)}G_c(t)^{p'}&=\left(c(p-\alpha-1){-(p-1)}c^{p'}\right) f(t)^{-p}:=W_C(t),
	\end{align*}
	for every $t\in (0,1)$, 
	where $C=c(p-\alpha-1){-(p-1)}c^{p'}$. Thus, $(L,W_C)$ is a $(p,\rho,w)$-Riccati pair in $(0,1),$ and Theorem~\ref{W-theorem} implies that for every $u\in C_0^\infty(B_{x_0}(1)\setminus \{x_0\})$ one has
	$$\int_{B_{x_0}(1)}\log^\alpha(1/d_{x_0})	|\nabla_gu|^{p} \diff v_g \geq C \int_{B_{x_0}(1)}	\log^{\alpha-p}(1/d_{x_0})\frac{|u|^{p}}{d_{x_0}^p } \diff v_g.$$
	Since we want to maximize the value of $C=c(p-\alpha-1){-(p-1)}c^{p'}>0$ in $c>0$, it follows that the largest value is obtained for $c=((p-\alpha-1)/p)^{p-1}$ which provides  $C=((p-\alpha-1)/p)^p.$
	The sharpness of the inequality is proved in Zhao \cite[Theorem 1.3]{Z}.
\end{proof}

Since the Hardy inequality in Theorem \ref{Hardy-alpha} is sharp, but no nonzero extremals exist, one can expect improvements even in the Riemannian setting; for recent achievements, see e.g.\  Berchio,  D'Ambrosio, Ganguly and Grillo \cite{Berchio-Nonlinear},  Berchio,  Ganguly and Roychowdhury \cite{Berchio-JMAA},  Flynn,  Lam and Lu  \cite{Flynn-JFA}, Krist\'aly \cite{Kristaly-JMPA}.

In the sequel, we provide an alternative approach  to establish improved Hardy inequalities on Cartan-Hadamard manifolds; for the simplicity of presentation,  we shall consider the case $p=2$ and $\alpha=0.$ The first such result `interpolates' between Corollary \ref{Hardy-alpha} and Theorem \ref{them-elso}.

\begin{theorem}\label{adimurth-theorem} Let $(M,g)$ be an  $n$-dimensional Cartan-Hadamard manifold, $n\geq 3,$
	and $\Omega\subset M$ be a bounded domain.  Let  $x_0\in \Omega$ and $D_\Omega=\sup_{x\in \Omega}{d_g(x_0,x)}$. Then for every $u\in C_0^\infty(\Omega)$ one has
	\begin{equation}\label{Adimurthi-Hardy}
		\int_\Omega	|\nabla_gu|^{2} \diff v_g \geq \frac{(n-2)^2}{4} \int_\Omega	\frac{u^2}{d_{x_0}^2}\diff v_g+\frac{1}{4} \int_\Omega	\log^{-2}\left(\frac{d_{x_0}}{eD_\Omega}\right)\frac{u^2}{d_{x_0}^2}\diff v_g.
	\end{equation}
\end{theorem}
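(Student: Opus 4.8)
The plan is to realize the right-hand side weight as $W(\rho)$ for a suitable $(2,\rho)$-Riccati pair and then invoke Theorem \ref{Riccati-theorem-0}. Since $\alpha=0$ I work in the unweighted setting $w\equiv 1$, with $p=2$ (so $p'=2$), and take $\rho=d_{x_0}$, which satisfies $|\nabla_g\rho|=1$ $\diff v_g$-a.e.\ on $\Omega\setminus\{x_0\}$. Because $(M,g)$ is Cartan-Hadamard its sectional curvature obeys ${\bf K}\le 0$, so the Laplace comparison (Theorem \ref{comparison-theorem}/(I)/(i) with $\kappa=0$) gives $\Delta_g\rho\ge \frac{n-1}{\rho}$ in the distributional sense; accordingly I take $L(t)=\frac{n-1}{t}$. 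The target is to produce the admissibility inequality \eqref{Riccati pair-0-0} for
$$W(t)=\frac{(n-2)^2}{4t^2}+\frac{1}{4t^2}\log^{-2}\!\left(\frac{t}{eD_\Omega}\right),\qquad t\in(0,D_\Omega),$$
noting that $\sup_{\Omega\setminus\{x_0\}}\rho=D_\Omega$.

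For the admissible function I would make the ansatz $G(t)=\frac{1}{2t}\big((n-2)-X(t)\big)$, where $X(t)=\log^{-1}(t/(eD_\Omega))$. The key elementary identity is $X'(t)=-X(t)^2/t$, which makes all computations collapse: writing $\frac{\diff}{\diff t}\big(X/t\big)=-\frac{X^2}{t^2}-\frac{X}{t^2}$, a direct substitution into $G'(t)+\frac{n-1}{t}G(t)-G(t)^2$ shows that the part independent of $X$ reproduces exactly the sharp Hardy term $\frac{(n-2)^2}{4t^2}$, the coefficient of $X/t^2$ cancels identically (this is the mechanism behind the absence of a first-order logarithmic term), and the coefficient of $X^2/t^2$ equals $\tfrac14$. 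Hence \eqref{Riccati pair-0-0} holds with equality, i.e.\ $G$ solves the Riccati ODE for $(L,W)$. It remains to check the structural hypotheses: the sign condition in \ref{it:r2} (needed since on a genuine Cartan-Hadamard manifold $\Delta_g\rho>L(\rho)$ on a set of positive measure) and the integrability \ref{item:C_0-no-w-0}. For $t\in(0,D_\Omega)$ one has $\log(t/(eD_\Omega))<-1$, so $X(t)\in(-1,0)$ and thus $(n-2)-X(t)\in(n-2,n-1)$; since $n\ge 3$ this forces $G>0$, securing \ref{it:r2}. The integrability reduces to $\rho^{-2},1\in L^1_{\rm loc}$, which again hold precisely because $n\ge3$ (near $x_0$, $\int_{B_{x_0}(r)}\rho^{-2}\diff v_g\sim\int_0^r s^{n-3}\diff s<\infty$).

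With these verifications, Theorem \ref{Riccati-theorem-0} yields \eqref{Adimurthi-Hardy} for every $u\in C_0^\infty(\Omega\setminus\{x_0\})$. The final step is to upgrade from $\Omega\setminus\{x_0\}$ to all of $\Omega$: since $n\ge3$ and $p=2\le n$, the single point $x_0$ has zero $2$-capacity, so each $u\in C_0^\infty(\Omega)$ can be approximated by a sequence in $C_0^\infty(\Omega\setminus\{x_0\})$ with convergence of both the Dirichlet energy and the two weighted integrals (the latter being finite for $n\ge3$), exactly as in Remark \ref{remark-capacity}; passing to the limit gives the stated inequality. I expect the Riccati computation to be routine once the ansatz and the identity $X'=-X^2/t$ are in hand; the part requiring the most care is the justification of the capacity/density extension across $x_0$, together with the bookkeeping ensuring that $n\ge3$ enters consistently in the sign of $G$, in the local integrability, and in the finiteness of the singular weight on the right-hand side.
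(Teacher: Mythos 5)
Your proposal is correct and follows essentially the same route as the paper: the same Riccati pair with $L(t)=\frac{n-1}{t}$ and $W(t)=\frac{(n-2)^2}{4t^2}+\frac{1}{4t^2}\log^{-2}\bigl(\frac{t}{eD_\Omega}\bigr)$, and your ansatz $G(t)=\frac{1}{2t}\bigl((n-2)-X(t)\bigr)$ with $X(t)=\log^{-1}(t/(eD_\Omega))$ is exactly the paper's fundamental solution $G_C$ at the optimal value $C=\frac14$, since $\frac{1-\sqrt{1-4C}}{2t\log(eD_\Omega/t)}\big|_{C=1/4}=-\frac{X(t)}{2t}$. The only cosmetic difference is that the paper derives the one-parameter family $G_C$ and then optimizes $C\le\frac14$, whereas you fix $C=\frac14$ and verify the Riccati identity directly; your sign check, integrability bookkeeping, and the capacity extension across $x_0$ match the paper's use of Remark \ref{remark-capacity}.
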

\begin{proof}
	We are going to apply Theorem \ref{Riccati-theorem-0} for the set $\Omega\setminus \{x_0\}$, and the positive function $\rho=d_{x_0}$ on $\Omega\setminus \{x_0\}$ (with ${\rm Im}\rho\subseteq (0,D_\Omega)$), as well as  
	$$L(t)=\frac{n-1}{t}\quad\mbox{and}\quad W_C(t)=\frac{(n-2)^2}{4t^2}+\frac{C}{t^2}\log^{-2}\left(\frac{t}{eD_\Omega}\right),\quad\forall t\in (0,D_\Omega),$$ and for some $C>0$, that will be defined later. First, according to the Laplace comparison principle, see Theorem \ref{comparison-theorem}/(I)/(i), one has that $\Delta_g \rho\geq L(\rho)$ on $\Omega\setminus \{x_0\}$.
	Then, we intend to guarantee that $(L,W_C)$ is a $(2,\rho)$-Riccati pair in $(0,D_\Omega);$ to do this, we  are looking for a positive function $G_C$ which solves the
	Riccati-type ODE
	$$G_C'(t)+L(t)G_C(t)-G_C(t)^2= W_C(t),\quad t\in (0,D_\Omega).$$
	The   fundamental solution of the latter equation is given by
	%
	%
	%
	$$G_C(t)=\frac{n-2}{2t}+\frac{1-\sqrt {1-4C}}{2t\log(\frac{eD_\Omega}{t})},\quad  t\in (0,D_\Omega).$$ The function $G_C$ is well-defined and positive whenever  $C\leq \frac{1}{4}$. Clearly, we choose the largest possible value for $C$, i.e., $C=\frac{1}{4}$.
	With this choice of $C$, it turns out that  \ref{item:C_0-no-w-0}  is verified on $\Omega\setminus\{x_0\}$.
	Therefore, $(L,W_{1/4})$ is a $(2,\rho)$-Riccati pair in $(0,D_\Omega),$ and Theorem \ref{Riccati-theorem-0}  provides the proof of   \eqref{Adimurthi-Hardy} for  functions belonging to  $C_0^\infty(\Omega\setminus \{x_0\}).$ Since  $\rho^{-1}(0)=\{x_0\}$, by Remark \ref{remark-capacity} we have the validity of  \eqref{Adimurthi-Hardy} for every function
	in $C_0^\infty(\Omega)$, which concludes the proof.
\end{proof}

Another  improvement of the Hardy inequality in $\mathbb R^n$ is due to Brezis and V\'azquez \cite[Theorem 4.1]{BV};  more precisely,
if $\Omega\subset \mathbb R^n$ is a bounded domain $(n\geq 2)$, one has for every $u\in C_0^\infty(\Omega)$ that
\begin{equation}\label{Brezis-Vazquez-original}
	\int_\Omega	|\nabla u|^{2} \diff x \geq \frac{(n-2)^2}{4} \int_\Omega	\frac{u^2}{|x|^2}\diff x+{j_{0,1}^2}\left(\frac{\omega_n}{\vol(\Omega)}\right)^\frac{2}{n} \int_\Omega	u^2\diff x,
\end{equation}	
where $j_{0,1}\approx 2.4048$ is the first positive root of the Bessel function  $J_0$, and $\omega_n$ is the volume of the unit Euclidean ball. Inequality \eqref{Brezis-Vazquez-original} has been obtained by Schwarz symmetrization and an ingenious 1-dimensional analysis.

In the sequel, by using our approach based on Riccati pairs, we provide a Riemannian version of the result by Brezis and V\'azquez \cite{BV}, which sheds new light on the  appearance  of $j_{0,1}$ in \eqref{Brezis-Vazquez-original}; in fact, we prove a kind of interpolation inequality (see also Remark \ref{remark-brezis-vazquez}).  Let $j_{\nu,k}$ be the $k^{\rm th}$ positive root of the Bessel function $J_\nu$ of the first kind and degree $\nu\in \mathbb R.$

\begin{theorem}\label{Brezis-Vazquez-theorem} Let $(M,g)$ be an  $n$-dimensional Cartan-Hadamard manifold, $n\geq 2,$
	and $\Omega\subset M$ be a bounded domain.  Let  $x_0\in \Omega$ and $D_\Omega=\sup_{x\in \Omega}{d_g(x_0,x)}$. Then for every $\nu\in \left[0,\frac{n-2}{2}\right]$ and $u\in C_0^\infty(\Omega)$ one has 
	\begin{equation}\label{B-V-Hardy}
		\int_\Omega	|\nabla_gu|^{2} \diff v_g \geq \left(\frac{(n-2)^2}{4} -\nu^2\right)\int_\Omega	\frac{u^2}{d_{x_0}^2}\diff v_g+\frac{j_{\nu,1}^2}{D_\Omega^2} \int_\Omega	u^2\diff v_g.
	\end{equation}
\end{theorem}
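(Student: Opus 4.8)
The plan is to apply Theorem \ref{Riccati-theorem-0} (the unweighted reduced inequality with $p=2$) on the punctured domain $\Omega\setminus\{x_0\}$, taking $\rho=d_{x_0}$, so that $|\nabla_g\rho|=1$ by the eikonal equation \eqref{eikonal} and $\mathrm{Im}\,\rho\subseteq(0,D_\Omega)$. I would use the comparison function $L(t)=\frac{n-1}{t}$ together with the two-term potential
$$W(t)=\left(\frac{(n-2)^2}{4}-\nu^2\right)\frac{1}{t^2}+\frac{j_{\nu,1}^2}{D_\Omega^2},\qquad t\in(0,D_\Omega).$$
Since $\nu\in\left[0,\frac{n-2}{2}\right]$, the singular coefficient is nonnegative and the constant term is strictly positive, so $W>0$, as required by Definition following Theorem \ref{theorem-main-0}. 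By the Laplace comparison, Theorem \ref{comparison-theorem}/(I)/(i) with $\kappa=0$, one has $\Delta_g\rho\geq L(\rho)$ in the distributional sense, so \ref{it:r2} holds, while \ref{it:r1} reduces to local integrability of $\rho^{-2}$ and $\rho^0$ on $\Omega\setminus\{x_0\}$, which is immediate.

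The heart of the argument is to exhibit an admissible $G$, i.e.\ a solution of the Riccati equation $G'+\frac{n-1}{t}G-G^2=W$. Following the correspondence of Proposition \ref{Bessel-Riccati} (for $p=2$, $w\equiv1$), I set $G=-y'/y$ and reduce to the linear equation $(t^{n-1}y')'+t^{n-1}W(t)y=0$. The substitution $y(t)=t^{-(n-2)/2}\phi(t)$ removes the first-order term, and the choice of $\frac{(n-2)^2}{4}-\nu^2$ as the singular coefficient is precisely what collapses the equation for $\phi$ to the Bessel equation $\phi''+\frac{1}{t}\phi'+\left(\lambda-\frac{\nu^2}{t^2}\right)\phi=0$ with $\lambda=j_{\nu,1}^2/D_\Omega^2$, whose relevant positive solution is $\phi(t)=J_\nu(\sqrt{\lambda}\,t)$. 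Hence
$$y(t)=t^{-(n-2)/2}J_\nu\!\left(\tfrac{j_{\nu,1}}{D_\Omega}t\right),\qquad G(t)=\frac{n-2}{2t}-\frac{j_{\nu,1}}{D_\Omega}\,\frac{J_\nu'\!\left(\tfrac{j_{\nu,1}}{D_\Omega}t\right)}{J_\nu\!\left(\tfrac{j_{\nu,1}}{D_\Omega}t\right)}.$$
As $t$ ranges over $(0,D_\Omega)$, the argument $\sqrt{\lambda}\,t$ ranges over $(0,j_{\nu,1})$, and since $j_{\nu,1}$ is the first positive zero of $J_\nu$, the Bessel factor stays positive, so $y>0$; this is exactly where the sharp value $\lambda=j_{\nu,1}^2/D_\Omega^2$ (and thus $j_{\nu,1}$) enters, explaining its appearance.

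The main obstacle I anticipate is verifying the \emph{sign condition} $G\geq0$ imposed by \ref{it:r2}: a generic Cartan-Hadamard manifold is not the model $\mathbb{R}^n$, so the Laplace comparison is strict in general and Remark \ref{remark-sign} does not apply. Writing $G(t)=\frac{1}{t}\big(\frac{n-2}{2}-\psi(s)\big)$ with $s=\sqrt{\lambda}\,t$ and $\psi(s)=sJ_\nu'(s)/J_\nu(s)$, it suffices to show $\psi(s)\leq\frac{n-2}{2}$ on $(0,j_{\nu,1})$, and I would in fact prove the stronger bound $\psi(s)<\nu$. Differentiating and inserting Bessel's equation yields the identity $\psi'=\frac{\nu^2-s^2-\psi^2}{s}$; combined with the small-$s$ expansion $\psi(s)=\nu-\frac{s^2}{2(\nu+1)}+O(s^4)$, a first-return argument settles it: were $\psi$ to climb back to $\nu$ at some $s_1$, then $\psi'(s_1)=-s_1<0$, a contradiction. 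Thus $\psi<\nu\leq\frac{n-2}{2}$ throughout, whence $G>0$ and $(L,W)$ is a $(2,\rho)$-Riccati pair with admissible $G$.

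With admissibility in hand, Theorem \ref{Riccati-theorem-0} gives $\int_\Omega|\nabla_g u|^2\diff v_g\geq\int_\Omega W(d_{x_0})u^2\diff v_g$ for every $u\in C_0^\infty(\Omega\setminus\{x_0\})$, which is precisely \eqref{B-V-Hardy} on the punctured domain. Finally, since $\rho^{-1}(0)=\{x_0\}$ has zero $2$-capacity for $n\geq2$ (Remark \ref{remark-capacity}, as $\mathcal{H}^{n-2}(\{x_0\})<\infty$), the inequality extends from $C_0^\infty(\Omega\setminus\{x_0\})$ to all of $C_0^\infty(\Omega)$ by the same density argument used in the proof of Theorem \ref{adimurth-theorem}, completing the proof.
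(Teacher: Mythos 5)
Your proposal is correct and follows the same skeleton as the paper's proof: the same punctured domain $\Omega\setminus\{x_0\}$, the same pair $L(t)=\frac{n-1}{t}$, $W(t)=\left(\frac{(n-2)^2}{4}-\nu^2\right)t^{-2}+j_{\nu,1}^2/D_\Omega^2$, the same admissible function, and the same capacity extension; indeed, via the recurrence $J_\nu'(s)=\frac{\nu}{s}J_\nu(s)-J_{\nu+1}(s)$, your $G(t)=\frac{n-2}{2t}-\sqrt{C}\,\frac{J_\nu'(\sqrt{C}t)}{J_\nu(\sqrt{C}t)}$ with $C=j_{\nu,1}^2/D_\Omega^2$ is exactly the paper's fundamental solution $G_C(t)=\frac{n-2-2\nu}{2t}+\sqrt{C}\,\frac{J_{\nu+1}(\sqrt{C}t)}{J_\nu(\sqrt{C}t)}$. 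The genuine differences lie in two subsidiary steps. First, you \emph{derive} $G$ through Proposition \ref{Bessel-Riccati} and the substitution $y(t)=t^{-(n-2)/2}\phi(t)$ reducing to Bessel's equation, whereas the paper simply exhibits $G_C$; your route makes transparent why the coefficient $\frac{(n-2)^2}{4}-\nu^2$ and the zero $j_{\nu,1}$ appear. Second, and more substantively, you prove the sign condition $G>0$ — which you correctly identify as the crux on a generic Cartan-Hadamard manifold, where the Laplace comparison is strict and Remark \ref{remark-sign} is unavailable — by an elementary first-return argument for $\psi(s)=sJ_\nu'(s)/J_\nu(s)$, using the identity $\psi'=\frac{\nu^2-s^2-\psi^2}{s}$ (which is correct, as one checks from Bessel's equation) together with the expansion $\psi(s)=\nu-\frac{s^2}{2(\nu+1)}+O(s^4)$, concluding $\psi<\nu$ on $(0,j_{\nu,1})$; the paper instead invokes Watson's Mittag-Leffler representation \eqref{Mittag-Leffler} to get $J_{\nu+1}/J_\nu>0$ there. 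The two facts are equivalent by the recurrence above, but your argument is self-contained, avoiding the special-function expansion, and as a bonus yields strict positivity uniformly for $\nu\in\left[0,\frac{n-2}{2}\right]$, including the endpoint $\nu=\frac{n-2}{2}$, where the paper's remark ``since $n-2-2\nu>0$'' degenerates and positivity rests entirely on the Bessel quotient. All peripheral verifications — positivity of $W$, condition \ref{it:r1}, the Laplace comparison for \ref{it:r2}, and the extension to $C_0^\infty(\Omega)$ via the zero $2$-capacity of $\{x_0\}$ (valid for all $n\geq 2$ by Remark \ref{remark-capacity}) — match the paper.
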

\begin{proof}
 	We intend to apply Theorem \ref{Riccati-theorem-0} for the set $\Omega\setminus \{x_0\}$, for the function $\rho=d_{x_0}>0$ on $\Omega\setminus \{x_0\}$, as well as for  $$L(t)=\frac{n-1}{t}\quad\mbox{and}\quad W_C(t)=\frac{1}{t^2}\left(\frac{(n-2)^2}{4} -\nu^2\right)+C> 0,\quad \forall t\in (0,D_\Omega),$$ and for some $C>0$ which will be determined later.
	%
	%
	%
	%
	%
	We clearly have that $\Delta_g \rho\geq L(\rho)$ on $\Omega\setminus \{x_0\}$, while the fundamental solution of the  ODE
	$$G_C'(t)+L(t)G_C(t)-G_C(t)^2= W_C(t),\quad t\in (0,D_\Omega),$$
	is given by the function
	%
	%
	%
	$$G_C(t)=\frac{n-2-2\nu}{2t}+\sqrt{C}\frac{J_{\nu+1}(\sqrt{C}t)}{J_\nu(\sqrt{C}t)},\quad  t\in (0,D_\Omega).$$
	%
	Accordingly, $G_C$ is well-defined whenever
	$J_\nu(\sqrt{C}t)\neq 0$ for every $t\in (0,D_\Omega)$; thus, we need $\sqrt{C}D_\Omega\leq j_{\nu,1}$.
	Clearly, we choose the largest possible value $C:=j_{\nu,1}^2/D_\Omega^2$.

		
	We have that $G_{{C}}$ is positive on $(0,D_\Omega)$.
	Indeed, since $n-2-2\nu>0$, it is enough to show that $J_{\nu+1}(t)/J_\nu(t)\geq 0$, for every $t\in (0,j_{\nu,1})$, which directly follows from the Mittag-Leffler representation, see Watson \cite[p. 498]{Watson},
	\begin{equation}\label{Mittag-Leffler}
		\frac{J_{\nu+1}(t)}{J_\nu(t)}=\sum_{k=1}^\infty \frac{2t}{j_{\nu,k}^2-t^2}> 0,\quad  t\in (0,j_{0,1}).
	\end{equation}

	A simple reasoning shows that \ref{item:C_0-no-w-0} holds on $\Omega\setminus\{x_0\}$ for the above choice of $G_{{C}}$, thus $(L,W_{{ C}})$ is a $(2,\rho)$-Riccati pair in $(0,D_\Omega).$
	Now, we are in the position to apply Theorem \ref{Riccati-theorem-0} obtaining
	$$\int_\Omega	|\nabla_gu|^{2} \diff v_g \geq  \int_\Omega u^2 W_{C}(\rho)   \diff v_g=\left(\frac{(n-2)^2}{4} -\nu^2\right) \int_\Omega	\frac{u^2}{d_{x_0}^2}\diff v_g+\frac{j_{\nu,1}^2}{D_\Omega^2} \int_\Omega	u^2\diff v_g,$$
	for all $u\in C_0^\infty(\Omega\setminus \{x_0\})$. Since  $\rho^{-1}(0)=\{x_0\}$, a similar argument as in  Remark \ref{remark-capacity} implies the validity of \eqref{B-V-Hardy} for every function
	in $C_0^\infty(\Omega)$. 
\end{proof}

Several comments are in order.

\begin{remark} \label{remark-brezis-vazquez}
	(a) The  Brezis-V\'azquez inequality \eqref{Brezis-Vazquez-original} in the Euclidean space $\mathbb R^n$ contains the constant $\left(\omega_n/\vol(\Omega)\right)^{{2}/{n}}$ instead of ${D_\Omega^{-2}}$. 	This fact is explained by the \textit{Schwarz symmetrization} argument, rearranging the sets and functions into radially symmetric objects; indeed, under symmetrization,  the Dirichlet energy $u\mapsto \int_\Omega |\nabla u|^2\diff x$ non-increases (P\'olya-Szeg\H o inequality),
	the $L^2$-norm is preserved (Cavalieri principle), while the Hardy term $u\mapsto \int_\Omega u(x)^2/|x|^2\diff x$ non-decreases (Hardy-Littlewood-P\'olya  inequality). In this way, the problem reduces to a \textit{ball} (with volume $\vol(\Omega)$) and radially symmetric functions; thus a 1-dimensional analysis suffices.
	
	These arguments can be partially repeated in Cartan-Hadamard manifolds: P\'olya-Szeg\H o inequality  is valid whenever the \textit{Cartan-Hadamard conjecture}\footnote{The Cartan-Hadamard conjecture states that the classical Euclidean form of the isoperimetric inequality holds in  Cartan-Hadamard manifolds.} holds (e.g.\ in dimensions 2, 3 and 4), see Hebey \cite{Hebey}, and the $L^2$-norm is preserved; however,  nothing is known about the Hardy term $u\mapsto \int_\Omega u^2/d_{x_0}^2\diff v_g$ under symmetrization. \medskip
	
	(b) We notice that in the `complementary' geometric setting, i.e.\ on  Riemannian/Finsler manifolds with \textit{nonnegative Ricci curvature}, the Hardy term non-decreases under symmetrization, i.e., a  Hardy-Littlewood-P\'olya-type inequality holds, see Krist\'aly, Mester and Mezei \cite{KMM} and Krist\'aly and Szak\'al \cite{K-Sz}. In this way, Euclidean spaces can be viewed as threshold geometric structures  concerning the symmetric rearrangement of Hardy terms. \medskip
	
	(c) Note that if in Theorem \ref{Brezis-Vazquez-theorem} we consider the ball $\Omega=B_{x_0}(R)$ for some $R>0$, then by the Bishop-Gromov volume comparison principle, see Theorem \ref{comparison-theorem}/(I)/(ii), we have that
	$$D_\Omega^{-2}=R^{-2}\geq \left(\frac{\omega_n}{\vol_g(\Omega)}\right)^{{2}/{n}},$$
	which is in a perfect concordance with the original inequality \eqref{Brezis-Vazquez-original} containing the volume of the set. \medskip

(d) Inequality \eqref{B-V-Hardy} trivially interpolates between the inequality of Brezis and V\'azquez $(\nu=0)$ and the celebrated Faber-Krahn inequality $(\nu=\frac{n-2}{2}).$ In the latter case, we obtain a spectral estimate for the first Dirichlet eigenvalue on a domain $\Omega$. Further details are provided in the next section.
\end{remark}


%


\subsection{Spectral  estimates on Riemannian manifolds}\label{Spectral-estimate-subsection}
Let $(M,g)$ be an  $n$-dimensional Riemannian manifold $(n\geq 2),$ $\Omega\subset M$ be a  domain, and $p>1$.  The \textit{first Dirichlet eigenvalue} of $\Omega$ for the $p$-Laplace-Beltrami operator $-\Delta_{g,p}$ on $(M,g)$ is given by
$$	\lambda_{1,p}(\Omega):=\inf_{u\in C_0^\infty(\Omega)\setminus \{0\}} \frac{\int_\Omega	|\nabla_gu|^{p} \diff v_g }{\int_\Omega{|u|^p}\diff v_g}.$$

In this section we show the applicability of our method to obtain spectral gap estimates on Riemannian manifolds. First, we provide a new proof of \textit{Cheng's comparison principle}, see Cheng \cite{Cheng} (whose original proof is based on Barta's argument), then the \textit{Faber-Krahn inequality} on Cartan-Hadamard manifolds, see Chavel \cite{Chavel}, as well as \textit{McKean's spectral gap estimate} are proved. Then, we conclude the section with a short proof of the main result of  Carvalho and  Cavalcante \cite[Theorem 1.1]{Car-Cav-JMAA} concerning the  lower bound of $\lambda_{1,p}$ for the $p$-Laplacian on generic Riemannian manifolds.

\begin{theorem}\label{Cheng-theorem} {\rm (see Cheng \cite{Cheng})}
	Let $(M,g)$ be an $n$-dimensional Riemannian  manifold  with $n\geq 2$ and sectional curvature  ${\bf K}\leq\kappa$ for some $\kappa\in \mathbb R$. Supposing that $x_0\in M$ and $0<R<\min(\operatorname{inj}_{x_0},\pi/\sqrt{\kappa})$ $($with the usual convention, see Theorem~\ref{comparison-theorem}$),$ one has
	\begin{equation}\label{cheng-compar}
		\lambda_{1,2}(B_{x_0}(R))\geq \lambda_{1,2}(B_\kappa(R)),
	\end{equation}
where $B_\kappa(R)$ stands for a ball of radius $R$ in the model space form $\mathbf M_{\kappa}^n$.
\end{theorem}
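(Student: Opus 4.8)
The plan is to realize the first Dirichlet eigenvalue of the model ball as the constant potential $W$ of a $(2,\rho)$-Riccati pair on $(0,R)$, taking $\rho=d_{x_0}$ and the \emph{curvature-adapted} comparison function $L(t)=(n-1){\bf ct}_\kappa(t)$ suggested by Remark \ref{bessel-pair-generalization}. Write $\lambda:=\lambda_{1,2}(B_\kappa(R))$ and let $y\colon[0,R]\to[0,\infty)$ be the radial profile of the (positive, radially symmetric) first Dirichlet eigenfunction on $B_\kappa(R)\subset{\bf M}_\kappa^n$; in geodesic polar coordinates it solves
\begin{equation}\label{aux-radial}
	\left({\bf s}_\kappa^{n-1}(t)\,y'(t)\right)'+\lambda\,{\bf s}_\kappa^{n-1}(t)\,y(t)=0,\quad t\in(0,R),
\end{equation}
with $y>0$ on $[0,R)$, $y'(0)=0$ and $y(R)=0$. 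Equation \eqref{aux-radial} is precisely the curvature-adapted Bessel equation \eqref{Bessel-pair-modified} for $(A,B)=(1,\lambda)$ and $p=2$.

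Following the recipe of Proposition \ref{Bessel-Riccati} in its curvature-adapted form, I would set $G(t):=-y'(t)/y(t)$ for $t\in(0,R)$. Dividing \eqref{aux-radial} by ${\bf s}_\kappa^{n-1}$ and using ${\bf s}_\kappa'/{\bf s}_\kappa={\bf ct}_\kappa$ gives $y''+L(t)y'+\lambda y=0$; since $G'=-y''/y+G^2$, substituting $y''=-L y'-\lambda y$ yields the Riccati identity
$$G'(t)+L(t)G(t)-G(t)^2=\lambda,\quad t\in(0,R),$$
which is \eqref{Riccati pair-0-0} with equality and $W\equiv\lambda$. For the sign of $G$, observe that \eqref{aux-radial} reads $\left({\bf s}_\kappa^{n-1}y'\right)'=-\lambda\,{\bf s}_\kappa^{n-1}y<0$ on $(0,R)$, while ${\bf s}_\kappa^{n-1}(t)y'(t)\to0$ as $t\to0^+$; integrating, ${\bf s}_\kappa^{n-1}y'<0$, so $y'<0$ and therefore $G>0$ on $(0,R)$.

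It remains to verify that $(L,W)$ is a $(2,\rho)$-Riccati pair on $(0,R)$ with admissible $G$. Taking $\Omega=B_{x_0}(R)\setminus\{x_0\}$ and $\rho=d_{x_0}$ (so $|\nabla_g\rho|=1$ and $\sup_\Omega\rho=R$), the hypothesis ${\bf K}\le\kappa$ and the Laplace comparison, Theorem \ref{comparison-theorem}/(I)/(i), give $\Delta_g\rho\ge(n-1){\bf ct}_\kappa(\rho)=L(\rho)$, which is \ref{it:r2}; its sign restriction is met since $G>0$. As $G,G'$ are continuous, hence bounded on every compact subset of $\Omega$ (which stays away from both $x_0$ and $\partial B_{x_0}(R)$, where $G$ blows up), condition \ref{item:C_0-no-w-0} in \ref{it:r1} holds. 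Thus Theorem \ref{Riccati-theorem-0} yields $\int_\Omega|\nabla_gu|^2\diff v_g\ge\lambda\int_\Omega u^2\diff v_g$ for all $u\in C_0^\infty(\Omega)$. Since the point $\{x_0\}$ has zero $2$-capacity for $n\ge2$, the argument of Remark \ref{remark-capacity} extends the inequality to all $u\in C_0^\infty(B_{x_0}(R))$, and passing to the infimum of the Rayleigh quotient gives \eqref{cheng-compar}.

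The main obstacle I anticipate is not the computation but securing the \emph{global} sign $G>0$ on all of $(0,R)$: on a generic manifold the Laplace comparison is a strict inequality on a set of positive measure, so admissibility condition \ref{it:r2} forces $G\ge0$, and without it the proof of Theorem \ref{W-theorem} (hence of Theorem \ref{Riccati-theorem-0}) breaks down. This is exactly where the monotonicity of the ground state enters, which I resolve cleanly by integrating $\left({\bf s}_\kappa^{n-1}y'\right)'<0$ from the origin. A secondary technical point, the passage from the punctured ball to the full ball, is handled by the capacity/approximation argument recalled in Remark \ref{remark-capacity}.
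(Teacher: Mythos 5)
Your proposal is correct and follows essentially the same route as the paper's proof: converting the radial eigenvalue equation on the model ball into the Riccati identity via $G=-y'/y$ with $L(t)=(n-1){\bf ct}_\kappa(t)$, invoking the Laplace comparison to get admissibility, applying Theorem \ref{Riccati-theorem-0} on the punctured ball, and removing the puncture by the capacity argument of Remark \ref{remark-capacity}. The only (welcome) refinement is that you derive the sign $G>0$ directly by integrating $\left({\bf s}_\kappa^{n-1}y'\right)'<0$ from the origin, whereas the paper obtains the non-increasing profile from standard symmetrization arguments.
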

\begin{proof}
	By standard compactness and symmetrization arguments, we known that  $\lambda_{1,2}(B_\kappa(R))$ is achieved by a positive, radially symmetric, non-increasing function  $v\in W_0^{1,2}(B_\kappa(R))$, i.e.,  $v_\kappa=v(d_{\kappa}(\textbf{0},\cdot))$ is the non-increasing profile function of $v$ on $(0,R)$, where $\textbf{0}$ is the center of the ball $B_\kappa(R)\subset \mathbf M_{\kappa}^n.$  By the Euler-Lagrange equation, it turns out that
	\begin{equation}\label{E-L}
		v_\kappa''(t)+(n-1){\bf ct}_\kappa(t)v_\kappa'(t)+\lambda_{1,2}(B_\kappa(R))v_\kappa(t)=0,\quad t\in (0,R),
	\end{equation}
	which can be written equivalently into the form
	\begin{equation}\label{G0-version}
		G_\kappa'(t)+L(t)G_\kappa(t)-G_\kappa^2(t)=W,\quad t\in (0,R),
	\end{equation}
	where $$L(t)=(n-1){\bf ct}_\kappa(t),\quad W\equiv\lambda_{1,2}(B_\kappa(R))\quad\mbox{and}\quad G_\kappa(t)=-\frac{v_\kappa'(t)}{v_\kappa(t)},\quad \forall t\in (0,R).$$ 
	Note  that $G_\kappa\ge 0$ on $(0,R)$, due to the fact that $v_\kappa$ is non-increasing and positive in $(0,R)$. If $\rho=d_{x_0}$ on $B_{x_0}(R)\setminus \{x_0\}$, the Laplace comparison, see Theorem \ref{comparison-theorem}/(I)/(i), implies that $\Delta_g \rho\geq L(\rho)$   on  $B_{x_0}(R)\setminus \{x_0\}$. Since \ref{item:C_0-no-w-0} also holds, thus $G_\kappa$ verifies all the requirements that $(L,W)$ is a $(2,\rho)$-Riccati pair in $(0,R)$. Accordingly, by Theorem \ref{Riccati-theorem-0} one has for every $u\in C_0^\infty(B_{x_0}(R)\setminus \{x_0\})$ that
	$$\int_{B_{x_0}(R)}	|\nabla_gu|^2 \diff v_g \geq  \lambda_{1,2}(B_\kappa(R))\int_{B_{x_0}(R)} u^2    \diff v_g.$$ On account of Remark \ref{remark-capacity}, the latter inequality is valid  for every function
	in $C_0^\infty(B_{x_0}(R))$, which concludes the proof of \eqref{cheng-compar}.
\end{proof}

\begin{remark}
	We notice that \eqref{E-L} is also equivalent to  $$\left({\bf s}^{n-1}_\kappa(t)v_0'(t)\right)'+\lambda_{1,2}(B_\kappa(R)){\bf s}^{n-1}_\kappa(t)v_0(t)=0,\ t\in (0,R),$$
	which is in a perfect concordance with  \eqref{Bessel-pair-modified} from  Remark \ref{bessel-pair-generalization}.
\end{remark}

In the case when the domain is not a ball, as in Cheng's result, a more powerful argument is needed; we shall consider only the case $\kappa=0,$ which corresponds to the famous Faber-Krahn inequality on Cartan-Hadamard manifolds:

\begin{theorem} \label{FK-theorem}	Let $(M,g)$ be an $n$-dimensional Cartan-Hadamard manifold $(n\geq 2)$ which satisfies the Cartan-Hadamard conjecture, and $\Omega\subset M$ be a bounded domain. Then we have
	\begin{equation}\label{FK-estimate}
		\lambda_{1,2}(\Omega)
		\geq \lambda_{1,2}(\Omega^*)= j_{\frac{n}{2}-1,1}^2\left(\frac{\omega_n}{\vol_g(\Omega)}\right)^{{2}/{n}},
	\end{equation}
where $\Omega^*\subset \mathbb R^n$ is a ball   with $\vol(\Omega^*)=\vol_g(\Omega)$.
\end{theorem}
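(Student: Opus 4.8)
**The plan is to prove the Faber-Krahn inequality by combining the Cartan-Hadamard conjecture (which gives us the Pólya-Szegő symmetrization) with the Riccati-pair machinery already established for the model case.** The strategy mirrors the proof of Cheng's comparison principle (Theorem \ref{Cheng-theorem}), but instead of comparing with a ball of the \emph{same radius}, we compare with a Euclidean ball $\Omega^*\subset\mathbb R^n$ of the \emph{same volume}. The known identity $\lambda_{1,2}(\Omega^*)=j_{\frac{n}{2}-1,1}^2\left(\omega_n/\mathrm{vol}(\Omega^*)\right)^{2/n}$ for the Euclidean ball is classical (the first eigenfunction is radial and solves a Bessel-type ODE whose first zero gives $j_{\frac{n}{2}-1,1}$), so the content is the inequality $\lambda_{1,2}(\Omega)\ge\lambda_{1,2}(\Omega^*)$.

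\textbf{First I would invoke symmetrization to reduce to a radial comparison on the model ball.} For $u\in C_0^\infty(\Omega)$, let $u^*$ denote its Schwarz symmetrization on $\Omega^*\subset\mathbb R^n$. Under the Cartan-Hadamard conjecture, the Pólya-Szegő inequality holds, so $\int_\Omega|\nabla_g u|^2\,\mathrm dv_g\ge\int_{\Omega^*}|\nabla u^*|^2\,\mathrm dx$, while the Cavalieri principle preserves the $L^2$-norm, $\int_\Omega u^2\,\mathrm dv_g=\int_{\Omega^*}(u^*)^2\,\mathrm dx$. Consequently the Rayleigh quotient does not increase under symmetrization, which immediately yields $\lambda_{1,2}(\Omega)\ge\lambda_{1,2}(\Omega^*)$ once we know $\lambda_{1,2}(\Omega^*)$ is attained by a radial function. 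Thus the estimate \eqref{FK-estimate} follows directly from the two rearrangement inequalities together with the value of the eigenvalue on the Euclidean ball.

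\textbf{Alternatively, to stay closer to the Riccati-pair philosophy of the paper, I would realize $\lambda_{1,2}(\Omega^*)$ as a lower bound via a Riccati pair built from the Euclidean first eigenfunction.} Let $R$ be the radius of $\Omega^*$ and let $v_0$ be the radial first eigenfunction of $-\Delta$ on $\Omega^*$, so that $v_0$ solves the Euler-Lagrange equation $v_0''(t)+\frac{n-1}{t}v_0'(t)+\lambda_{1,2}(\Omega^*)v_0(t)=0$ on $(0,R)$, exactly as in \eqref{E-L} with $\kappa=0$. Setting $G(t)=-v_0'(t)/v_0(t)$, $L(t)=\frac{n-1}{t}$, and $W\equiv\lambda_{1,2}(\Omega^*)$ transforms this into the Riccati equation \eqref{G0-version} with $\kappa=0$, and $G\ge0$ since $v_0$ is positive and non-increasing. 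By the Laplace comparison (Theorem \ref{comparison-theorem}/(I)/(i) with $\kappa=0$ on a Cartan-Hadamard manifold) one has $\Delta_g d_{x_0}\ge\frac{n-1}{d_{x_0}}=L(d_{x_0})$ on $\Omega\setminus\{x_0\}$. This makes $(L,W)$ a $(2,\rho)$-Riccati pair with $\rho=d_{x_0}$, so Theorem \ref{Riccati-theorem-0} gives $\int_\Omega|\nabla_g u|^2\,\mathrm dv_g\ge\lambda_{1,2}(\Omega^*)\int_\Omega u^2\,\mathrm dv_g$, first for $u\in C_0^\infty(\Omega\setminus\{x_0\})$ and then, by the capacity argument of Remark \ref{remark-capacity} since $\rho^{-1}(0)=\{x_0\}$, for all of $C_0^\infty(\Omega)$.

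\textbf{The main obstacle is the interplay between the two radii.} The Riccati-pair argument of the second approach produces the lower bound $\lambda_{1,2}(B_\kappa(R))$ for a ball of \emph{radius} $R$, not automatically the Faber-Krahn constant keyed to \emph{volume}; to obtain \eqref{FK-estimate} one genuinely needs the symmetrization step, because the Riccati pair alone (which only uses the pointwise Laplace comparison) does not encode the volume comparison. Therefore the honest route is the first approach: the Cartan-Hadamard conjecture is the essential ingredient, and the Riccati/Bessel reformulation merely repackages the known sharp constant $j_{\frac n2-1,1}^2(\omega_n/\mathrm{vol}_g(\Omega))^{2/n}$ on the Euclidean model ball. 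I would therefore present the proof as symmetrization reducing the problem to $\mathbb R^n$, with the eigenvalue of the Euclidean ball computed (or recalled) via the Bessel equation whose relation to \eqref{Bessel-pair} is noted in Remark \ref{bessel-pair-generalization}.
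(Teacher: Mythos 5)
Your first route is exactly the paper's proof: symmetrization via the Cartan-Hadamard conjecture (P\'olya--Szeg\H{o} plus the Cavalieri principle) reduces the Rayleigh quotient to the Euclidean ball $\Omega^*=B_0(R)$ with $R=(\vol_g(\Omega)/\omega_n)^{1/n}$, after which the paper obtains $\lambda_{1,2}(B_0(R))=j_{\frac{n}{2}-1,1}^2/R^2$ by running precisely your Riccati computation -- with $G_0(t)=\sqrt{C}\,J_{\frac{n}{2}}(\sqrt{C}t)/J_{\frac{n}{2}-1}(\sqrt{C}t)$ and $C=j_{\frac{n}{2}-1,1}^2/R^2$ -- on $\Omega^*$ in $\mathbb{R}^n$, where $\sup\rho=R$, rather than on $M$. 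Your diagnosis of why the Riccati pair cannot be applied directly on $M$ (the admissible $G$ lives only on $(0,R)$, while $\sup_\Omega d_{x_0}$ may differ from $R$, so the pointwise Laplace comparison alone yields only radius-keyed Cheng-type bounds and never the volume-keyed constant) is correct and matches the paper's decision to symmetrize first.
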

\begin{proof}
Let $u\in C_0^\infty(\Omega)\setminus\{0\}$ be arbitrarily fixed; without loss of generality, we may assume that $u\geq 0.$ Let $u^*\colon\Omega^*\to [0,\infty)$ be  the symmetric rearrangement of $u$ on the Euclidean ball $\Omega^*\subset \mathbb R^n$ (without loss of generality, we may assume that its center is the origin), see Hebey \cite{Hebey}. By layer cake representation and the validity of the Cartan-Hadamard conjecture on $(M,g)$, both the Cavalieri principle and P\'olya-Szeg\H o inequality hold, thus
$$\frac{\int_\Omega	|\nabla_gu|^{2} \diff v_g }{\int_\Omega{u^2}\diff v_g}\geq \frac{\int_{\Omega^*}	|\nabla u^*|^{2} \diff x }{\int_{\Omega^*}{(u^*)^2}\diff x}.$$

Now, for the right hand side, we are falling into the setting of Theorem \ref{Cheng-theorem} in the case $\kappa=0$. Therefore, let $R=(\vol_g(\Omega)/\omega_n)^{{1}/{n}}$, $\Omega^*=B_0(R)$ and $W_C\equiv C=\lambda_{1,2}(B_0(R))$. Then
 then  equation \eqref{G0-version} for $\kappa=0$ has the solution
$$G_0(t)=\sqrt{C}\frac{J_\frac{n}{2}(\sqrt{C}t)}{J_{\frac{n}{2}-1}(\sqrt{C}t)},\quad  t\in (0,R).$$
%
%
%
%
%
%
%
%
%
%
%
A similar argument as  in the proof of Theorem \ref{Brezis-Vazquez-theorem} shows that the best choice is $$ C=\lambda_{1,2}(B_0(R))=\frac{j_{{\frac{n}{2}-1},1}^2}{R^2}.$$ Clearly,  $G_0$ is positive on $(0,R)$ and  \ref{item:C_0-no-w-0} holds; therefore,  $(L,W_{ C})$ is a $(2,\rho)$-Riccati pair in $(0,R)$. The rest similarly follows as in the proof of Theorem \ref{Cheng-theorem}.
\end{proof}

The following result is  McKean's spectral gap estimate, established by McKean \cite{McKean} for $p=2$ by using fine properties of Jacobi fields; our argument is based on Riccati pairs.

\begin{theorem} \label{McKean-theorem}	Let $(M,g)$ be an  $n$-dimensional Cartan-Hadamard manifold $(n\geq 2),$  with sectional curvature ${\bf K}\leq  \kappa <0$. If $p>1$, then
	\begin{equation}\label{McKean-estimate}
		\lambda_{1,p}(M)
		\geq \left(\frac{n-1}{p}\sqrt{-\kappa}\right)^p.
	\end{equation}
\end{theorem}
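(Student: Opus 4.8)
The plan is to apply Theorem~\ref{Riccati-theorem-0} with the distance function $\rho=d_g(x_0,\cdot)$ for a fixed $x_0\in M$, exactly as indicated in the sketch following Theorem~\ref{Riccati-theorem-0}. Since $(M,g)$ is Cartan-Hadamard, $\rho$ is globally defined and smooth on $M\setminus\{x_0\}$ with $|\nabla_g\rho|=1$ by the eikonal equation~\eqref{eikonal}, and $\sup_\Omega\rho=\infty$ on $\Omega=M$. The key point is that the curvature bound ${\bf K}\leq\kappa<0$ feeds into the Laplace comparison, Theorem~\ref{comparison-theorem}/(I)/(i), giving $\Delta_g\rho\geq(n-1){\bf ct}_\kappa(\rho)=(n-1)\sqrt{-\kappa}\coth(\sqrt{-\kappa}\,\rho)$. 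The flexibility of Riccati pairs allows me to discard the exact $\coth$ factor: since $\coth\geq 1$, I may simply take the constant lower bound $L\equiv(n-1)\sqrt{-\kappa}$, which still satisfies $\Delta_g\rho\geq L(\rho)$ and dramatically simplifies the Riccati inequality~\eqref{Riccati pair-0-0}.

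First I would fix a constant $c>0$ and set the \emph{constant} functions
\[
G\equiv c,\qquad L\equiv(n-1)\sqrt{-\kappa},\qquad W\equiv (n-1)\sqrt{-\kappa}\,c-(p-1)c^{p'}.
\]
With these choices the left-hand side of~\eqref{Riccati pair-0-0} becomes $G'+LG-(p-1)|G|^{p'}=0+(n-1)\sqrt{-\kappa}\,c-(p-1)c^{p'}=W$, so condition~\ref{it:r3} holds with equality. Condition~\ref{item:C_0-no-w-0} is trivial since $G$ and $G'$ are constant (indeed $G'\equiv0$), hence locally integrable on any compact subset of $M$; and condition~\ref{it:r2} holds because $\Delta_g\rho\geq L(\rho)$ by the Laplace comparison, with $G\equiv c\geq0$ taking care of the sign requirement on the set where the inequality is strict. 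Thus $(L,W)$ is a $(p,\rho)$-Riccati pair on $(0,\infty)$ with admissible function $G\equiv c$.

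Applying Theorem~\ref{Riccati-theorem-0} then yields, for every $u\in C_0^\infty(M)$,
\[
\int_M|\nabla_gu|^p\diff v_g\;\geq\;\bigl((n-1)\sqrt{-\kappa}\,c-(p-1)c^{p'}\bigr)\int_M|u|^p\diff v_g,
\]
so that $\lambda_{1,p}(M)\geq(n-1)\sqrt{-\kappa}\,c-(p-1)c^{p'}$ for every $c>0$. The final step is to optimize the right-hand side over $c>0$: differentiating $c\mapsto(n-1)\sqrt{-\kappa}\,c-(p-1)c^{p'}$ and using $p'=p/(p-1)$, the maximizer is $c_\ast=\bigl(\tfrac{n-1}{p}\sqrt{-\kappa}\bigr)^{p-1}$, and a direct substitution gives the maximal value $\bigl(\tfrac{n-1}{p}\sqrt{-\kappa}\bigr)^p=K_{\kappa,n,p}$, which is precisely the claimed bound~\eqref{McKean-estimate}.

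I do not anticipate a serious obstacle here, since the genuine analytic content is packaged into Theorem~\ref{Riccati-theorem-0} and the Laplace comparison; the only points requiring minor care are the passage from $C_0^\infty(M\setminus\{x_0\})$ to $C_0^\infty(M)$ (the singularity of $\rho$ at $x_0$ is removable by a capacity argument as in Remark~\ref{remark-capacity}, since $\{x_0\}$ has zero $p$-capacity when $n\geq2$) and the verification that the elementary one-variable optimization indeed produces $K_{\kappa,n,p}$. The slickness of the proof rests entirely on replacing the sharp but cumbersome comparison $L(t)=(n-1)\sqrt{-\kappa}\coth(\sqrt{-\kappa}\,t)$ by its constant lower bound, which is legitimate precisely because~\ref{it:r2} only demands a one-sided inequality $\Delta_g\rho\geq L(\rho)$.
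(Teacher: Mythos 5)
Your proposal is correct and is essentially identical to the paper's own proof: the same choices $\rho=d_{x_0}$, $L\equiv(n-1)\sqrt{-\kappa}$ (discarding the $\coth$ factor via $\coth\geq1$), $G\equiv c$, $W\equiv(n-1)\sqrt{-\kappa}\,c-(p-1)c^{p'}$, followed by the same one-variable optimization yielding $c_\ast=\bigl(\frac{n-1}{p}\sqrt{-\kappa}\bigr)^{p-1}$ and the constant $\bigl(\frac{n-1}{p}\sqrt{-\kappa}\bigr)^p$. One cosmetic slip: your claim that $\{x_0\}$ has zero $p$-capacity for all $n\geq2$ requires $p\leq n$ (cf. Remark~\ref{remark-capacity}), but this is harmless here since $W$ is constant and $x_0$ is arbitrary, so for any $u\in C_0^\infty(M)$ with compact support in the non-compact $M$ you may simply choose $x_0\notin\operatorname{supp} u$.
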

\begin{proof} 
	Let $x_0\in M$ be arbitrarily fixed, and let  $\rho=d_{x_0}>0$ on $M\setminus \{x_0\}$, as well as 
	$$L\equiv(n-1)\sqrt{-\kappa},\quad W_C\equiv C \quad\mbox{and}\quad G_c\equiv c,$$
	for some $C,c>0$ that will be determined later. Clearly, \ref{item:C_0-no-w-0}  holds.

	The Laplace comparison, see Theorem \ref{comparison-theorem}/(I)/(i), yields 
	$$\Delta_g\rho\geq   {(n-1)}{\bf ct}_\kappa(\rho)=(n-1) \sqrt{-\kappa}\coth(\sqrt{-\kappa} \rho)\ge (n-1) \sqrt{-\kappa}= L(\rho).$$
	Therefore, by choosing $C=(n-1)c\sqrt{-\kappa}{-(p-1)}c^{p'}$ one has
	$$	G_c'(t)+L(t)G_c(t){-(p-1)}{G_c}(t)^{p'}\geq (n-1)c\sqrt{-\kappa}{-(p-1)}c^{p'}=C=W_C(t),\quad t>0,$$
	which proves that $(L,W_C)$ is a $(p,\rho)$-Riccati pair in $(0,\infty)$.\
	The maximum of  $C=(n-1)c\sqrt{-\kappa}{-(p-1)}c^{p'}$ in $c>0$ is obtained for $c=(\frac{n-1}{p}\sqrt{-\kappa})^{p-1}$, which is $C=(\frac{n-1}{p}\sqrt{-\kappa})^p$.  It remains to apply Theorem \ref{Riccati-theorem-0} to conclude the proof of \eqref{McKean-estimate}.
\end{proof}

\begin{remark}
	(a) Another proof of \eqref{McKean-estimate} can be given by the multiplicative form from Theorem \ref{theorem-main-0}/(ii). Indeed, if $H(s)=\frac{|s|^p}{p}$ for $s\in \mathbb R$,  $G\equiv 1$ and $\rho=d_{x_0}$ for some $x_0\in M$, then the Laplace comparison, see Theorem \ref{comparison-theorem}/(I)/(i), implies that  $\Delta_{g}\rho\geq {(n-1)}{\bf ct}_\kappa(\rho)\geq (n-1)\sqrt{-\kappa}$ and \eqref{McKean-estimate} follows at once.\medskip
	
	(b) The constant  $(\frac{n-1}{p}\sqrt{-\kappa})^p$ in \eqref{McKean-estimate} is optimal for the $\kappa$-hyperbolic space $\mathbb H^n_{\kappa}$, i.e., we have that $$\lambda_{1,p}(\mathbb H^n_{\kappa})=\left(\frac{n-1}{p}\sqrt{-\kappa}\right)^p.$$ 
	
	(c)	Instead of \eqref{McKean-estimate}, we can prove an improved version of McKean's spectral gap estimate. Indeed, if $$L(t)=(n-1){\bf ct}_\kappa(t)\quad\mbox{and}\quad W(t)=\left(\frac{n-1}{p}\sqrt{-\kappa}\right)^p+\frac{(n-1)^p}{p^{p-1}} (-\kappa)^\frac{p}{2}(\coth(\sqrt{-\kappa}t)-1),\quad\forall t>0,$$ it turns out that $(L,W)$ is a $(p,\rho)$-Riccati pair in $(0,\infty)$,
	and
	$G\equiv(\frac{n-1}{p}\sqrt{-\kappa})^{p-1}$ is admissible for $(L,W)$. By Theorem \ref{Riccati-theorem-0} we have for every $u\in C_0^\infty(M)$ and $x_0\in M$ that
	$$\int_M	|\nabla_gu|^{p} \diff v_g\geq \left(\frac{n-1}{p}\sqrt{-\kappa}\right)^p\int_M	|u|^{p} \diff v_g+2\frac{(n-1)^p}{p^{p-1}} (-\kappa)^\frac{p}{2}\int_M \frac{	|u|^p}{e^{2\sqrt{-\kappa}d_{x_0}}-1} \diff v_g. $$
	Another improvement of the McKean's spectral gap estimate will be provided in the next section.
\end{remark}

We conclude this section with providing a short proof of the main result of  Carvalho and  Cavalcante \cite[Theorem 1.1]{Car-Cav-JMAA}, valid on generic Riemannian manifolds:

\begin{theorem} \label{Car-Cav-theorem}	Let $(M,g)$ be a  Riemannian manifold, and $\Omega\subset M$ be a domain. Given $p>1$, we assume that there exists a function $\rho\colon\Omega\to \mathbb R$ such that $|\nabla_g\rho|\leq a$ and $\Delta_{g,p}\rho\geq b$ for some  $a,b>0$. Then
	\begin{equation}\label{CC-estimate}
		\lambda_{1,p}(\Omega)\geq
	\frac{b^p}{p^pa^{p(p-1)}}.
	\end{equation}
\end{theorem}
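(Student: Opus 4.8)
The plan is to bypass the Riccati-pair machinery (Theorems~\ref{Riccati-theorem-0} and \ref{W-theorem}), since these require the normalization $|\nabla_g\rho|=1$ which is \emph{not} assumed here; instead I would return to the raw generic additive inequality \eqref{additive} of Theorem~\ref{theorem-weighted-main}/(i), which is valid for an arbitrary $\rho\in W_{\rm loc}^{1,p}(\Omega)$. Note first that $\rho\in W_{\rm loc}^{1,\infty}(\Omega)\subseteq W_{\rm loc}^{1,p}(\Omega)$ because $|\nabla_g\rho|\le a$, that $\rho$ is necessarily nonconstant (otherwise $\Delta_{g,p}\rho\equiv 0<b$), and that it suffices to prove $\int_\Omega|\nabla_gu|^p\diff v_g\ge \tfrac{b^p}{p^pa^{p(p-1)}}\int_\Omega|u|^p\diff v_g$ for every $u\in C_0^\infty(\Omega)$, whence \eqref{CC-estimate} follows by the variational characterization of $\lambda_{1,p}(\Omega)$.

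In \eqref{additive} I would take the \emph{constant} data $w\equiv 1$, $G\equiv c$ for a parameter $c>0$ to be optimized, and $H(s)=|s|^p/p$, so that $G'\equiv 0$, $w'\equiv 0$, $pH(u)=|u|^p$ and $|H'(u)|^{p'}=|u|^p$. Since $G$ and $w$ are constants, condition \ref{item:C_0} reduces to $c|\nabla_g\rho|^{p-1}\in L_{\rm loc}^{p'}(\Omega)$, which holds trivially because $|\nabla_g\rho|\le a$; moreover, the constancy of $G,w$ renders the positivity hypothesis on $\rho$ inessential, as the underlying convexity and integration-by-parts argument in the proof of Theorem~\ref{theorem-weighted-main} never uses the sign of $\rho$. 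With these choices \eqref{additive} collapses to
\[
\int_\Omega|\nabla_gu|^p\diff v_g\ge c\int_\Omega \Delta_{g,p}\rho\,|u|^p\diff v_g-(p-1)c^{p'}\int_\Omega|\nabla_g\rho|^p|u|^p\diff v_g.
\]

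Using $c>0$ together with the hypotheses $\Delta_{g,p}\rho\ge b$ and $|\nabla_g\rho|^p\le a^p$ pointwise, the right-hand side is bounded below by $\bigl(cb-(p-1)a^pc^{p'}\bigr)\int_\Omega|u|^p\diff v_g$. It then remains to maximize $c\mapsto cb-(p-1)a^pc^{p'}$ over $c>0$: the optimum is attained at $c=\bigl(b/(pa^p)\bigr)^{p-1}$ and equals exactly $\tfrac{b^p}{p^pa^{p(p-1)}}$ (using $(p-1)p'=p$), which is the claimed constant. The only delicate point I anticipate is justifying that \eqref{additive} applies verbatim to a possibly sign-changing $\rho$ that does not satisfy $|\nabla_g\rho|=1$; this is precisely where the constant choice of $G$ and $w$ pays off, trivializing both the integrability requirement \ref{item:C_0} and the range restrictions on $\rho$. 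As an alternative, the multiplicative form \eqref{multiplicative} with $G\equiv1$ delivers the same constant in a single step without any optimization, provided one additionally verifies $\mathcal H^n(|\nabla_g\rho|^{-1}(0))=0$ so that the denominator there is positive.
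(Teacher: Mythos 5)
Your proposal is correct and coincides with the paper's own proof: the authors likewise apply the generic additive inequality with $w\equiv 1$, $G\equiv c$, $H(s)=|s|^p/p$, bound the integrand below via $\Delta_{g,p}\rho\geq b$ and $|\nabla_g\rho|\leq a$, and maximize $cb-(p-1)a^pc^{p'}$ at $c=\left(\frac{b}{pa^p}\right)^{p-1}$ to obtain $\frac{b^p}{p^pa^{p(p-1)}}$. Your extra care about the trivialization of \ref{item:C_0} and the inessentiality of the positivity and normalization assumptions on $\rho$ is a sound reading of the proof of Theorem~\ref{theorem-weighted-main}, which the paper glosses over by simply asserting that the hypotheses are ``trivially verified.''
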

\begin{proof}
	We apply the additive form of Theorem \ref{theorem-main-0} with the choice $G\equiv c$ for some $c>0$ (which will be determined later) and $H(s)=\frac{|s|^p}{p}$ for every $s\in \mathbb R.$ Since the assumptions of Theorem \ref{theorem-main-0} are trivially verified, by \eqref{additive-no-weight-0} and the facts that $p>1$, $|\nabla_g\rho|\leq a$ and $\Delta_{g,p}\rho\geq b$, we obtain for every $u\in C_0^\infty(\Omega)$  that
	$$\int_\Omega	|\nabla_gu|^{p}  \diff v_g\geq \int_\Omega |u|^p\left(c\Delta_{g,p}\rho{-(p-1)}c^{p'}|\nabla_g\rho|^p\right)\diff v_g\geq \left(cb{-(p-1)}c^{p'}a^p\right)\int_\Omega |u|^p\diff v_g.$$
	Once we maximize the expression $cb{-(p-1)}c^{p'}a^p$ in $c>0$, the extremal point is  $c=(\frac{b}{pa^p})^{p-1}$, while the maximum is precisely the value $\frac{b^p}{p^pa^{p(p-1)}}$ in \eqref{CC-estimate}. The proof is complete.	
\end{proof}

 \subsection{Interpolation: Hardy inequality versus McKean spectral gap}\label{interpolation-mckean-hardy}

 The main result of this section is to prove an interpolation between the Hardy inequality and McKean's spectral gap, established first by Berchio,  Ganguly, Grillo and Pinchover \cite[Theorem 2.1]{Berchio-Royal}; although the authors stated their result on the hyperbolic space $\mathbb H^n_{-1}$, their approach also works on Cartan-Hadamard manifolds with sectional curvature ${\bf K}\leq  \kappa <0$. In the sequel,  we provide here a simple proof of the same result  by using our approach, based on Riccati pairs.

 \begin{theorem} \label{interpolation-theorem}	Let $(M,g)$ be an  $n$-dimensional Cartan-Hadamard manifold $(n\geq 3)$, having sectional curvature ${\bf K}\leq  \kappa <0$, and $x_0\in M$. Then, for every $\lambda\in [n-2,\frac{(n-1)^2}{4}]$ and $u\in C_0^\infty(M\setminus \{x_0\})$ one has
	\begin{align}
		\nonumber	\int_M |\nabla_g u|^2\diff v_g&\geq \lambda |\kappa|\int_M u^2\diff v_g+h_{n}^2(\lambda) \int_M \frac{u^2}{d_{x_0}^2}\diff v_g+|\kappa|\left(\frac{(n-2)^2}{4}-h_{n}^2(\lambda)\right) \int_M \frac{u^2}{\sinh^2(\sqrt{-\kappa}d_{x_0})}\diff v_g
		\\&\qquad + h_{n}(\lambda)\gamma_{n}(\lambda) \int_M \frac{{\bf D}_\kappa(d_{x_0})}{d_{x_0}^2}u^2\diff v_g,\label{new-estimate}
	\end{align}
where $\gamma_{n}(\lambda)=\sqrt{(n-1)^2-4\lambda}$ and $h_{n}(\lambda)=\frac{\gamma_{n}(\lambda)+1}{2}$.
 \end{theorem}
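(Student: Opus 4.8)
The plan is to invoke Theorem~\ref{Riccati-theorem-0} in the case $p=2$ on the domain $\Omega=M\setminus\{x_0\}$, taking $\rho=d_{x_0}$ (so that $|\nabla_g\rho|=1$ by the eikonal equation \eqref{eikonal}) and the comparison function $L(t)=(n-1){\bf ct}_\kappa(t)=(n-1)\sqrt{-\kappa}\coth(\sqrt{-\kappa}\,t)$ supplied by the Laplace comparison in Theorem~\ref{comparison-theorem}/(I)/(i) under ${\bf K}\le\kappa$. Writing $s=\sqrt{-\kappa}$, $h=h_{n}(\lambda)$ and $\gamma=\gamma_{n}(\lambda)$ for brevity, the weight $W$ is read off as the coefficient of $u^2$ on the right-hand side of \eqref{new-estimate}, namely
\[
W(t)=\lambda s^2+\frac{h^2}{t^2}+s^2\Bigl(\tfrac{(n-2)^2}{4}-h^2\Bigr)\frac{1}{\sinh^2(st)}+h\gamma\,\frac{{\bf D}_\kappa(t)}{t^2}.
\]
The crux is to exhibit an admissible $G$ solving $G'+LG-G^2=W$ on $(0,\infty)$. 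Guided by the fact that the constant ansatz $G\equiv c$ reproduces McKean's estimate and $G(t)=b/t$ reproduces the Hardy term, I would try the combined ansatz
\[
G(t)=\Bigl(\tfrac{n-2}{2}+h\Bigr)s\coth(st)-\frac{h}{t}.
\]

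Next I would verify that $(L,W)$ is a $(2,\rho)$-Riccati pair with this $G$ admissible. Substituting the ansatz and using $\coth^2(st)-1=1/\sinh^2(st)$, the expression $G'+LG-G^2$ organizes into four pieces — a $\coth^2(st)$ term, a $\coth(st)/t$ term, a $1/t^2$ term, and a constant — which I would match against the four analogous pieces of $W$ after rewriting ${\bf D}_\kappa(t)/t^2=s\coth(st)/t-1/t^2$ and $1/\sinh^2(st)=\coth^2(st)-1$. The $\coth^2$-matching forces $\bigl(\alpha-\tfrac{n-2}{2}\bigr)^2=h^2$ for the coefficient $\alpha$ of $s\coth(st)$, whence $\alpha=\tfrac{n-2}{2}+h$; the $\coth/t$-matching then pins the coefficient of $1/t$ to $-h$ via the identity $n-1-2\alpha=-\gamma$; and both the $1/t^2$ and the constant matchings close up \emph{exactly} because $h=\tfrac{\gamma+1}{2}$ together with $\gamma^2=(n-1)^2-4\lambda$. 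Positivity of $G$ follows from $\coth x>1/x$ for $x>0$, giving $G(t)>\tfrac{n-2}{2t}>0$ for $n\ge3$, so no sign issue arises in condition~\ref{it:r2}; the integrability requirement \ref{item:C_0-no-w-0} is immediate since every $u\in C_0^\infty(M\setminus\{x_0\})$ has support bounded away from $x_0$, where $G$ and $G'$ are smooth.

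Finally I would confirm $W(t)>0$ as the Riccati-pair definition demands: its first two summands are strictly positive, ${\bf D}_\kappa(t)>0$ for $t>0$, and $\tfrac{(n-2)^2}{4}-h^2\ge0$ precisely when $h\le\tfrac{n-2}{2}$, i.e.\ when $\lambda\ge n-2$; this is exactly the role of the lower endpoint of the parameter interval, while $\lambda\le\tfrac{(n-1)^2}{4}$ merely guarantees that $\gamma$ is real. With these checks in place, Theorem~\ref{Riccati-theorem-0} yields \eqref{new-estimate} for all $u\in C_0^\infty(M\setminus\{x_0\})$, with no capacity extension needed since the conclusion is stated on that punctured class. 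I expect the main (purely computational) obstacle to be the exact reconciliation of the four terms in $G'+LG-G^2=W$; the cleanest route is to treat $\coth^2(st)$, $\coth(st)/t$, $1/t^2$ and the constant as linearly independent and solve the resulting four scalar identities, verifying that the single relation $h=\tfrac{\gamma+1}{2}$ renders all of them simultaneously consistent.
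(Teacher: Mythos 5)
Your proposal is correct and takes essentially the same route as the paper: the paper's proof applies Theorem~\ref{Riccati-theorem-0} on $M\setminus\{x_0\}$ with $\rho=d_{x_0}$, $L(t)=(n-1){\bf ct}_\kappa(t)$, the same $W_\lambda$, and exactly your admissible function, $G(t)=-\frac{h_n(\lambda)}{t}+\left(\frac{n-2}{2}+h_n(\lambda)\right){\bf ct}_\kappa(t)$, presented there as the fundamental solution of the Riccati ODE. Your four-term matching (using $2h-1=\gamma$ and $\gamma^2=(n-1)^2-4\lambda$), the bound $G(t)>\frac{n-2}{2t}>0$ via $\coth x>1/x$, and the observation that $\lambda\ge n-2$ is what makes the $\sinh^{-2}$ coefficient nonnegative simply spell out the verifications the paper records as ``immediate'' or ``easily observed.''
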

\begin{proof} Fix $\lambda\in [n-2,\frac{(n-1)^2}{4}]$. In Theorem \ref{Riccati-theorem-0} choose $\Omega=M\setminus \{x_0\}$, $\rho=d_{x_0}>0$, as well as
	\begin{align*}
		L(t)&={(n-1)}{\bf ct}_\kappa(t)=(n-1)\sqrt{-\kappa}\coth(\sqrt{-\kappa} t),\quad \forall t>0,\\
		W_\lambda(t)&=\lambda |\kappa| +\frac{h_{n}^2(\lambda)}{t^2}+\frac{|\kappa|}{\sinh^2(\sqrt{-\kappa}t)}\left(\frac{(n-2)^2}{4}-h_{n}^2(\lambda)\right)+h_{n}(\lambda)\gamma_{n}(\lambda) \frac{{\bf D}_\kappa(t)}{t^2},\quad \forall t>0.
	\end{align*}
	It is immediate that $W_\lambda$ is positive. The fundamental solution of the Riccati equation
	\begin{equation}\label{eq:ric:interpolation}
		G'(t)+L(t)G(t)-{G}(t)^2=W_\lambda(t),\quad \forall t>0,
	\end{equation}
	is given by
	$$G(t)=-\frac{h_n(\lambda)}{t}+\left(\frac{n-2}{2}+h_n(\lambda)\right){\bf ct}_\kappa(t),\quad \forall t>0.$$
	We easily observe that $G$ is positive on $(0,\infty)$ and \ref{item:C_0-no-w-0} holds. Since $\Delta_g \rho\geq L(\rho)$, see Theorem \ref{comparison-theorem}/(I)/(i), it turns out that $(L,W_\lambda)$ is a $(2,\rho)$-Riccati pair on $(0,\infty)$. Thus Theorem \ref{Riccati-theorem-0}  implies
	 inequality \eqref{new-estimate} for every
	 $u\in C_0^{\infty}(M\setminus \{x_0\})$ .
\end{proof}

 A direct consequence of Theorem \ref{interpolation-theorem} can be stated as follows for the two marginal values of $\lambda$, i.e., for  $\lambda=n-2$ a  Hardy improvement holds, while for $\lambda=\frac{(n-1)^2}{4}$ the McKean spectral gap is improved:

 \begin{corollary} \label{corollary-berchio-etal}	Let $(M,g)$ be an  $n$-dimensional Cartan-Hadamard manifold $(n\geq 3),$  with sectional curvature ${\bf K}\leq  \kappa <0$, and $x_0\in M$. Then, for every  $u\in C_0^\infty(M\setminus \{x_0\})$ one has
\begin{itemize}
	\item[{\rm (i)}] {\rm (Hardy improvement)}
		\begin{align}\label{new-estimate-0}
			\int_M |\nabla_g u|^2\diff v_g\geq\frac{(n-2)^2}{4} \int_M \frac{u^2}{d_{x_0}^2}\diff v_g
			 +(n-2) {|\kappa|}\int_M u^2\diff v_g+ \frac{(n-2)(n-3)}{2}\int_M \frac{{\bf D}_\kappa(d_{x_0})}{d_{x_0}^2}u^2\diff v_g.
	\end{align}
		\item[{\rm (ii)}] {\rm (McKean spectral gap improvement)}
			\begin{align}\label{new-estimate-1}
			\int_M |\nabla_g u|^2\diff v_g\geq\frac{(n-1)^2}{4}|\kappa| \int_M {u^2}\diff v_g + \frac{1}{4} \int_M \frac{u^2}{d_{x_0}^2}\diff v_g+|\kappa|\frac{(n-1)(n-3)}{4}\int_M \frac{u^2}{\sinh^2(\sqrt{-\kappa}d_{x_0})}\diff v_g.
		\end{align}
\end{itemize}
 \end{corollary}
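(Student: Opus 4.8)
The plan is to obtain both inequalities as the two \emph{endpoints} of the one-parameter family established in Theorem \ref{interpolation-theorem}, simply by inserting the extremal admissible values $\lambda = n-2$ and $\lambda = \frac{(n-1)^2}{4}$ and evaluating the auxiliary quantities $\gamma_{n}(\lambda)$ and $h_{n}(\lambda)$. As a preliminary check I would confirm that both values lie in the admissible range $[n-2,\frac{(n-1)^2}{4}]$; this reduces to $4(n-2)\leq (n-1)^2$, i.e.\ $(n-3)^2\geq 0$, which always holds (and confirms that the interval is nonempty precisely for $n\geq 3$).

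For part (i) I would set $\lambda = n-2$ and compute $\gamma_{n}(n-2)=\sqrt{(n-1)^2-4(n-2)}=\sqrt{(n-3)^2}=n-3$ (using $n\geq 3$ to select the nonnegative root), whence $h_{n}(n-2)=\frac{n-2}{2}$ and $h_{n}^2(n-2)=\frac{(n-2)^2}{4}$. Substituting into \eqref{new-estimate}, the coefficient $\frac{(n-2)^2}{4}-h_{n}^2(\lambda)$ in front of the $\sinh^{-2}$ integral vanishes identically, leaving exactly the three terms of \eqref{new-estimate-0} with constants $\frac{(n-2)^2}{4}$, $(n-2)|\kappa|$, and $h_{n}\gamma_{n}=\frac{(n-2)(n-3)}{2}$.

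For part (ii) I would set $\lambda = \frac{(n-1)^2}{4}$, which forces $\gamma_{n}(\lambda)=0$ and hence $h_{n}(\lambda)=\frac{1}{2}$, $h_{n}^2(\lambda)=\frac{1}{4}$. Now it is the final term $h_{n}(\lambda)\gamma_{n}(\lambda)$ weighting the ${\bf D}_\kappa$ integral that drops out; the remaining coefficients reduce to $\frac{(n-1)^2}{4}|\kappa|$, $\frac{1}{4}$, and, after the elementary simplification $(n-2)^2-1=(n-1)(n-3)$, the constant $|\kappa|\frac{(n-1)(n-3)}{4}$ in front of the $\sinh^{-2}$ term, recovering \eqref{new-estimate-1}.

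Since everything is a direct specialization of an already proven inequality, I do not anticipate a genuine obstacle; the only care required is the bookkeeping of the four coefficients and the observation that at each endpoint exactly one of them collapses to zero — the $\sinh^{-2}$ term at $\lambda=n-2$ and the ${\bf D}_\kappa$ term at $\lambda=\frac{(n-1)^2}{4}$. This structural vanishing is precisely what makes the two endpoints interpretable respectively as a \emph{Hardy improvement} and a \emph{McKean spectral gap improvement}.
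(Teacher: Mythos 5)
Your proof is correct and is exactly the paper's intended argument: the corollary is stated as a direct consequence of Theorem \ref{interpolation-theorem} at the two marginal values $\lambda=n-2$ and $\lambda=\frac{(n-1)^2}{4}$, and your coefficient computations (including $\gamma_n(n-2)=n-3$, $h_n(n-2)=\frac{n-2}{2}$, the simplification $(n-2)^2-1=(n-1)(n-3)$, and the vanishing of exactly one term at each endpoint) all check out. One negligible slip in your parenthetical aside: since $(n-3)^2\ge 0$ holds for every $n$, the interval $\left[n-2,\frac{(n-1)^2}{4}\right]$ is nonempty for all $n\ge 2$, so the restriction $n\ge 3$ is inherited from the hypothesis of Theorem \ref{interpolation-theorem} itself rather than from nonemptiness of the parameter range.
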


\begin{remark}\label{rem:criticality} (a)
	Inequality \eqref{new-estimate-0} appears in Berchio,  Ganguly, Grillo and Pinchover \cite[Corollary 4.3]{Berchio-Royal}. In addition,
	by this inequality and the fact that $$t^2\geq t\coth t-1,\quad\forall t>0,$$ we obtain the following (see Krist\'aly \cite[Theorem 4.1]{Kristaly-JMPA}):  for every  $u\in C_0^\infty(M\setminus \{x_0\})$ one has
	$$	\int_M |\nabla_g u|^2\diff v_g\geq\frac{(n-2)^2}{4} \int_M \frac{u^2}{d_{x_0}^2}\diff v_g
	+ \frac{(n-1)(n-2)}{2}\int_M \frac{{\bf D}_\kappa(d_{x_0})}{d_{x_0}^2}u^2\diff v_g.$$
	 Inequality \eqref{new-estimate-1} appears first in Akutagawa and Kumura \cite[Theorem 1.3/(6)]{Japanok}, which became a starting point of further studies, see e.g.\ Berchio,  Ganguly and  Grillo \cite[Theorem 2.1]{Berchio-JFA}, Berchio,  Ganguly,  Grillo and Pinchover \cite{Berchio-Royal}, and  Flynn,  Lam and  Lu \cite{Flynn-JFA}.

	 (b) We note that inequalities from Theorem~\ref{interpolation-theorem} and Corollary~\ref{corollary-berchio-etal} are known to be \textit{critical} on \(\mathbb{H}_{-1}^n\), i.e.,  the right hand sides of the inequalities cannot be improved with  weights that are strictly larger somewhere, see  Devyver, Fraas and Pinchover ~\cite[Definition~2.1]{DFP}. The criticality proofs are formulated using the supersolutions approach, however by Remarks~\ref{bessel-pair-generalization}~\&~\ref{rem:supersolution}, they can be adapted to Riccati pairs.

	To see this, let us consider~\eqref{new-estimate-1} on \(\mathbb{H}_{-1}^n\) and recall the definition of \(W_\lambda\) from the proof of Theorem~\ref{interpolation-theorem}. As we learned from~\cite{Berchio-JFA}, in order to prove the criticality, it is enough to show that the equation 
	\begin{equation}\label{eq:criticality}
		\left(-\Delta_g-W_\frac{(n-1)^2}{4}\right)\varphi(x)=0, \quad\forall x\in \mathbb{H}_{-1}^n\setminus\{x_0\},
	\end{equation}
	admits two solutions \(\varphi_\pm(x)=y_\pm(d_{x_0}(x))\) satisfying
	\(\lim_{r\to\infty}\frac{y_+(r)}{y_-(r)}=0.\) Moreover, by Remark~\ref{rem:supersolution} it is enough to consider the ODE~\eqref{Bessel-pair-modified} from Remark~\ref{bessel-pair-generalization}. By simple computation we obtain that the following linearly independent functions meet the above conditions:
	\[y_+(t)=\sqrt{t}\sinh(t)^\frac{2-n}{2}\quad\mbox{and}\quad y_-(t)=\sqrt{t}\log(t)\sinh(t)^\frac{2-n}{2},\quad \forall t>0.\]
	We note, however, that the above functions can be also obtained from two linearly independent solutions \(G_\pm\) of the Riccati ODE~\eqref{eq:ric:interpolation} via the mechanism \(G_\pm(t)=-\frac{y_\pm'(t)}{y_\pm(t)}\), see Proposition~\ref{Bessel-Riccati}. Using additional elements from the criticality theory, one can also adapt the criticality proof of the general inequality~\eqref{new-estimate}.\medskip

	(c) We notice that Berchio,  Ganguly and  Grillo ~\cite[Theorem 2.5]{Berchio-JFA} provided a more general version of inequality~\eqref{new-estimate-1} under a mild curvature assumption, which can be stated as follows. Let \((M,g)\) be an $n$-dimensional Riemannian manifold (\(n\ge 3\)), \(x_0\in M\) be a point such that \(\operatorname{Cut}_{x_0}=\emptyset\) (here, \(\operatorname{Cut}_{x_0}\) stands for the  cut locus of $x_0$), and the sectional curvature in the radial direction satisfies 
	\begin{equation}\label{eq:pointvise:curvature}
		{\bf K}_\text{rad}(d_{x_0}(x))\le -\frac{\psi''(d_{x_0}(x))}{\psi(d_{x_0}(x))},\quad\forall x\in M,
	\end{equation}
	where \(\psi\) is a positive, increasing \(C^2\) function with \(\psi(0)=\psi''(0)=0\), \(\psi'(0)=1\) and 
	\begin{equation}\label{eq:poscon}
		(n-2)\psi'(t)+(n-1)r\psi''(t)\ge 0,\quad\forall t>0.
	\end{equation}
	Then for every \(u\in C_0^\infty(M\setminus \{x_0\})\) one has 
	\begin{align}
		\int_M |\nabla_g u|^2\diff v_g&\geq\frac{(n-1)}{4}\int_M \left(2\frac{\psi''(d_{x_0})}{\psi(d_{x_0})}+(n-3)\frac{(\psi'(d_{x_0})^2-1)}{\psi(d_{x_0})^2}\right){u^2}\diff v_g\nonumber\\
		&\qquad + \frac{1}{4} \int_M \frac{u^2}{d_{x_0}^2}\diff v_g+\frac{(n-1)(n-3)}{4}\int_M \frac{u^2}{\psi^2(d_{x_0})}\diff v_g.\label{new-estimate-2}
	\end{align}
	Observe that for $\kappa<0$ and \(\psi(t)=\frac{\sinh(\sqrt{-\kappa}t)}{\sqrt{-\kappa}}\) the above inequality reduces to  \eqref{new-estimate-1}.

	As expected, inequality~\eqref{new-estimate-2} can be obtained via  Riccati pairs as well. Indeed, the curvature assumption~\eqref{eq:pointvise:curvature} implies the following refined Laplace comparison principle
	\[\Delta d_{x_0}\le (n-1)\frac{\psi'(d_{x_0})}{\psi(d_{x_0})},\]
	see Greene and Wu~\cite{Greene}. Thus inequality~\eqref{new-estimate-2} follows from Theorem~\ref{W-theorem} by choosing \[L(t)= (n-1)\frac{\psi'(t)}{\psi(t)}\quad\mbox{and}\quad G(t)=-\frac{1}{2t}+\frac{(n-1)}{2}\frac{\psi'(t)}{\psi(t)},\] 
	the positivity of \(G\) being guaranteed by the boundary conditions on \(\psi\) and relation~\eqref{eq:poscon}.  

	This example shows that Riccati pairs can be efficiently used to extend well-known Hardy inequalities to manifolds satisfying -- instead of a universal curvature bound -- the \emph{pointwise} curvature assumption~\eqref{eq:pointvise:curvature}.
	
\end{remark}

We conclude this subsection with a short proof of the following inequality, see Akutagawa and Kumura \cite[Theorem 1.3/(5)]{Japanok}.
\begin{theorem}\label{japanok-cikke-alapjan}
	Let $(M,g)$ be an  $n$-dimensional Cartan-Hadamard manifold with $n\geq 2$ and  sectional curvature ${\bf K}\leq  \kappa <0$. Let $x_0\in M$,  $R>0$ and $\Omega=M\setminus B_{x_0}(R)$. Then for every  and  $u\in C_0^\infty(\Omega)$ one has
	\begin{align*}\label{japan-estimate}
		\nonumber	\int_{\Omega}|\nabla_g u|^2\diff v_g&\geq \int_{\Omega}\left[\frac{(n-1)^2}{4}|\kappa|  +  \frac{1}{4\left(d_{x_0}-R+\frac{1}{(n-1){\bf ct}_\kappa(R)}\right)^2}+|\kappa|\frac{(n-1)(n-3)}{4\sinh^2(\sqrt{-\kappa}d_{x_0})}\right]{u^2} \diff v_g.
	\end{align*}
\end{theorem}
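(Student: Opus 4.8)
The plan is to recognize this inequality as a direct consequence of the Riccati-pair machinery, namely Theorem \ref{Riccati-theorem-0} applied with $p=2$. I would take $\rho=d_{x_0}$ on $\Omega=M\setminus B_{x_0}(R)$; since $(M,g)$ is Cartan-Hadamard, $x_0\notin\Omega$ and $\rho$ is smooth and positive on $\Omega$ with $|\nabla_g\rho|=1$ by the eikonal equation \eqref{eikonal}, its range being $(R,\infty)$. Motivated by the McKean-type improvement \eqref{new-estimate-1} (whose admissible function is $-\tfrac{1}{2t}+\tfrac{n-1}{2}{\bf ct}_\kappa(t)$), I would replace the pure Hardy pole $\tfrac{1}{2t}$ by a shifted one adapted to the inner boundary sphere, setting
\[L(t)=(n-1){\bf ct}_\kappa(t), \qquad G(t)=-\frac{1}{2(t-R+c_0)}+\frac{n-1}{2}{\bf ct}_\kappa(t), \qquad c_0=\frac{1}{(n-1){\bf ct}_\kappa(R)},\]
on $(R,\infty)$, and let $W:=G'+LG-G^2$. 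The goal is then to show that $(L,W)$ is a $(2,\rho)$-Riccati pair, that $W$ equals the bracketed weight in the statement, and that the sign requirement \ref{it:r2} on $G$ holds.

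The Laplace comparison, Theorem \ref{comparison-theorem}/(I)/(i), immediately gives $\Delta_g\rho\ge (n-1){\bf ct}_\kappa(\rho)=L(\rho)$ on $\Omega$. For the weight identity I would compute $G'+LG-G^2$ directly, using ${\bf ct}_\kappa'(t)=-|\kappa|/\sinh^2(\sqrt{-\kappa}t)$ and ${\bf ct}_\kappa(t)^2=|\kappa|+|\kappa|/\sinh^2(\sqrt{-\kappa}t)$. The cross terms $\pm\tfrac{n-1}{2(t-R+c_0)}{\bf ct}_\kappa$ cancel, and a short calculation yields exactly
\[W(t)=\frac{(n-1)^2}{4}|\kappa|+\frac{1}{4\left(t-R+c_0\right)^2}+|\kappa|\frac{(n-1)(n-3)}{4\sinh^2(\sqrt{-\kappa}t)},\]
which is the claimed weight with $t=d_{x_0}$; note the Riccati relation holds with equality, so \ref{it:r3} is satisfied.

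The main obstacle is verifying the sign condition $G\ge 0$ on $(R,\infty)$ required by \ref{it:r2}, since on a generic Cartan-Hadamard manifold the Laplace comparison may be strict on a set of positive measure. Writing $2G(t)=(n-1){\bf ct}_\kappa(t)-\tfrac{1}{t-R+c_0}$, the inequality $G\ge 0$ is equivalent to $(n-1){\bf ct}_\kappa(t)\,(t-R+c_0)\ge 1$. Setting $g(t)=\tfrac{1}{(n-1){\bf ct}_\kappa(t)}=\tfrac{\tanh(\sqrt{-\kappa}t)}{(n-1)\sqrt{-\kappa}}$, this reduces (since $c_0=g(R)$) to $g(t)-g(R)\le t-R$ for $t\ge R$. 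The key observation is that $g'(t)=\tfrac{1}{(n-1)\cosh^2(\sqrt{-\kappa}t)}\le\tfrac{1}{n-1}\le 1$ for $n\ge 2$, so integrating from $R$ to $t$ gives the claim; in particular $G(R)=0$ and $G>0$ for $t>R$.

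Finally, the integrability condition \ref{item:C_0-no-w-0} holds since $G$ and $G'$ are smooth and bounded on compact subsets of $(R,\infty)$ while $\rho$ is smooth on $\Omega$; moreover, as every $u\in C_0^\infty(\Omega)$ has support on which $\rho$ takes values in a compact subset of $(R,\infty)$, the singularity of $G$ at $t=R-c_0$ is irrelevant and the Riccati identity is only invoked on $(R,\infty)$. Thus $(L,W)$ is a $(2,\rho)$-Riccati pair and Theorem \ref{Riccati-theorem-0} yields the asserted inequality. For $n=2$ the $\sinh^{-2}$ term makes $W$ change sign, but since the proof of Theorem \ref{Riccati-theorem-0} only uses the Riccati identity together with $G\ge 0$, the conclusion persists; alternatively one invokes the additive form \eqref{additive-no-weight-0} directly with the admissible $G$.
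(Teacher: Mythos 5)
Your proposal is correct and is essentially the paper's own proof: identical choices $\rho=d_{x_0}$, $L(t)=(n-1){\bf ct}_\kappa(t)$, $G(t)=-\frac{1}{2(t-R+c_0)}+\frac{n-1}{2}{\bf ct}_\kappa(t)$ with $c_0=\frac{1}{(n-1){\bf ct}_\kappa(R)}$, the same Riccati identity $G'+LG-G^2=W$, and the same appeal to Theorem \ref{Riccati-theorem-0} via the Laplace comparison -- and you moreover supply the positivity of $G$ (via $g'(t)=\frac{1}{(n-1)\cosh^2(\sqrt{-\kappa}t)}\le 1$), which the paper only asserts with ``one can check''. Your closing caveat for $n=2$ is overly cautious: $W$ does not actually change sign there, since $t-R+c_0\le t$ and $\sinh(\sqrt{-\kappa}t)\ge \sqrt{-\kappa}\,t$ give $\frac{1}{4(t-R+c_0)^2}>\frac{|\kappa|}{4\sinh^2(\sqrt{-\kappa}t)}-\frac{|\kappa|}{4}$, so $W>0$ on $(R,\infty)$ for all $n\ge 2$; in any case your fallback to the additive form is harmless.
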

\begin{proof}
	The proof is similar to the one from Theorem \ref{interpolation-theorem}, except the choice of $W\colon(R,\infty)\to \mathbb R$ which is defined by
	$$W(t)=\frac{(n-1)^2}{4}|\kappa|  +  \frac{1}{4\left(t-R+\frac{1}{(n-1){\bf ct}_\kappa(R)}\right)^2}+|\kappa|\frac{(n-1)(n-3)}{4\sinh^2(\sqrt{-\kappa}t)},\quad t>R.$$
	One can check that the function
	$$G(t)=-\frac{1}{2\left(t-R+\frac{1}{(n-1){\bf ct}_\kappa(R)}\right)}+\frac{n-1}{2}{\bf ct}_\kappa(t),\quad t>R,$$
	is positive and verifies both \ref{item:C_0-no-w-0} and the ODE
	$$	G'(t)+L(t)G(t)-{G}(t)^2=W(t),\quad t>R,$$
	where $\rho=d_{x_0}$ and  $L(t)=(n-1){\bf ct}_\kappa(t)$;  thus $(L,W)$ is a $(2,\rho)$-Riccati pair on $(R,\infty)$, and the claim follows from Theorem
	\ref{Riccati-theorem-0}.
\end{proof}

\subsection{Ghoussoub-Moradifam-type inequalities}\label{gh-morad-subsection}

In this section we provide an alternative proof  of some inequalities established by Ghoussoub and Moradifam \cite[Theorem 2.12]{GM-Annalen}, see also \cite{GM-book},  where the weights are of the form $(a+b|x|^\alpha)^\beta/|x|^{2m}$ for some parameters. Possible extensions of these inequalities to Cartan-Hadamard manifolds will be also discussed (see Remark \ref{rem-gh-m} and Theorem \ref{0-Gh-Moradifam-theorem-0}), where some technical difficulties arise.

\begin{theorem}\label{Gh-Moradifam-theorem} {\rm (see \cite[Theorem 2.12]{GM-Annalen})}
	 Let $a,b>0$ and $\alpha,\beta,m\in  \mathbb R.$ The following inequalities hold$:$
	\begin{itemize}
		\item[{\rm (i)}] If $\alpha\beta>0$ and $m\leq \frac{n-2}{2}$, then for every $u\in C_0^\infty(\mathbb R^n)$ one has
		\begin{equation}\label{Gh-Moradifam-elso-egyenlet}
			\int_{\mathbb R^n} \frac{(a+b|x|^\alpha)^\beta}{|x|^{2m}}|\nabla u|^2\diff x\geq \left(\frac{n-2m-2}{2}\right)^2\int_{\mathbb R^n} \frac{(a+b|x|^\alpha)^\beta}{|x|^{2m+2}}u^2\diff x.
		\end{equation}
		\item[{\rm (ii)}] If $\alpha\beta<0$ and $2m-\alpha\beta\leq {n-2}$, then for every $u\in C_0^\infty(\mathbb R^n)$ one has
	\begin{equation}\label{Gh-Moradifam-masodik-egyenlet}
		\int_{\mathbb R^n} \frac{(a+b|x|^\alpha)^\beta}{|x|^{2m}}|\nabla u|^2\diff x\geq \left(\frac{n-2m+\alpha\beta-2}{2}\right)^2\int_{\mathbb R^n} \frac{(a+b|x|^\alpha)^\beta}{|x|^{2m+2}}u^2\diff x.
	\end{equation}
	\end{itemize}
\end{theorem}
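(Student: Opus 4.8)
The plan is to derive both inequalities as instances of the weighted Riccati pair machinery of Theorem~\ref{W-theorem}, exploiting that in $\mathbb{R}^n$ the Laplace comparison (Theorem~\ref{comparison-theorem}/(I)/(i)) holds with equality. Concretely, I take $\Omega=\mathbb{R}^n\setminus\{0\}$, $p=2$, $\rho(x)=|x|$ (so that $|\nabla_g\rho|=1$ and $\Delta_g\rho=(n-1)/\rho$ exactly), the weight $w(t)=(a+bt^\alpha)^\beta t^{-2m}$, the comparison function $L(t)=(n-1)/t$, and $W(t)=C/t^2$, where $C$ denotes the square constant on the right-hand side of (i), respectively (ii). With these choices $W(\rho)w(\rho)=C(a+b|x|^\alpha)^\beta/|x|^{2m+2}$, so the conclusion \eqref{W-Reduced} of Theorem~\ref{W-theorem} is precisely \eqref{Gh-Moradifam-elso-egyenlet}, respectively \eqref{Gh-Moradifam-masodik-egyenlet}. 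Because equality holds in the Laplace comparison, Remark~\ref{remark-sign} guarantees that no sign restriction on $G$ is needed in condition \ref{it:wr2}; moreover condition \ref{it:wr1} is automatic, since on compact subsets of $\Omega$ the function $\rho$ stays bounded away from the origin, making all the relevant integrands continuous and bounded. (When $C=0$, i.e.\ in the boundary cases $m=(n-2)/2$ or $2m-\alpha\beta=n-2$, the asserted inequality is trivial.)

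The heart of the matter is to produce a $w$-admissible $G$, for which I use the ansatz $G(t)=\gamma/t$. A short computation of the logarithmic derivative of the weight gives $w'(t)/w(t)=-2m/t+\alpha\beta\,\phi(t)/t$ with $\phi(t):=bt^\alpha/(a+bt^\alpha)\in(0,1)$, whence the left-hand side of the Riccati inequality \ref{it:wr3} reduces to
\[
G'(t)+\left(\frac{w'(t)}{w(t)}+L(t)\right)G(t)-G(t)^2=\frac{1}{t^2}\Big[\gamma(n-2-2m)-\gamma^2+\gamma\,\alpha\beta\,\phi(t)\Big].
\]
For part (i), where $\alpha\beta>0$ and $\gamma>0$, the correction term $\gamma\,\alpha\beta\,\phi(t)$ is positive and may simply be discarded; maximizing the quadratic $\gamma(n-2-2m)-\gamma^2$ in $\gamma$ selects $\gamma=(n-2-2m)/2\ge 0$ (using $m\le(n-2)/2$) and produces the constant $\left(\tfrac{n-2m-2}{2}\right)^2$. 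For part (ii), where $\alpha\beta<0$, I instead bound the correction from below via $0<\phi(t)<1$, namely $\gamma\,\alpha\beta\,\phi(t)\ge\gamma\,\alpha\beta$; the residual quadratic $\gamma(n-2-2m+\alpha\beta)-\gamma^2$ is maximized at $\gamma=(n-2m+\alpha\beta-2)/2\ge 0$ (using $2m-\alpha\beta\le n-2$), yielding $\left(\tfrac{n-2m+\alpha\beta-2}{2}\right)^2$. In either case $(L,W)$ is a $(2,\rho,w)$-Riccati pair in $(0,\infty)$, and Theorem~\ref{W-theorem} applies on $C_0^\infty(\Omega)$.

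It then remains to upgrade the inequality from $C_0^\infty(\mathbb{R}^n\setminus\{0\})$ to $C_0^\infty(\mathbb{R}^n)$. Since $\rho^{-1}(0)=\{0\}$ and $\mathcal H^{n-2}(\{0\})<\infty$ for $n\ge 2=p$, the point has zero $2$-capacity, so the extension follows from a cutoff/approximation argument exactly as in Remark~\ref{remark-capacity}. I expect the verification of \ref{it:wr3} to be completely routine; the one genuinely delicate step is the sign/monotonicity analysis of the correction term $\gamma\,\alpha\beta\,\phi(t)$ — in particular the replacement of $\phi(t)$ by its extremal value $1$ when $\alpha\beta<0$ — as this is precisely the mechanism that turns the two parameter hypotheses $\alpha\beta>0$ and $\alpha\beta<0$ into two different sharp constants. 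A secondary technical point worth checking is that the capacity argument at the origin is unaffected by the singular factor $|x|^{-2m}$ in the weight.
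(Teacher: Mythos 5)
Your proof is correct, but it takes a genuinely different --- and considerably more elementary --- route than the paper's. The paper first shows that (i) and (ii) are equivalent via the substitution $(a,b,\alpha,m)\mapsto(b,a,-\alpha,m-\frac{\alpha\beta}{2})$, and then proves (i) by solving the Riccati ODE \eqref{Gh-Mor-eq} \emph{exactly}: its admissible $G$, given in \eqref{Gh-Mor}, is a quotient of regularized Gaussian hypergeometric functions, and the delicate step is the nonvanishing of the denominator, handled via the Euler--Pfaff transformation (no positivity of $G$ is required since $\Delta|x|=(n-1)/|x|$ holds with equality, cf.\ Remark \ref{remark-sign}). You exploit instead the fact that condition \ref{it:wr3} only demands a differential \emph{inequality}, so a sub-solution suffices: your computation $w'(t)/w(t)=\bigl(\alpha\beta\,\phi(t)-2m\bigr)/t$ with $\phi(t)=bt^\alpha/(a+bt^\alpha)\in(0,1)$ is correct, the ansatz $G(t)=\gamma/t$ reduces \ref{it:wr3} to $\gamma(n-2-2m)-\gamma^2+\gamma\alpha\beta\,\phi(t)\ge C$, and bounding $\phi$ by its two extremal values ($\phi>0$ when $\alpha\beta>0$, $\phi<1$ when $\alpha\beta<0$) together with the optimization in $\gamma\ge0$ yields exactly the two constants; these two bounds play precisely the role of the paper's duality step, so you need neither the hypergeometric analysis nor the (i)$\Leftrightarrow$(ii) reduction. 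What the paper's route buys is the \emph{fundamental} solution of the Riccati equation, the natural object for sharpness and criticality questions, which your sub-solution (strict in \ref{it:wr3}) cannot address; what your route buys, beyond brevity, is noteworthy: since your $G=\gamma/t$ is nonnegative, condition \ref{it:wr2} is satisfied on any Cartan--Hadamard manifold with $\rho=d_{x_0}$ and $L(t)=(n-1)/t$, so your argument transfers verbatim and would yield Theorem \ref{0-Gh-Moradifam-theorem-0} \emph{without} the additional hypothesis $\alpha\beta+\sqrt{\alpha\beta(\alpha\beta+2(n-2m-2))}\le 2$, thereby sidestepping the positivity obstruction for the hypergeometric $G$ discussed in Remark \ref{rem-gh-m}. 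Your two flagged technical points resolve favorably: the extension to $C_0^\infty(\mathbb R^n)$ is exactly the (equally unproved) last sentence of the paper's proof, and in every nontrivial case the weight behaves near the origin like $|x|^\sigma$ with $\sigma>2-n$ (namely $\sigma=-2m$ or $\sigma=\alpha\beta-2m$, depending on the sign of $\alpha$), so the cutoff error $\int w(|x|)\,u^2\,|\nabla \eta_\varepsilon|^2\diff x$ over the annulus $\varepsilon^2<|x|<\varepsilon$ vanishes even without the logarithmic gain, while the degenerate boundary cases are trivial since $C=0$.
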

\begin{proof}
	We first prove that (i) and (ii) are equivalent, i.e., they can be deduced from each other. Indeed, let us assume that (i) holds, i.e., by notation's convenience,  if
	$\tilde \alpha\tilde \beta>0$ and $\tilde m\leq \frac{n-2}{2}$, then for every $u\in C_0^\infty(\mathbb R^n)$ one has
	\begin{equation}\label{1-Gh-Moradifam-elso-egyenlet}
		\int_{\mathbb R^n} \frac{(b+a|x|^{\tilde \alpha})^{\tilde \beta}}{|x|^{2{\tilde m}}}|\nabla u|^2\diff x\geq \left(\frac{n-2\tilde m-2}{2}\right)^2\int_{\mathbb R^n} \frac{(b+a|x|^{\tilde\alpha})^\beta}{|x|^{2{\tilde m}+2}}u^2\diff x.
	\end{equation}
	Now, we fix $\alpha,\beta,m\in  \mathbb R$ which verify the assumptions from (ii), i.e., $\alpha\beta<0$ and $2m-\alpha\beta\leq {n-2}$. Let us choose $\tilde \alpha:=-\alpha$, $\tilde \beta:=\beta$ and $\tilde m:=m-\frac{\alpha\beta}{2}$; we observe that $$\tilde \alpha\tilde \beta=-\alpha\beta>0\quad\mbox{and}\quad\tilde m=m-\frac{\alpha\beta}{2}\leq  \frac{n-2+\alpha\beta}{2}-\frac{\alpha\beta}{2}=\frac{n-2}{2}.$$ Thus, we may apply \eqref{1-Gh-Moradifam-elso-egyenlet} with the latter choices, obtaining after a simple computation precisely
	\eqref{Gh-Moradifam-masodik-egyenlet}.  The converse can be performed similarly. Thus, it is enough to prove (i).

	Accordingly, we assume that $\alpha\beta>0$ and $m\leq \frac{n-2}{2}$. Define 
	$$w(t)=\frac{(a+bt^\alpha)^\beta}{t^{2m}},\quad L(t)=\frac{n-1}{t}\quad\mbox{and}\quad W_C(t)=\frac{C}{t^2},\quad \forall t>0,$$
	and for some $C>0$, that will be determined later. Consider the ODE
	\begin{equation}\label{Gh-Mor-eq}
		G'(t)+\left(\frac{w'(t)}{w(t)}+L(t)\right)G(t)-G(t)^2= W_C(t),\quad \forall t>0.
	\end{equation}
	If we introduce the numbers
	$$K_0:=n-2m-2\geq 0\quad\mbox{and}\quad K_1:=n-2m+\alpha\beta-2>0,$$  the fundamental solution of \eqref{Gh-Mor-eq} is given by
	\begin{equation}\label{fundamental-Gh-Mor}
	G(t)=\frac{K_0-\sqrt{K_0^2-4C}}{2t}\cdot\left(1-{\beta}\frac{bt^\alpha}{a}\mathcal I(t)\right),\quad t>0,
	\end{equation}
	where
	$$\mathcal I(t):=\frac{{_2F}_1\left(1+ \frac{\beta}{2}+ \frac{\sqrt{K_0^2-4C}-\sqrt{K_1^2-4C}}{2\alpha},1+ \frac{\beta}{2}+ \frac{\sqrt{K_0^2-4C}+\sqrt{K_1^2-4C}}{2\alpha};2+\frac{\sqrt{K_0^2-4C}}{2\alpha};-\frac{bt^\alpha}{a}\right)}{{_2F}_1\left( \frac{\beta}{2}+ \frac{\sqrt{K_0^2-4C}-\sqrt{K_1^2-4C}}{2\alpha}, \frac{\beta}{2}+ \frac{\sqrt{K_0^2-4C}+\sqrt{K_1^2-4C}}{2\alpha};1+\frac{\sqrt{K_0^2-4C}}{2\alpha};-\frac{bt^\alpha}{a}\right)},$$
	and ${_2F}_1$ stands for the regularized Gaussian hypergeometric function, i.e.,
	for $q,r,s\in \mathbb C$ ($s\notin \mathbb Z_-$),
	\begin{equation}\label{F-ertelmezes}
		{_2F}_1(q,r;s;z)=\frac{1}{\Gamma(1+s)} \sum_{k\geq 0}\frac{(q)_k(r)_k}{(s)_k}\frac{z^k}{k!},\quad \forall |z|<1,
	\end{equation}
	extended by analytic continuation elsewhere, while  $(q)_k=q(q+1)\ldots(q+k-1)=\frac{\Gamma(q+k)}{\Gamma(q)}$ denotes the Pochhammer symbol, where $k \in \mathbb N$.

	In order to have a well-defined  function $G$, we should guarantee first that $$C\leq \frac{1}{4}\min(K_0^2,K_1^2)=\left(\frac{n-2 m-2}{2}\right)^2.$$ Thus, in \eqref{fundamental-Gh-Mor} we put the best possible value, which is $C= \left(\frac{n-2 m-2}{2}\right)^2$. With this choice, the function $G$ reduces to
	\begin{equation}\label{Gh-Mor}
	G(t)=\frac{K_0}{2t}\cdot\left(1-{\beta}\frac{bt^\alpha}{a}\frac{{_2F}_1\left(1+ \frac{\beta}{2}- \frac{\sqrt{K_1^2-4C}}{2\alpha},1+ \frac{\beta}{2}+ \frac{\sqrt{K_1^2-4C}}{2\alpha};2;-\frac{bt^\alpha}{a}\right)}{{_2F}_1\left( \frac{\beta}{2}- \frac{\sqrt{K_1^2-4C}}{2\alpha}, \frac{\beta}{2}+ \frac{\sqrt{K_1^2-4C}}{2\alpha};
		1;-\frac{bt^\alpha}{a}\right)}\right),\ \ t>0.
	\end{equation}
	We  observe that the denominator in the latter expression does not vanish. To see this, let
	\begin{equation}\label{A-B-jeloles}
		A:=\frac{\beta}{2}\ \ {\rm  and}\ \  B:=\frac{\sqrt{K_1^2-4C}}{2\alpha}=\frac{\sqrt{\alpha\beta(\alpha\beta+2K_0)}}{2\alpha} ,
	\end{equation}
	and by using the
	Euler-Pfaff relation from \cite[rel. 15.8.1]{Olver}, we distinguish the following two cases:
	\begin{itemize}
		\item if $A>0$ (thus $\alpha>0$), then $B\geq A>0$ and for every $z>0$  we have 
		$${_2F}_1\left( A-B,A+B;1;-z\right)=(1+z)^{-A-B}{_2F}_1\left( 1-A+B,A+B;1;\frac{z}{1+z}\right)>0;$$
	\item 	if $A<0$ (thus $\alpha<0$), one has $B\leq A<0$, then for every $z>0$ it follows  that 
		$${_2F}_1\left( A-B,A+B;1;-z\right)=(1+z)^{B-A}{_2F}_1\left( A-B,1-A-B;1;\frac{z}{1+z}\right)>0.$$
	\end{itemize}
	Now, we are in the position to  apply Theorem \ref{W-theorem} for $p=2$, $\Omega=\mathbb R^n\setminus\{0\},$ $\rho(x)=|x|>0$ for $x\in \Omega$, as well as  $$w(t)=\frac{(a+bt^\alpha)^\beta}{t^{2m}},\quad L(t)=\frac{n-1}{t}\quad\mbox{and}\quad W(t)=\left(\frac{n-2 m-2}{2t}\right)^2,\quad\forall t>0.$$
	One can easily see that the integrability assumptions from \ref{item:C_0} are verified. By the above arguments it follows that $(L,W)$ is a
	$(2,\rho,w)$-Riccati pair in $(0,\infty)$. Therefore, by  \eqref{W-Reduced} we obtain the validity of  \eqref{Gh-Moradifam-elso-egyenlet}
	for every $u\in C_0^\infty(\mathbb R^n\setminus\{0\})$. It remains to extend \eqref{Gh-Moradifam-elso-egyenlet} to the space $ C_0^\infty(\mathbb R^n)$.
\end{proof}

 \begin{remark} \label{rem-gh-m}
 	One could expect a similar proof on Cartan-Hadamard manifolds as in Theorem \ref{Gh-Moradifam-theorem}. However, a subtle technical difficulty shows up which comes from the fact that -- in spite of several confirming numerical tests -- there is no evidence on the \textit{positiveness} of the function $G$ from \eqref{Gh-Mor} for the full range of parameters.   Note that in the Euclidean case such sign property is \textit{not} necessarily, as   $\Delta|x|=(n-1)/|x|$ for all $x\neq 0$, see Remark \ref{remark-sign} and condition \ref{it:wr2}.

 	
 	Closely related to the latter observation is the fact that $G$ contains the  expression $zf'(z)/f(z)$ for $f(z)={_2F}_1\left( a, b;
 	1;-z\right)$, $z>0$, which is used to characterize the \textit{order of starlikeness} (with respect to zero) of the function $f$, see e.g.\ K\"ustner \cite{Kustner}. Although one can find various results in the literature, to the best of our knowledge, there is no full characterization of the order of  starlikeness for Gaussian hypergeometric functions  with respect to the full spectrum of  parameters. However, under specific constraints on the parameters, we provide the following partial result on Cartan-Hadamard manifolds; for simplicity of presentation, we only focus to the inequality of type \eqref{Gh-Moradifam-elso-egyenlet}; by equivalence, it can be also formulated in terms of its `dual' \eqref{Gh-Moradifam-masodik-egyenlet}.
 \end{remark}

 \begin{theorem}\label{0-Gh-Moradifam-theorem-0}
 	Let $(M,g)$ be an  $n$-dimensional Cartan-Hadamard manifold $(n\geq 2)$, and $x_0\in M$ be a point. Let $a,b,\alpha,\beta>0,$ $m\in  \mathbb R$ with $m\leq \frac{n-2}{2}$ and $\alpha\beta+ \sqrt{\alpha\beta(\alpha\beta+2(n-2m-2))}\leq 2$. Then the following inequality holds for every $u\in C_0^\infty(M)$$:$
 		\begin{equation}\label{0-Gh-Moradifam-elso-egyenlet}
 			\int_M \frac{(a+bd_{x_0}^\alpha)^\beta}{d_{x_0}^{2m}}|\nabla_gu|^2\diff v_g\geq \left(\frac{n-2m-2}{2}\right)^2\int_M \frac{(a+bd_{x_0}^\alpha)^\beta}{d_{x_0}^{2m+2}}u^2\diff v_g.
 		\end{equation}
 \end{theorem}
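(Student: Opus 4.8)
The plan is to transplant the Riccati-pair argument of the Euclidean Theorem~\ref{Gh-Moradifam-theorem} onto the Cartan--Hadamard setting through the Laplace comparison, and to isolate the one genuinely new difficulty, namely the \emph{sign} of the admissible function $G$. First I would apply Theorem~\ref{W-theorem} on $\Omega=M\setminus\{x_0\}$ with $\rho=d_{x_0}$, $p=2$, and exactly the data of the Euclidean proof,
$$w(t)=\frac{(a+bt^\alpha)^\beta}{t^{2m}},\quad L(t)=\frac{n-1}{t},\quad W(t)=\left(\frac{n-2m-2}{2t}\right)^2,$$
together with the candidate $G$ from~\eqref{Gh-Mor} for the optimal value $C=\left(\frac{n-2m-2}{2}\right)^2$. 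Since $M$ is Cartan--Hadamard its sectional curvature is nonpositive, so Theorem~\ref{comparison-theorem}/(I)/(i) with $\kappa=0$ gives $\Delta_g d_{x_0}\ge (n-1)/d_{x_0}=L(\rho)$ distributionally, which is the comparison half of~\ref{it:wr2}. The integrability~\ref{it:wr1} and the Riccati identity~\ref{it:wr3} (with equality, i.e.\ \eqref{Gh-Mor-eq}) are inherited verbatim from the Euclidean computation, since $w,L,W,G$ and the ODE are identical; in particular $f(z):={}_2F_1(A-B,A+B;1;-z)$ remains positive for $z>0$ by the Euler--Pfaff argument already performed there, with $A,B$ as in~\eqref{A-B-jeloles}.

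The decisive new point is that in $\mathbb{R}^n$ one has $\Delta|x|=(n-1)/|x|$ \emph{with equality}, so by Remark~\ref{remark-sign} no sign hypothesis on $G$ is needed; but on a generic Cartan--Hadamard manifold the Laplace comparison is strict on a set of positive $\mathcal H^n$-measure, and then~\ref{it:wr2} forces $G\ge 0$. Hence the entire statement reduces to proving $G(t)\ge 0$ for all $t>0$. Using $\frac{d}{dz}{}_2F_1(q,r;s;-z)=-\tfrac{qr}{s}{}_2F_1(q{+}1,r{+}1;s{+}1;-z)$ together with the identity $B^2-A^2=\tfrac{\beta K_0}{2\alpha}$, where $K_0=n-2m-2$, the bracket in~\eqref{Gh-Mor} collapses to a logarithmic derivative, and one verifies that
$$G(t)\ge 0\quad\Longleftrightarrow\quad \frac{z\,f'(z)}{f(z)}\le \frac{K_0}{\alpha},\qquad z=\frac{b\,t^\alpha}{a}>0,$$
i.e.\ to a uniform upper bound on the quantity governing the \emph{order of starlikeness} of $f$ (cf.\ Remark~\ref{rem-gh-m}).

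The hard part will be exactly this bound, and it is where the hypothesis enters. A large-argument expansion gives $\lim_{z\to\infty}\frac{z f'(z)}{f(z)}=B-A$, and a one-line squaring argument shows $B-A\le K_0/\alpha$ \emph{unconditionally}; the genuine difficulty is that $\frac{z f'(z)}{f(z)}$ may overshoot this limiting value for finite $z$. The stated hypothesis $\alpha\beta+\sqrt{\alpha\beta(\alpha\beta+2(n-2m-2))}\le 2$ is equivalent to $\alpha(A+B)\le 1$, and this is precisely the regime in which a K\"ustner-type starlikeness estimate~\cite{Kustner} yields $\frac{z f'(z)}{f(z)}\le K_0/\alpha$ on all of $(0,\infty)$. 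Concretely I would work with the Pfaff representation $f(z)=(1+z)^{-(A+B)}{}_2F_1(1-A+B,A+B;1;\tfrac{z}{1+z})$ already used in the Euclidean proof, whose hypergeometric factor has nonnegative Taylor coefficients on $(0,1)$, and estimate the resulting logarithmic derivative. As Remark~\ref{rem-gh-m} stresses, no full characterization of the starlikeness order is available over the whole parameter range, and this is exactly why the theorem must be confined to the above constraint.

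Once $G\ge 0$ is secured, conditions~\ref{it:wr1}--\ref{it:wr3} all hold, so $(L,W)$ is a $(2,\rho,w)$-Riccati pair in $(0,\infty)$ and Theorem~\ref{W-theorem} delivers~\eqref{0-Gh-Moradifam-elso-egyenlet} for every $u\in C_0^\infty(M\setminus\{x_0\})$. Finally, since $\rho^{-1}(0)=\{x_0\}$ has vanishing $2$-capacity for $n\ge 2$ (Remark~\ref{remark-capacity}, using $\mathcal H^{n-2}(\{x_0\})<\infty$), the inequality extends to all of $C_0^\infty(M)$, completing the proof.
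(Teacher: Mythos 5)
Your skeleton coincides with the paper's proof: Theorem \ref{W-theorem} on $\Omega=M\setminus\{x_0\}$ with $\rho=d_{x_0}$, the Euclidean data $w,L,W$, the same function $G$ from \eqref{Gh-Mor} solving \eqref{Gh-Mor-eq}, the Laplace comparison for \ref{it:wr2}, and the correct diagnosis that everything reduces to $G\ge0$ (this is exactly where Remark \ref{remark-sign} stops helping outside the model spaces). The genuine gap is at the one step you yourself flag as ``the hard part'': you never prove the uniform bound on $zf'(z)/f(z)$. Checking the limit $\lim_{z\to\infty}zf'(z)/f(z)=B-A$ says nothing about overshoot at finite $z$, and your concrete plan --- take the Pfaff representation, use that its hypergeometric factor has nonnegative Taylor coefficients, and ``estimate the resulting logarithmic derivative'' --- does not deliver the required quantitative bound: nonnegativity of coefficients gives $F'/F\ge0$ but no upper control, and Remark \ref{rem-gh-m} records precisely that no general starlikeness-order characterization of $zf'(z)/f(z)$ is available. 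There is also a computational slip in your reformulation: with $A,B$ from \eqref{A-B-jeloles} and $K_0=n-2m-2$, one has $B^2-A^2=\frac{\beta K_0}{2\alpha}$, so the correct equivalence is
\begin{equation*}
	G(t)\ge 0\quad\Longleftrightarrow\quad \frac{zf'(z)}{f(z)}\le \frac{B^2-A^2}{2A}=\frac{K_0}{2\alpha},\qquad z=\frac{bt^\alpha}{a},
\end{equation*}
i.e.\ half your stated threshold (your squaring check fortunately still passes for the smaller constant, since $\sqrt{\alpha\beta(\alpha\beta+2K_0)}\le \alpha\beta+K_0$, but the slip shows the computation was not carried through).

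The paper closes this step by a different and more economical mechanism that avoids any quantitative starlikeness estimate. Using the contiguous relation
\begin{equation*}
	{_2F}_1\!\left(1+A-B,A+B;1;-z\right)-{_2F}_1\!\left(A-B,A+B;1;-z\right)=-(A+B)\,z\,{_2F}_1\!\left(1+A-B,1+A+B;2;-z\right),
\end{equation*}
the positivity of $G$ becomes $\frac{2A}{A+B}\left(1-\mathcal Q(z)\right)\le 1$, where $\mathcal Q(z)={_2F}_1\!\left(1+A-B,A+B;1;-z\right)/{_2F}_1\!\left(A-B,A+B;1;-z\right)$. Since $B\ge A>0$ gives $\frac{2A}{A+B}\le1$, only the \emph{sign} condition $\mathcal Q(z)\ge0$ is needed, and this is exactly what K\"ustner's Markov-function representation $\mathcal Q(z)=\int_0^1\frac{\diff\mu_0(t)}{1+tz}$ supplies under the parameter restriction encoded in the hypothesis $\alpha\beta+\sqrt{\alpha\beta(\alpha\beta+2K_0)}\le2$; this is the precise point where the hypothesis enters, rather than through a starlikeness bound. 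If you wish to keep your formulation, the same two ingredients in fact yield the missing lemma: $\frac{zf'(z)}{f(z)}=(B-A)\left(1-\mathcal Q(z)\right)\le B-A\le\frac{K_0}{2\alpha}$ for all $z>0$. Your final capacity extension to $C_0^\infty(M)$ via Remark \ref{remark-capacity} is fine and consistent with the paper.
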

\begin{proof}
	We apply Theorem \ref{W-theorem} for $p=2$, $\Omega=M\setminus\{x_0\},$  $\rho=d_{x_0}>0$ in $\Omega$, as well as, 
	$$w(t)=\frac{(a+bt^\alpha)^\beta}{t^{2m}},\quad L(t)=\frac{n-1}{t}\quad\mbox{and}\quad  W(t)=\left(\frac{n-2m-2}{2t}\right)^2,\quad\forall t>0.$$ 
	Note that the function $G$ from \eqref{Gh-Mor} verifies the ODE in \eqref{Gh-Mor-eq}. It is also clear that \ref{item:C_0} holds. Once we are able to prove that $G$ is positive on $(0,\infty)$, see also  Remark \ref{rem-gh-m}, it turns out that $(L,W)$ is a $(2,\rho,w)$-Riccati pair in $(0,\infty)$, and can apply Theorem \ref{W-theorem} concluding the proof of \eqref{0-Gh-Moradifam-elso-egyenlet}.

	To prove the positiveness of $G$ on $(0,\infty)$, it is enough to show that
	$$2Az\cdot \frac{{_2F}_1\left(1+ A-B,1+A+B;2;-z\right)}{{_2F}_1\left( A-B,A+B;1;-z\right)}\leq 1,\quad\forall z>0,$$
	where $A$ and $B$ are from \eqref{A-B-jeloles}; by our assumption, it follows that $B\geq A>0$.
	The definition of the hypergeometric function implies that the latter inequality is equivalent to
	$$\frac{2A}{A+B}\left(1-\frac{{_2F}_1\left(1+ A-B,A+B;1;-z\right)}{{_2F}_1\left( A-B,A+B;1;-z\right)}\right)\leq 1,\quad \forall z>0.$$
	In particular, since $B\geq A>0$, it is enough  to prove that
	$$\frac{{_2F}_1\left(1+ A-B,A+B;1;-z\right)}{{_2F}_1\left( A-B,A+B;1;-z\right)}\geq 0,\quad\forall  z>0.$$
	Since by assumption $\alpha\beta+ \sqrt{\alpha\beta(\alpha\beta+2(n-2m-2))}\leq 2$, we have that $|A\pm B|\leq 1$; in this case, we have the integral representation
	$$\frac{{_2F}_1\left(1+ A-B,A+B;1;-z\right)}{{_2F}_1\left( A-B,A+B;1;-z\right)}=\int_0^1 \frac{1}{1+tz}\diff \mu_0(t),\quad\forall z>0,$$
	where $\mu_0\colon[0,1]\to [0,1]$ is a non-decreasing function satisfying $\mu_0(1)-\mu_0(0)=1,$ see K\"ustner \cite{Kustner}. Clearly, by the latter representation our claim easily follows.
\end{proof}

\begin{remark}
As we already pointed out in Remark \ref{rem-gh-m}, numerical tests confirm the positiveness of $G$ for every $a,b>0$ and $\alpha,\beta,m\in  \mathbb R,$ whose proof requires some specific arguments from the theory of special functions; at this moment, such approach is not available. In particular, we expect to cancel the additional hypothesis  $\alpha\beta+ \sqrt{\alpha\beta(\alpha\beta+2(n-2m-2))}\leq 2$ from Theorem \ref{0-Gh-Moradifam-theorem-0}.
\end{remark}

\section{Applications II: Multiplicative Hardy-type inequalities}\label{section-uncertainty}
\subsection{Sharp uncertainty principles}\label{subsection-uncertainty-1}

Let $(M,g)$ be a complete, non-compact $n$-dimensional Riemannian manifold $(n\geq 2),$ and $x_0\in M$ be fixed. Given $p,\alpha\in \mathbb R$ such that $n>p>1$ and $-p+1<\alpha\leq 1$,
in this section we investigate the \textit{uncertainty principle}: for every  $u\in C_0^\infty(M)$,
\begin{equation}\label{bizonytalansag}
	\left(\int_M	|\nabla_gu|^{p} \diff v_g\right)^\frac{1}{p}\left(\int_Md_{x_0}^{p'\alpha}|u|^p\diff v_g\right)^\frac{1}{p'}\geq\frac{n+\alpha-1}{p} \int_M\left(1+\frac{n-1}{n+\alpha-1}{\bf D}_\kappa(d_{x_0})\right)d_{x_0}^{\alpha-1}|u|^p\diff v_g.\tag{{\bf UP$_\kappa$}}
\end{equation}

We observe that \eqref{bizonytalansag} formally reduces to the:
\begin{itemize}
	\item \textit{Heisenberg-Pauli-Weyl uncertainty principle}, whenever $\alpha=1$; see Kombe and \"Ozaydin \cite{KO-2009, KO-2013} for $p=2$ and $\kappa=0$,  Krist\'aly \cite{Kristaly-JMPA} for $p=2$ and $\kappa\leq 0$, and Nguyen \cite{Nguyen-Royal-CKN} for generic $p>1$ and $\kappa\leq 0$;
	\item \textit{Hydrogen uncertainty principle}, whenever $\alpha=0$; see Cazacu,  Flynn and  Lam \cite{Cazacu-JFA} and Frank \cite{Frank} in $\mathbb R^n$ (thus $\kappa=0$);
	\item \textit{Hardy inequality} in the limit case $\alpha\to -p+1$, see Corollary \ref{Hardy-alpha}.
\end{itemize}

Our first result shows the validity of \eqref{bizonytalansag} on Cartan-Hadamard manifolds; namely, we have:

\begin{theorem} \label{Uncertainty-theorem}	Let $(M,g)$ be an $n$-dimensional Cartan-Hadamard manifold $(n\geq 2),$ such that ${\bf K}\leq \kappa\leq 0$. If $n>p>1$ and $-p+1<\alpha\leq 1$, then \eqref{bizonytalansag} holds and the constant $\frac{n+\alpha-1}{p}$ is sharp.
%
%
\end{theorem}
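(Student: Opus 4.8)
The plan is to read off \eqref{bizonytalansag} directly from the multiplicative form \eqref{multiplicative-no-weight-0} of Theorem~\ref{theorem-main-0}/(ii), applied on the punctured manifold $\Omega=M\setminus\{x_0\}$ with $\rho=d_{x_0}$ and with the choices $G(t)=t^{\alpha}$ and $H(s)=\frac{|s|^p}{p}$. These are forced by the shape of the inequality: since $H'(s)=|s|^{p-2}s$ one has $|H'(s)|^{p'}=|s|^{p}$ and $|G(\rho)|^{p'}=\rho^{p'\alpha}$, so the denominator of \eqref{multiplicative-no-weight-0} becomes exactly $\int_M d_{x_0}^{p'\alpha}|u|^p\diff v_g$, producing the second factor on the left of \eqref{bizonytalansag}. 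First I would verify the admissibility hypotheses of Theorem~\ref{theorem-main-0}/(ii): the eikonal equation \eqref{eikonal} gives $|\nabla_g d_{x_0}|=1$ a.e., one has $H(0)=H'(0)=0$, $H'(s)\ne0$ for $s\ne0$ and $G(t)=t^{\alpha}\ne0$ on $(0,\infty)$, while \ref{item:C_0-no-w-0} holds on the support of any $u\in C_0^\infty(M\setminus\{x_0\})$, where $\rho$ stays bounded away from $0$ and $\infty$.

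For the numerator I would insert the Laplace comparison (Theorem~\ref{comparison-theorem}/(I)/(i)), $\Delta_g d_{x_0}\ge (n-1){\bf ct}_\kappa(d_{x_0})$, which is legitimate because $G(\rho)=\rho^{\alpha}\ge0$ and $H(u)\ge0$, to obtain the pointwise bound $[G'(\rho)+G(\rho)\Delta_g\rho]H(u)\ge[\alpha\rho^{\alpha-1}+(n-1)\rho^{\alpha}{\bf ct}_\kappa(\rho)]\tfrac{|u|^p}{p}$. Using the identity $t\,{\bf ct}_\kappa(t)=1+{\bf D}_\kappa(t)$ from \eqref{D-function}, the bracket rearranges to $(n+\alpha-1)\rho^{\alpha-1}\bigl(1+\tfrac{n-1}{n+\alpha-1}{\bf D}_\kappa(\rho)\bigr)$. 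Here the hypotheses pay off: $n>p$ and $\alpha>1-p$ yield $n+\alpha-1>n-p>0$, and ${\bf D}_\kappa\ge0$ for $\kappa\le0$, so the numerator integral is nonnegative and the absolute value in \eqref{multiplicative-no-weight-0} may be dropped. Taking the $p$-th root of \eqref{multiplicative-no-weight-0} (noting $\tfrac{p-1}{p}=\tfrac{1}{p'}$) and rearranging then gives \eqref{bizonytalansag} for every $u\in C_0^\infty(M\setminus\{x_0\})$. Since $p<n$, the singleton $\{x_0\}$ has vanishing $p$-capacity, so a truncation argument as in Remark~\ref{remark-capacity} extends the inequality to all $u\in C_0^\infty(M)$.

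The sharpness of $\tfrac{n+\alpha-1}{p}$ is the genuinely delicate point, and I would treat it by a concentration argument reducing to the Euclidean, scale-invariant model. Inspecting the equality cases, tightness of the underlying convexity inequality forces radial test functions $u=u(\rho)$ solving $u'(\rho)/u(\rho)=-\rho^{\alpha/(p-1)}$, i.e.\ the generalized Gaussian $u(r)=\exp\!\bigl(-r^{\beta}/\beta\bigr)$ with $\beta=\tfrac{\alpha+p-1}{p-1}>0$. I would transplant dilated copies of this profile near $x_0$ through normal coordinates and let the concentration scale tend to $x_0$; there the metric is asymptotically Euclidean, $d_{x_0}(x)=|x|(1+O(|x|^2))$, and ${\bf D}_\kappa(d_{x_0})\to0$, so every comparison step becomes asymptotically an equality and all three weighted integrals converge to their flat counterparts. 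Because the Euclidean version of \eqref{bizonytalansag} (the case $\kappa=0$, where ${\bf D}_0\equiv0$) is invariant under dilations $u\mapsto u(\lambda\,\cdot)$, the quotient along this family tends exactly to $\tfrac{n+\alpha-1}{p}$. I expect this asymptotic analysis -- controlling the curvature and volume corrections along the concentrating family and checking the integrability of the limiting profile, for which the range $\alpha\le1$ is convenient -- to be the main obstacle; it is carried out in the standard manner of Yang, Su and Kong, Zhao, and Krist\'aly.
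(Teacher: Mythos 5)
Your proof of the inequality itself coincides with the paper's: the same choices $\rho=d_{x_0}$, $G(t)=t^\alpha$, $H(s)=|s|^p/p$ in the multiplicative form \eqref{multiplicative-no-weight-0}, the Laplace comparison $\Delta_g d_{x_0}\ge (n-1){\bf ct}_\kappa(d_{x_0})$ (legitimate since $G(\rho)H(u)\ge0$), the identity $t\,{\bf ct}_\kappa(t)=1+{\bf D}_\kappa(t)$, the positivity $n+\alpha-1>n-p>0$ allowing the absolute value to be dropped, and the capacity extension from $C_0^\infty(M\setminus\{x_0\})$ to $C_0^\infty(M)$ (the paper is terse on this last point; your invocation of Remark~\ref{remark-capacity} with $p<n$ is exactly what is needed). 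The only substantive difference is in the packaging of the sharpness step. The paper argues by contradiction: assuming a constant $C>\frac{n+\alpha-1}{p}$, it discards the nonnegative ${\bf D}_\kappa$ term, transfers the inequality to a small Euclidean ball via the two-sided metric estimate $(1-\varepsilon)\delta_{ij}\le g_{ij}\le(1+\varepsilon)\delta_{ij}$, uses the scaling $w_\lambda(x)=u(\lambda x)$ to remove the ball constraint and obtain a global, $\lambda$-independent Euclidean inequality, and finally tests with $u(x)=e^{-|x|^{1+\alpha/(p-1)}/p}$, computing via the Gamma function that $\frac{n+\alpha-1}{p}\ge C_\varepsilon$, a contradiction. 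You instead transplant dilates of the Euclidean extremal profile onto the manifold and show the quotient converges to $\frac{n+\alpha-1}{p}$; this is the same localization-plus-scale-invariance idea run in the opposite direction (flat near-extremizers pushed to $M$, rather than the manifold inequality pulled back to $\mathbb{R}^n$). Your direction is also correct, but it carries slightly more bookkeeping: the profile $e^{-r^\beta/\beta}$ is not compactly supported, so you must truncate at a large radius $R_0$ before scaling so that the supports $B_{x_0}(R_0/\lambda)$ shrink, making ${\bf D}_\kappa=O(d_{x_0}^2)$ and the metric distortion uniformly small, and then run a diagonal argument in $(R_0,\lambda)$; the paper's contradiction route avoids estimating manifold integrals of the transplanted family altogether, needing the flat Gamma computation only once. (Incidentally, your normalization $e^{-r^\beta/\beta}$ versus the paper's $e^{-r^\beta/p}$ is immaterial: the multiplicative form already optimizes over the scalar multiple of $H$, so every member of the family $e^{-cr^\beta}$, $c>0$, attains the Euclidean constant exactly. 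Also, in normal coordinates one has $d_{x_0}(x)=|x|$ exactly, so your $d_{x_0}(x)=|x|(1+O(|x|^2))$ is true but weaker than needed to state.)
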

\begin{proof}
	Let us choose $$\rho=d_{x_0},\quad G(t)=t^\alpha\quad\mbox{and}\quad H(s)=\frac{|s|^p}{p},\quad \forall t>0,\ \forall s\in\mathbb R.$$ A simple computation shows that \ref{item:C_0} holds, due to our assumptions on $\alpha, p$ and $n$. Since $$\Delta_{g,p}\rho=\Delta_{g}\rho\geq (n-1){\bf ct}_\kappa(\rho),$$ see Theorem \ref{comparison-theorem}/(I)/(i), due to Theorem \ref{theorem-main-0}/(ii),  the inequality  \eqref{bizonytalansag} holds for every $u\in C_0^\infty(M\setminus \{x_0\}).$ It remains to extend \eqref{bizonytalansag} to the whole space $C_0^\infty(M).$

	We assume by contradiction that there exists $C>\frac{n+\alpha-1}{p}$ such that
	$$
	\left(\int_M	|\nabla_gu|^{p} \diff v_g\right)^\frac{1}{p}\left(\int_Md_{x_0}^{p'\alpha}|u|^p\diff v_g\right)^\frac{1}{p'}\geq C \int_M\left(1+\frac{n-1}{n+\alpha-1}{\bf D}_\kappa(d_{x_0})\right)d_{x_0}^{\alpha-1}|u|^p\diff v_g, \quad\forall u\in C_0^\infty(M).
	$$
	Since ${\bf D}_\kappa\geq 0$, the latter inequality implies that
	\begin{equation}\label{minek-kell}
		\left(\int_M	|\nabla_gu|^{p} \diff v_g\right)^\frac{1}{p}\left(\int_Md_{x_0}^{p'\alpha}|u|^p\diff v_g\right)^\frac{1}{p'}\geq C \int_Md_{x_0}^{\alpha-1}|u|^p\diff v_g, \quad\forall  u\in C_0^\infty(M).
	\end{equation}

	Fix $\varepsilon>0$ arbitrarily; then there exists a number $r>0$ and a  local diffeomorphism $\phi\colon B_0(r) \to M$ such that $\phi(0)=x_0$ and in the sense of bilinear forms, the    components $g_{ij}$
	of the metric $g$ satisfy on $\phi(B_0(r))$ the estimates
	\begin{equation}\label{two-sided}
		(1-\varepsilon)\delta_{ij}\leq g_{ij} \leq (1+\varepsilon)\delta_{ij},
	\end{equation} see Hebey \cite{Hebey}; here, as above, $B_0(r)$ is the $n$-dimensional Euclidean ball of center $0$ and radius $r>0$.

	Due to relations \eqref{minek-kell} and \eqref{two-sided}, for $0<\varepsilon\ll 1$ one can find $\tilde r>0$ with the property that
	for every $r\in (0,\tilde r)$ and $w\in
	C_0^\infty(B_0(r))$,
	the following inequality holds 
	\begin{equation}\label{c-hpw-uj-meg}
		\left(\int_{B_0(r)}	|\nabla w|^{p} \diff x\right)^\frac{1}{p}\left(\int_{B_0(r)}|x|^{p'\alpha}|w|^p\diff x\right)^\frac{1}{p'}\geq C\left(\frac{1-\varepsilon}{1+\varepsilon}\right)^{n+\alpha+1} \int_{B_0(r)}|x|^{\alpha-1}|w|^p\diff x.
	\end{equation}
	By assumption, $C>\frac{n+\alpha-1}{p}$, thus we may choose $0<\varepsilon\ll 1$ such that
	\begin{equation}\label{c-eps}
		C_\varepsilon:=C\left(\frac{1-\varepsilon}{1+\varepsilon}\right)^{n+\alpha+1}>\frac{n+\alpha-1}{p}.
	\end{equation}
	Let $u\in C_0^\infty(\mathbb R^n)$ be fixed arbitrarily and define the scaled function
	$w_\lambda(x)=u(\lambda x)$, $\lambda>0.$ For large enough $\lambda>0,$ it follows that
	$w_\lambda\in C_0^\infty(B_0(r))$; accordingly, we may use $w_\lambda$ as a test function in    (\ref{c-hpw-uj-meg}), obtaining
	$$		\left(\int_{B_0(r)}	|\nabla w_\lambda|^{p} \diff x\right)^\frac{1}{p}\left(\int_{B_0(r)}|x|^{p'\alpha}|w_\lambda|^p\diff x\right)^\frac{1}{p'}\geq C_\varepsilon \int_{B_0(r)}|x|^{\alpha-1}|w_\lambda|^p\diff x.$$
	Making the suitable change of variables, we have  the scaling properties
	\begin{align*}
		\int_{B_0(r)}|\nabla w_\lambda|^p{\text d}x&=\lambda^{p-n}\int_{\mathbb R^n}|\nabla u|^p{\text d}x,\\ 
		\int_{B_0(r)}|x|^{p'\alpha}|w_\lambda|^p\diff x&=\lambda^{-p'\alpha-n}\int_{\mathbb{R}^n}|x|^{p'\alpha}|u|^p\diff x,\\
		\int_{B_0(r)}|x|^{\alpha-1}|w_\lambda|^p\diff x&=\lambda^{-\alpha+1-n}\int_{\mathbb{R}^n}|x|^{\alpha-1}|w_\lambda|^p\diff x.
	\end{align*}
	In particular, by the scaling properties and the latter inequality, we obtain the $\lambda$-independent inequality
	$$ \left(\int_{\mathbb R^n}	|\nabla u|^{p} \diff x\right)^\frac{1}{p}\left(\int_{\mathbb R^n}|x|^{p'\alpha}|u|^p\diff x\right)^\frac{1}{p'}\geq C_\varepsilon \int_{\mathbb R^n}|x|^{\alpha-1}|u|^p\diff x.$$
	Since $-p+1<\alpha\leq 1,$ one can approximate  $$u(x)=e^{-\frac{|x|^{1+\frac{\alpha}{p-1}}}{p}},\quad \forall x\in \mathbb R^n,$$ by functions from $C_0^\infty(\mathbb R^n)$, and we may use it as a test function in the latter inequality. Taking polar coordinates and using the relation
	$$\int_0^\infty e^{-s^\beta}s^\gamma\diff s=\frac{1}{\beta}\Gamma\left(\frac{\gamma+1}{\beta}\right),\quad \forall\beta>0,\ \forall\gamma>-1,$$ we obtain that $\frac{n+\alpha-1}{p}\geq C_\varepsilon$, which contradicts \eqref{c-eps}, showing the sharpness of $\frac{n+\alpha-1}{p}$ in \eqref{bizonytalansag}.
\end{proof}

\begin{remark}
	If equality holds in \eqref{bizonytalansag} for some positive function $u\in W^{1,p}(M)$, then we should have also the equality $\Delta_{g}d_{x_0}= (n-1){\bf ct}_\kappa(d_{x_0})$, which implies that the manifold $(M,g)$ is isometric to the model space form ${\bf M}_\kappa^n$, see  Theorem \ref{comparison-theorem}; this rigidity result is known for $\kappa=0$  from Krist\'aly \cite{Kristaly-JMPA} (for $p=2$) and Nguyen \cite{Nguyen-Royal-CKN} (for $p>1$).
\end{remark}

The following result is a counterpart of Theorem \ref{Uncertainty-theorem}, providing the \textit{rigidity} of Riemannian manifolds with ${\bf Ric}\geq  \kappa (n-1)g$ for some $\kappa\leq 0$ supporting the uncertainty principle  \hyperref[bizonytalansag]{$({\textbf{UP}})_{\kappa}$}.\ Similar results have been obtained first by Krist\'aly \cite{Kristaly-JMPA} (for $p=2$ and $\alpha=1$) and then by Nguyen \cite{Nguyen-Royal-CKN} (for generic $p>1$), both of them considering only the case $\kappa=0$. Now, we have a more general result, valid for every $\kappa\leq 0:$

\begin{theorem} \label{Uncertainty-theorem-rigid}	Let $(M,g)$ be an $n$-dimensional complete, noncompact Riemannian manifold with $n\geq 2,$ such that ${\bf Ric}\geq  \kappa (n-1)g$ for some $\kappa\leq 0$. Suppose that  \eqref{bizonytalansag} holds for some $x_0\in M$ and the parameters $\alpha,p,n$ verify either $-p+1<\alpha\le 1< p< n$, when $\kappa=0$, or $0<\alpha\leq 1<p<n$, when  $\kappa<0$. Then $(M,g)$ is isometric to the model space form ${\bf M}_\kappa^n$.
\end{theorem}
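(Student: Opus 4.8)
The plan is to feed the \emph{virtual extremizer} of \eqref{bizonytalansag} into the generic inequality of Theorem \ref{theorem-main-0} and to play the assumed validity of \eqref{bizonytalansag} against the \emph{reverse} Laplace comparison supplied by the Ricci lower bound. Keeping the same data as in the proof of Theorem \ref{Uncertainty-theorem}, namely $\rho=d_{x_0}$, $G(t)=t^\alpha$ and $H(s)=|s|^p/p$, I would single out the radial profile
$$u_0(x)=\exp\left(-\frac{d_{x_0}(x)^\gamma}{\gamma}\right),\qquad \gamma:=\frac{\alpha+p-1}{p-1}>0,$$
which is precisely the function making the pointwise convexity estimate $|\xi|^p\ge p|\eta|^{p-2}\xi\eta-(p-1)|\eta|^p$ behind \eqref{additive-no-weight-0} an \emph{equality}: a direct computation gives $\nabla_g u_0=-d_{x_0}^{\alpha/(p-1)}u_0\,\nabla_g d_{x_0}$, which is exactly the vector $\eta$ appearing in that proof, so $\xi=\eta$ everywhere.

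Granting for the moment that $u_0$ is admissible, the identity $\xi=\eta$ produces two exact relations. Write $I_H=\int_M\bigl[\alpha d_{x_0}^{\alpha-1}+d_{x_0}^\alpha\Delta_g d_{x_0}\bigr]u_0^p/p\,\diff v_g$ for the (integrated-by-parts) numerator of Theorem \ref{theorem-main-0}/(ii) and $J_H=\int_M d_{x_0}^{p'\alpha}u_0^p\,\diff v_g$. Since $\nabla_g u_0=\eta$, one has $\int_M|\nabla_g u_0|^p\diff v_g=\int_M|\eta|^p\diff v_g=J_H$; feeding this into the additive form at equality, $J_H=pI_H-(p-1)J_H$, gives $I_H=J_H$. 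Consequently the left-hand side of \eqref{bizonytalansag} evaluated at $u_0$ collapses exactly to $I_H$:
$$\left(\int_M|\nabla_g u_0|^p\diff v_g\right)^{1/p}\left(\int_M d_{x_0}^{p'\alpha}u_0^p\diff v_g\right)^{1/p'}=J_H^{1/p}J_H^{1/p'}=J_H=I_H.$$

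Now I would invoke the Ricci hypothesis. By Theorem \ref{comparison-theorem}/(II)/(i), ${\bf Ric}\ge\kappa(n-1)g$ yields the reverse Laplace comparison $\Delta_g d_{x_0}\le (n-1){\bf ct}_\kappa(d_{x_0})$ in the distributional sense. Using the elementary identity $\alpha t^{\alpha-1}+(n-1)t^\alpha{\bf ct}_\kappa(t)=(n+\alpha-1)t^{\alpha-1}\bigl(1+\tfrac{n-1}{n+\alpha-1}{\bf D}_\kappa(t)\bigr)$, a short computation gives
$$\frac{n+\alpha-1}{p}R-I_H=\frac1p\int_M d_{x_0}^\alpha\bigl[(n-1){\bf ct}_\kappa(d_{x_0})-\Delta_g d_{x_0}\bigr]u_0^p\diff v_g\ge 0,$$
where $R=\int_M\bigl(1+\tfrac{n-1}{n+\alpha-1}{\bf D}_\kappa(d_{x_0})\bigr)d_{x_0}^{\alpha-1}u_0^p\diff v_g$ is exactly the right-hand integral of \eqref{bizonytalansag} at $u_0$. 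On the other hand, the assumed validity of \eqref{bizonytalansag} at $u_0$, combined with the collapse above, reads $I_H\ge\frac{n+\alpha-1}{p}R$. Hence $\frac{n+\alpha-1}{p}R=I_H$, the nonnegative integrand must vanish, and since $d_{x_0}^\alpha u_0^p>0$ on $M\setminus\{x_0\}$ we conclude $\Delta_g d_{x_0}=(n-1){\bf ct}_\kappa(d_{x_0})$ $\diff v_g$-a.e.; the equality case of Theorem \ref{comparison-theorem} then forces $(M,g)$ to be isometric to ${\bf M}_\kappa^n$.

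The step I expect to be the main obstacle is the \emph{admissibility of $u_0$}, which is not compactly supported. I must (a) verify $u_0\in W^{1,p}(M)$ and the finiteness of $I_H,J_H,R$, and (b) approximate $u_0$ by functions in $C_0^\infty(M\setminus\{x_0\})$ in order to transfer \eqref{bizonytalansag} to $u_0$ and to justify the integration by parts defining $I_H$, interpreting $\Delta_g d_{x_0}$ distributionally (its nonpositive singular part along the cut locus being consistent with the ``$\le$'' comparison). Finiteness is controlled by the Bishop--Gromov bound $\vol_g(B_{x_0}(R))\le V_\kappa(R)$ of Theorem \ref{comparison-theorem}/(II)/(ii), and this is exactly where the dichotomy in the hypotheses originates: for $\kappa<0$ the volume grows like $e^{(n-1)\sqrt{-\kappa}R}$, so convergence of $\int_M e^{-p d_{x_0}^\gamma/\gamma}(\cdots)\diff v_g$ forces $\gamma>1$, i.e.\ $\alpha>0$, whereas for $\kappa=0$ the polynomial growth only requires $\gamma>0$, i.e.\ $\alpha>-p+1$. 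Once the approximation and integrability are secured, the chain of (in)equalities above is rigid and the conclusion follows.
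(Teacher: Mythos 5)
Your proposal is correct and follows essentially the same route as the paper: the paper also tests \eqref{bizonytalansag} on the radial profile $u_0=e^{-d_{x_0}^\gamma/p}$ with $\gamma=1+\frac{\alpha}{p-1}$ (your different normalization $e^{-d_{x_0}^\gamma/\gamma}$ is immaterial), secures finiteness via the Bishop--Gromov bound of Theorem \ref{comparison-theorem}/(II)/(ii) with exactly your dichotomy in $\alpha$, and then closes the same chain $\gamma\int d_{x_0}^{p'\alpha}u_0^p\ge(n+\alpha-1)R\ge\gamma\int d_{x_0}^{p'\alpha}u_0^p$ using the identity $\alpha t^{\alpha-1}+(n-1)t^\alpha{\bf ct}_\kappa(t)=(n+\alpha-1)t^{\alpha-1}\bigl(1+\tfrac{n-1}{n+\alpha-1}{\bf D}_\kappa(t)\bigr)$, the reverse Laplace comparison, and one integration by parts, forcing $\Delta_g d_{x_0}=(n-1){\bf ct}_\kappa(d_{x_0})$ and rigidity. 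Your packaging of the integration by parts through the equality case $\xi=\eta$ of Theorem \ref{theorem-main-0} (yielding $I_H=J_H$) is only a reformulation of the paper's direct computation, and the admissibility issues you flag are treated at the same (brief) level of rigor in the paper.
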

\begin{proof}
	For simplicity of notations, let $\gamma=1+\frac{\alpha}{p-1}>0$.
	Let $u_0\colon M\to (0,\infty)$ be defined by $u_0=e^{-\frac{d_{x_0}^\gamma}{p}}$. Clearly, we may approximate $u_0$ by elements of $C_0^\infty(M)$; moreover, by using the eikonal equation \eqref{eikonal}, it turns out that $$\int_M	|\nabla_g u_0|^{p} \diff v_g=\left(\frac{\gamma}{p}\right)^p\int_Md_{x_0}^{p'\alpha}u_0^p	 \diff v_g.$$
	We first claim that the latter integrals are well defined. To see this, we observe that $\kappa=0$ implies $\gamma>0$ and $V_\kappa(R)=\omega_n R^n$; while $\kappa<0$ implies $\gamma>1$ and $$V_\kappa(R)\sim e^{(n-1)\sqrt{|\kappa}| R}\quad\mbox{as }R\to \infty,$$ up to a multiplicative constant. Now, by the layer cake representation and the volume comparison, see Theorem \ref{comparison-theorem}, we conclude the claim.

	By using $u_0$ as a test function in \eqref{bizonytalansag}, it follows that
	\begin{equation}\label{ineq-kappa}
		{\gamma}\int_Md_{x_0}^{p'\alpha}u_0^p	 \diff v_g\geq (n+\alpha-1) \int_M\left(1+\frac{n-1}{n+\alpha-1}{\bf D}_\kappa(d_{x_0})\right)d_{x_0}^{\alpha-1}u_0^p\diff v_g.
	\end{equation}
	By the Laplace comparison, see Theorem \ref{comparison-theorem}/(II)/(i), we have that $\Delta_g d_{x_0}\leq (n-1){\bf ct}_\kappa(d_{x_0})$. Therefore, by \eqref{ineq-kappa}, an integration by parts and the eikonal equation \eqref{eikonal} imply that
	\begin{align*}
	{\gamma}\int_Md_{x_0}^{p'\alpha}e^{-{d_{x_0}^\gamma}}	 \diff v_g&\geq \int_M\left(n+\alpha-1+{(n-1)}{\bf D}_\kappa(d_{x_0})\right)d_{x_0}^{\alpha-1}e^{-{d_{x_0}^\gamma}}\diff v_g\\
	&=\int_M\left(\alpha+{(n-1)}d_{x_0}{\bf ct}_\kappa(d_{x_0})\right)d_{x_0}^{\alpha-1}e^{-{d_{x_0}^\gamma}}\diff v_g\\
	&\geq \int_M\left(\alpha+d_{x_0}\Delta_g d_{x_0}\right)d_{x_0}^{\alpha-1}e^{-{d_{x_0}^\gamma}}\diff v_g\\
	&=\alpha\int_M d_{x_0}^{\alpha-1}e^{-{d_{x_0}^\gamma}}\diff v_g +\int_M d_{x_0}^{\alpha}e^{-{d_{x_0}^\gamma}}\Delta_g d_{x_0}\diff v_g\\&=\gamma\int_M d_{x_0}^{\alpha+\gamma-1}e^{-{d_{x_0}^\gamma}}\diff v_g
	\\
	&={\gamma}\int_Md_{x_0}^{p'\alpha}e^{-{d_{x_0}^\gamma}}	 \diff v_g,\end{align*}
	where we used the fact that $\alpha+\gamma-1=p'\alpha.$
	Since the two marginal terms are equal (and finite), we should have equality in each steps. In particular, we have that $\Delta_g d_{x_0}= (n-1){\bf ct}_\kappa (d_{x_0})$ on $M\setminus \{x_0\}$, thus by
	Theorem \ref{comparison-theorem} it follows that  $(M,g)$ is isometric to the model space form ${\bf M}_\kappa^n$. 
\end{proof}

\begin{remark}
	The proof of Theorem \ref{Uncertainty-theorem-rigid}
is much simpler than the original rigidity argument performed in Krist\'aly \cite{Kristaly-JMPA} and Nguyen \cite{Nguyen-Royal-CKN}, where  certain ordinary differential inequalities/equations are  compared.
\end{remark}

\begin{remark}
	By using additive-type inequalities we can produce  multiplicative inequalities in spirit of Berchio,  Ganguly,  Grillo and Pinchover \cite[Section  3]{Berchio-Royal}. For example, 	inequality \eqref{new-estimate-0}  combined with Schwarz inequality imply (in the same geometric context as in Corollary \ref{corollary-berchio-etal})  for every $u\in C_0^\infty(M)$ that
	$$
	\left(\int_M |\nabla_g u|^2\diff v_g-(n-2) {|\kappa|}\int_M u^2\diff v_g\right) \left(\int_M d_{x_0}^2 u^2\diff v_g\right) \geq\frac{(n-2)^2}{4} \left(\int_M {u^2}\diff v_g\right)^2
	.$$
	In the special case when $M=\mathbb H_{-1}^n$, the latter inequality reduces to \cite[Corollary 3.1]{Berchio-Royal}. In a similar way, further inequalities can be obtained via the results from \S \ref{interpolation-mckean-hardy}. 
\end{remark}

\subsection{Caffarelli-Kohn-Nirenberg inequalities}\label{CKN-subsection}
In this section we consider a version of the \emph{Caffarelli-Kohn-Nirenberg inequality}, which easily follows from the multiplicative form of Theorem \ref{theorem-weighted-main}.
\begin{theorem}\label{CKN-th}
	Let $(M,g)$ be an $n$-dimensional Cartan-Hadamard manifold, with $n\geq 2$ such that ${\bf K}\leq -\kappa$ for some $\kappa\geq 0$. Let $x_0\in M$ be fixed.
	Suppose that $p,\alpha,r\in \mathbb R$ satisfy
	$$r>p>1,\quad \alpha+p>1\quad\mbox{and}\quad p(n+\alpha-1)>r(n-p)>0,$$
	then for every  $u\in C_0^\infty(M)$ one has
	\begin{equation*}
	\left(\int_M	|\nabla_gu|^{p} \diff v_g\right)^\frac{1}{p}\left(\int_Md_{x_0}^{p'\alpha}|u|^{p'(r-1)}\diff v_g\right)^\frac{1}{p'}\geq\frac{n+\alpha-1}{r} \int_M\left(1+\frac{n-1}{n+\alpha-1}{\bf D}_\kappa(d_{x_0})\right)d_{x_0}^{\alpha-1}|u|^r\diff v_g.
	\end{equation*}
	Moreover the constant $\frac{n+\alpha-1}{r}$ is sharp.
\end{theorem}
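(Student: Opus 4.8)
The plan is to apply the multiplicative form, Theorem \ref{theorem-main-0}/(ii), on the punctured manifold $\Omega=M\setminus\{x_0\}$ with the choices
$$\rho=d_{x_0},\quad G(t)=t^\alpha,\quad H(s)=\frac{|s|^r}{r},\qquad \forall t>0,\ \forall s\in\mathbb R.$$
First I would verify the hypotheses. The eikonal equation \eqref{eikonal} gives $|\nabla_g\rho|=1$ $\diff v_g$-a.e., so $\Delta_{g,p}\rho=\Delta_g\rho$. Since $r>1$, the function $H$ is $C^1$ with $H(0)=H'(0)=0$, and $H'(s)=|s|^{r-2}s\neq 0$ for $s\neq 0$, while $G(t)=t^\alpha\neq 0$ on $(0,\infty)$; thus the neighborhood conditions in Theorem \ref{theorem-main-0}/(ii) hold. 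Because every compact subset of $\Omega$ avoids $x_0$, the powers $d_{x_0}^\alpha$ and $d_{x_0}^{\alpha-1}$ are bounded on it, so condition \ref{item:C_0-no-w-0} is satisfied automatically on $\Omega$.

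The heart of the argument is the evaluation of the two integrals in \eqref{multiplicative-no-weight-0}. With the above choices $H'(u)=|u|^{r-2}u$, hence $|H'(u)|^{p'}=|u|^{(r-1)p'}$, and the denominator becomes exactly $\int_\Omega \rho^{p'\alpha}|u|^{p'(r-1)}\diff v_g$. For the numerator I would invoke the Laplace comparison, Theorem \ref{comparison-theorem}/(I)/(i), namely $\Delta_g\rho\geq(n-1){\bf ct}_\kappa(\rho)$, together with the identity $t\,{\bf ct}_\kappa(t)={\bf D}_\kappa(t)+1$ from \eqref{D-function}, to obtain
$$G'(\rho)+G(\rho)\Delta_g\rho=\alpha\rho^{\alpha-1}+\rho^\alpha\Delta_g\rho\geq (n+\alpha-1)\rho^{\alpha-1}\left(1+\frac{n-1}{n+\alpha-1}{\bf D}_\kappa(\rho)\right).$$
The constraints force $n+\alpha-1>0$ (indeed $p(n+\alpha-1)>r(n-p)>0$ with $p>0$ yields this at once), and since $H(u)\geq 0$ and ${\bf D}_\kappa\geq 0$, the displayed integrand is nonnegative; therefore the numerator is positive and bounded below by $\tfrac{n+\alpha-1}{r}\int_\Omega \rho^{\alpha-1}\bigl(1+\tfrac{n-1}{n+\alpha-1}{\bf D}_\kappa(\rho)\bigr)|u|^r\diff v_g$. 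Substituting these estimates into \eqref{multiplicative-no-weight-0} and taking $p$-th roots (using $(p-1)/p=1/p'$) produces the asserted inequality for every $u\in C_0^\infty(\Omega)$. Since $p<n$, the singleton $\{x_0\}$ has zero $p$-capacity, so the inequality extends to all $u\in C_0^\infty(M)$ exactly as in Remark \ref{remark-capacity}.

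For the sharpness I would argue by contradiction, mimicking the proof of Theorem \ref{Uncertainty-theorem}. Assuming a constant $C>\tfrac{n+\alpha-1}{r}$ works, I would first discard the nonnegative ${\bf D}_\kappa$-term, then localize to a small Euclidean ball via the two-sided metric comparison \eqref{two-sided}, obtaining a Euclidean inequality with constant $C_\varepsilon$ slightly below $C$ but still exceeding $\tfrac{n+\alpha-1}{r}$. The crucial structural point is that this reduced Euclidean inequality is scale-invariant: under $w_\lambda(x)=u(\lambda x)$ each of the three factors acquires a power of $\lambda$, and since $\tfrac1p+\tfrac1{p'}=1$ both sides scale identically as $\lambda^{1-\alpha-n}$, so the inequality is $\lambda$-independent. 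Finally I would test against a $C_0^\infty$-approximation of the extremal radial profile $y(t)=\bigl(1+t^{(\alpha+p-1)/(p-1)}\bigr)^{-(p-1)/(r-p)}$, which is precisely the function forcing equality in the convexity step underlying the multiplicative form, and compute the resulting integrals to deduce $\tfrac{n+\alpha-1}{r}\geq C_\varepsilon$, a contradiction.

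The main obstacle is the sharpness computation. One must confirm that the chosen profile drives the Euclidean ratio to $\tfrac{n+\alpha-1}{r}$, which reduces to evaluating Beta-type integrals of the form $\int_0^\infty t^{a}\bigl(1+t^b\bigr)^{-c}\diff t$; a direct check of the decay $y(t)\sim t^{-(\alpha+p-1)/(r-p)}$ shows that the balance condition $p(n+\alpha-1)>r(n-p)$ is exactly what guarantees convergence at infinity, while $\alpha+p>1$ and $p<n$ handle the behaviour near the origin. The remaining delicate point is the routine but careful approximation of the non-compactly-supported extremizer by test functions fitting inside the localization ball, so that the scaling argument applies without boundary losses.
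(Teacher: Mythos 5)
Your proposal is correct and follows essentially the same route as the paper: the paper's proof of Theorem \ref{CKN-th} likewise applies the multiplicative form of Theorem \ref{theorem-main-0} with $\rho=d_{x_0}$, $G(t)=t^\alpha$ and $H(s)=\frac{|s|^r}{r}$ (mirroring Theorem \ref{Uncertainty-theorem}), and proves sharpness by the same localization-plus-scaling contradiction with the Talenti-type test function $u(x)=\bigl(1+|x|^{1+\frac{\alpha}{p-1}}\bigr)^{\frac{p-1}{p-r}}$, which is exactly your radial profile. Your added verifications (scale invariance of the reduced Euclidean inequality and the observation that $p(n+\alpha-1)>r(n-p)$ is precisely the convergence condition at infinity) are accurate details the paper leaves implicit.
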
	
\begin{proof}
	The proof is similar to Theorem \ref{Uncertainty-theorem}, the difference is that we choose $H(s)=\frac{|s|^r}{r}$ for every $s\in\mathbb R$. The sharpness of $\frac{n+\alpha-1}{r}$ is based on the same scaling argument as before, the test function (in $\mathbb R^n$) in the last step being the Talenti-type function  $$u(x)=\left(1+|x|^{1+\frac{\alpha}{p-1}}\right)^\frac{p-1}{p-r},\quad\forall x\in \mathbb R^n.$$
	The rest is similar as before. 
\end{proof}

As we already pointed out in the introduction, various forms of $H$ and $G$ in Theorem \ref{theorem-main-0} produce well known  or new functional inequalities. In this spirit, we conclude the paper  by an unusual Caffarelli-Kohn-Nirenberg-type inequality, by inviting  the interested reader to build further functional inequalities through Theorems \ref{theorem-main-0} \& \ref{theorem-weighted-main}.

\begin{theorem}\label{thm-final}
Let $(M,g)$ be an $n$-dimensional Cartan-Hadamard manifold $(n\geq 2),$ such that ${\bf K}\leq \kappa<0$. Then for every  $u\in C_0^\infty(M)\setminus \{0\}$ and $c\in \mathbb R$, we have
\begin{equation}\label{ckn-fura}
	\int_M	|\nabla_gu|^{2} \diff v_g \geq \frac{\left(\displaystyle \int_M {\bf s}_c^2(u)\diff v_g\right)^2}{\displaystyle\int_M {\bf s}_c^2(2u)\diff v_g}(n-1)^2|\kappa|.
\end{equation}
In particular,  we also have  McKean's spectral gap estimate
$$\lambda_{1,2}(M)\geq \frac{(n-1)^2}{4}|\kappa|.$$
\end{theorem}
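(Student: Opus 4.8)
The plan is to read off \eqref{ckn-fura} as a direct specialization of the multiplicative form, Theorem \ref{theorem-main-0}/(ii), taken with $p=2$ (so that $p'=2$). Fix $x_0\in M$ and choose $\rho=d_{x_0}$, the constant weight-function $G\equiv 1$, and $H(s)={\bf s}_c^2(s)$. Since ${\bf s}_c(0)=0$ we have $H(0)=0$, and from $H'(s)=2{\bf s}_c(s){\bf s}_c'(s)$ we get $H'(0)=0$; moreover $G\equiv 1\neq 0$ and condition \ref{item:C_0-no-w-0} is trivially satisfied. For every $c\in\mathbb R$ there is a punctured neighborhood of $0$ on which $H'(s)\neq 0$ (for $c>0$ take $0<|s|<\pi/(2\sqrt c)$), so all hypotheses of the multiplicative form hold.

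The algebraic heart of the argument is that the double-angle formulas for $\sinh$, $\sin$, and the linear case collapse into the single relation $H'(s)={\bf s}_c(2s)$, since ${\bf s}_c(2s)=2{\bf s}_c(s){\bf s}_c'(s)$ uniformly in the three cases $c>0$, $c=0$, $c<0$. Consequently $|H'(u)|^{p'}={\bf s}_c^2(2u)$, so the denominator in \eqref{multiplicative-no-weight-0} is exactly $\int_M {\bf s}_c^2(2u)\diff v_g$. For the numerator, $G\equiv 1$ leaves $\int_M \Delta_g d_{x_0}\,{\bf s}_c^2(u)\diff v_g$; the Laplace comparison, Theorem \ref{comparison-theorem}/(I)/(i), gives $\Delta_g d_{x_0}\geq (n-1){\bf ct}_\kappa(d_{x_0})=(n-1)\sqrt{-\kappa}\coth(\sqrt{-\kappa}\,d_{x_0})\geq (n-1)\sqrt{-\kappa}$, and since ${\bf s}_c^2(u)\geq 0$ pointwise the numerator is bounded below by $(n-1)^2|\kappa|\left(\int_M {\bf s}_c^2(u)\diff v_g\right)^2$. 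Inserting the two bounds into \eqref{multiplicative-no-weight-0} yields \eqref{ckn-fura} immediately.

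For the spectral consequence, the plan is a scaling argument: apply \eqref{ckn-fura} to $\lambda u$ with $\lambda>0$, divide by $\lambda^2$, and let $\lambda\to 0^+$. Using ${\bf s}_c(t)=t+O(t^3)$ near the origin together with dominated convergence on the compact support of $u$, one gets $\lambda^{-2}\int_M {\bf s}_c^2(\lambda u)\diff v_g\to \int_M u^2\diff v_g$ and $\lambda^{-2}\int_M {\bf s}_c^2(2\lambda u)\diff v_g\to 4\int_M u^2\diff v_g$. Hence the right-hand side ratio, after division by $\lambda^2$, converges to $\frac14\int_M u^2\diff v_g$, and the limiting inequality reads $\int_M |\nabla_g u|^2\diff v_g\geq \frac{(n-1)^2}{4}|\kappa|\int_M u^2\diff v_g$, i.e. $\lambda_{1,2}(M)\geq \frac{(n-1)^2}{4}|\kappa|$. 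Note that a pointwise estimate cannot replace this limit, since for $c\le 0$ one has ${\bf s}_c^2(2u)\geq 4{\bf s}_c^2(u)$, which bounds the ratio in the \emph{wrong} direction; the optimal constant $\frac14$ is only recovered in the small-amplitude limit.

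I expect the only genuine difficulties to be bookkeeping rather than conceptual. One must justify applying Theorem \ref{theorem-main-0}/(ii) on $\Omega=M$ despite the singularity of $\Delta_g d_{x_0}$ at $x_0$; this is handled by reading the Laplace comparison distributionally and using that the test weight ${\bf s}_c^2(u)\geq 0$, or, as in Remark \ref{remark-capacity}, by first restricting to $C_0^\infty(M\setminus\{x_0\})$ and extending via the vanishing $2$-capacity of a point. The remaining care is to confirm the identity $H'(s)={\bf s}_c(2s)$ and the near-origin expansion of ${\bf s}_c$ uniformly across the sign of $c$, both of which are routine.
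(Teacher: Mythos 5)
Your derivation of \eqref{ckn-fura} is correct and coincides with the paper's proof: the same choices $p=2$, $\rho=d_{x_0}$, $G\equiv 1$, $H(s)={\bf s}_c^2(s)$ in Theorem \ref{theorem-main-0}/(ii), the same double-angle identity $H'(s)=2{\bf s}_c(s){\bf s}_c'(s)={\bf s}_c(2s)$, and the same use of the Laplace comparison $\Delta_g d_{x_0}\geq (n-1){\bf ct}_\kappa(d_{x_0})\geq (n-1)\sqrt{-\kappa}$; your extra care about the singularity of $\Delta_g d_{x_0}$ at $x_0$ (distributional reading, or restriction to $C_0^\infty(M\setminus\{x_0\})$ plus the capacity extension of Remark \ref{remark-capacity}) is, if anything, more explicit than the paper's ``follows at once.'' The only genuine divergence is in the spectral corollary: you run a small-amplitude scaling limit $\lambda\to 0^+$, valid for every fixed $c$, whereas the paper simply sets $c=0$, where ${\bf s}_0(u)=u$ and ${\bf s}_0^2(2u)=4u^2$, so that \eqref{ckn-fura} reads $\int_M|\nabla_g u|^2\diff v_g\geq \frac{(n-1)^2}{4}|\kappa|\int_M u^2\diff v_g$ with no limit needed --- your observation that the pointwise bound ${\bf s}_c^2(2u)\geq 4{\bf s}_c^2(u)$ goes the wrong way for $c\le 0$ is accurate, but it only shows the limit is needed for $c\neq 0$, and you overlooked that the theorem already includes the case $c=0$ in which the constant $\frac14$ is exact rather than asymptotic. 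Your route buys a slightly stronger remark (McKean is recovered in the small-amplitude regime from each member of the $c$-family), at the cost of a dominated-convergence argument the paper avoids entirely.
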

\begin{proof}
	We apply Theorem \ref{theorem-main-0}/(ii) with the choices $p=2$, $H(s)={\bf s}^2_c(s)$ for $s\in \mathbb R$, $G\equiv 1$, and $\rho=d_{x_0}$ for some $x_0\in M$. Since $\Delta_g d_{x_0}\geq (n-1){\bf ct}_\kappa(d_{x_0})\geq (n-1)\sqrt{-\kappa}$, see Theorem \ref{comparison-theorem}/(I)/(i), inequality \eqref{ckn-fura} follows at once. For $c=0$, the $L^2$-McKean spectral estimate follows.
\end{proof}

\noindent \textbf{Acknowledgment.} The authors thank the anonymous Referees for their valuable comments
and suggestions that significantly improved the presentation of the manuscript.\\

\noindent \textbf{Declaration}

\textit{Conflict of interest.} The authors state that there is no conflict of interest.


\begin{thebibliography}{99}

	
	
	\bibitem{ACR}	Adimurthi, N. Chaudhuri, N. Ramaswamy, \textsl{An improved Hardy Sobolev inequality and its applications}. Proc. Amer. Math.
		Soc. \textbf{130} (2002), 489--505.
		
	\bibitem{Japanok} K.	Akutagawa, H. Kumura, \textsl{Geometric relative Hardy inequalities and the discrete spectrum of
		Schr\"odinger operators on manifolds}. Calc. Var. Partial Differ. Equ. \textbf{48} (2013), 67--88.
	\bibitem{All} W. Allegretto, \textsl{On the equivalence of two types of oscillation for elliptic operators}. Pac. J. Math., \textbf{55}(2) (1974), 319--328.
		
		
	%
	%
	%
		
		
		\bibitem{BalEvLew1} A. A. Balinsky, W. D. Evans, R. T. Lewis, The Analysis and Geometry of Hardy's Inequality, Springer Verlag Universitext, (2015).
	%
		
		
	\bibitem{BFT} G.	Barbatis, S. Filippas, A. Tertikas,  \textsl{A unified approach
		to improved $L^p$ Hardy inequalities with best constants}. Trans. Amer.
		Math. Soc. \textbf{356} (2004), 2169--2196.
		
		
		\bibitem{Berchio-JFA} E. Berchio, D. Ganguly, G. Grillo, \textsl{Sharp Poincar\'e--Hardy and Poincar\'e--Rellich inequalities on the hyperbolic space}. J. Funct. Anal. \textbf{272} (4) (2017) 1661--1703.
		
		
		\bibitem{Berchio-Nonlinear} E. Berchio, L. D'Ambrosio, D. Ganguly, G. Grillo, \textsl{Improved $L^p$-Poincar\'e inequalities on the hyperbolic
		space}. Nonlinear Anal. \textbf{157} (2017), 146--166.
		
		
		\bibitem{Berchio-Royal} E. Berchio, D. Ganguly, G. Grillo, Y. Pinchover,  \textsl{An optimal improvement for the Hardy inequality on
		the hyperbolic space and related manifolds}. Proc. Roy. Soc. Edinburgh Sect. A \textbf{150}(4) (2020), 1699--1736.
	
			\bibitem{Berchio-JMAA} E. Berchio, D. Ganguly, P. Roychowdhury, \textsl{On some strong Poincar\'e inequalities on Riemannian models
		and their improvements}. J. Math. Anal. Appl. \textbf{490}(1) (2020), 124213.
		
		\bibitem{Berchio-CV} E. Berchio, D. Ganguly, P. Roychowdhury, \textsl{Hardy-Rellich and second order Poincar\'e identities on the hyperbolic space via Bessel pairs.} Calc. Var. Partial Differential Equations \textbf{61} (2022), no. 4, Paper No. 130, 24 pp.
		
		\bibitem{Bobkov-Gotze} S. Bobkov, F. Götze, \textit{Hardy Type Inequalities via Riccati and Sturm-Liouville Equations}. In: V. Maz'ya, Sobolev Spaces in Mathematics I: Sobolev Type Inequalities. International Mathematical Series, vol 8., pp 69--86, New York, 2009.
		
		\bibitem{BM} H. Brezis, M. Marcus, \textsl{Hardy's inequalities revisited}. Ann.
		Scuola Norm. Sup. Pisa Cl. Sci.  \textbf{25} (1997), 217--237.
		
		\bibitem{BV} H. Brezis, J. L. V\'azquez, \textsl{Blow-up solutions of some nonlinear elliptic problems}. Rev. Mat. Univ. Compl. Madrid \textbf{10} (1997), 443--469.
		
		
	%
	%
		
		\bibitem{Carron} G. Carron, \textsl{In\'egalit\'es de Hardy sur les vari\'et\'es riemanniennes non-compactes}. J. Math. Pures Appl.  \textbf{76} (10) (1997), 883--891.
		
		\bibitem{Car-Cav-JMAA} F. G. de S. Carvalho, M. P de A.  Cavalcante,  \textsl{On the fundamental tone of the $p$-Laplacian on Riemannian manifolds and applications}. J. Math. Anal. Appl. \textbf{506} (2022), no. 2, Paper No. 125703, 8 pp.
		
		\bibitem{Cazacu-JFA} C. Cazacu, J. Flynn, N. Lam, \textsl{Sharp second order uncertainty principles}. J. Funct. Anal. \textbf{283} (2022), no. 10, Paper No. 109659, 37 pp.
		
		\bibitem{Chavel} I. Chavel,  Eigenvalues in Riemannian geometry. With an appendix by Jozef Dodziuk. Pure and Applied Mathematics, 115. Academic Press, Inc., Orlando, FL, 1984.
	
	\bibitem{CLZ} Y. Chen, N. C. Leung, W. Zhao, \textsl{Sharp Hardy inequalities via Riemannian submanifolds}. J. Geom. Anal. (2022),  32:204, https://doi.org/10.1007/s12220-022-00939-7
	
	\bibitem{Cheng} S.-Y. Cheng,  \textsl{Eigenvalue comparison theorems and its geometric applications}. Math. Z.  \textbf{143} (1975), 289--297.
		
		
		
		\bibitem{DDipierro} L. D'Ambrosio, S. Dipierro,  \textsl{Hardy inequalities on Riemannian manifolds and
		applications}. Ann. Inst. H. Poincar\'e Anal. Non Lin\'eaire, \textbf{31} (2014), 449--475.
		
		\bibitem{Daviesbook} E. B. Davies, Heat kernel and spectral theory, Cambridge University Press, Cambridge, 1989.
		
		
		\bibitem{DeBlGav} F. Della Pietra, G. di Blasio, N. Gavitone, \textsl{Anisotropic Hardy inequalities}.  Proc. Roy. Soc. Edinburgh Sect. A \textbf{148} (2018), no. 3, 483--498.
		
		\bibitem{DFP} 
		B. Devyver, M. Fraas, Y. Pinchover,
		\textsl{Optimal hardy weight for second-order elliptic operator: An answer to a problem of Agmon},
		J. Funct. Anal.,
		\textbf{266} (7) (2014), 4422--4489.
	
		\bibitem{DuyLL}	N. T.  Duy,  N. Lam, G. Lu,
		\textsl{$p$-Bessel pairs, Hardy's identities and inequalities and
		Hardy-Sobolev inequalities with monomial weights}.  J. Geom. Anal. (2022) 32:109, pp. 36.
		
		
	\bibitem{EdmundsTriebel}	D. E. Edmunds,  H. Triebel, \textsl{Sharp Sobolev embedding and related Hardy inequalities:
		the critical case}. Math. Nachr. \textbf{207} (1999), 79--92.
		
	
		
		
		
	
	\bibitem{Fefferman} C. L. Fefferman, \textsl{The uncertainty principle}. Bull. Amer. Math. Soc. (N.S.) \textbf{9} (2) (1983), 129--206.
		
		
		
		
		\bibitem{FilMazTert1} S. Filippas, V. Maz'ya, A. Tertikas, \textsl{Critical Hardy--Sobolev inequalities}. J. Math. Pures Appl. \textbf{87}, no. 1, (2007),   37--56.
		
		\bibitem{FilMazTert3} S. Filippas, V. Maz'ya, A. Tertikas, \textsl{On a question of Brezis and Marcus}.  Calc. Var. \textbf{25}, no. 4, (2006),  491--501.
		
			\bibitem{Fil-Ter} S. Filippas,  A. Tertikas,   \textsl{Optimizing improved Hardy inequalities}. J. Funct. Anal. \textbf{192} (2002), no. 1, 186--233.
		
		\bibitem{Flynn-JGA} J. Flynn, N. Lam, G. Lu, S.  Mazumdar, \textsl{Hardy's identities and inequalities on Cartan-Hadamard manifolds}. J. Geom. Anal. \textbf{33} (2023), no. 1, Paper No. 27, 34 pp.
		
		\bibitem{Flynn-JFA} J. Flynn, N. Lam, G. Lu, \textsl{Hardy-Poincar\'e-Sobolev type inequalities on hyperbolic spaces and related Riemannian manifolds}. J. Funct. Anal. \textbf{283} (2022), no. 12, Paper No. 109714, 37 pp.
		
		\bibitem{Frank} R. Frank, Sobolev Inequalities and Uncertainty Principles in Mathematical Physics: Part 1, Lecture
		Notes, 2011, http://www.math.caltech.edu/~rlfrank/sobweb1.pdf.
		
		 \bibitem{GHL} S. Gallot, D. Hulin, J. Lafontaine, Riemannian Geometry,  Universitext, Springer-Verlag, Berlin, (2004).
		
		\bibitem{GM-book} N. Ghoussoub,  A. Moradifam, Functional inequalities: New Perspectives and  Applications, AMS, Mathematical Surveys and Monographs, Vol. 187, (2013).
		
	\bibitem{GM-Annalen} N. Ghoussoub,  A. Moradifam, \textsl{Bessel pairs and optimal Hardy and Hardy-Rellich inequalities}. Math. Ann. \textbf{349} (1) (2011) 1--57.
	
	\bibitem{GM-PNAS} N. Ghoussoub,  A. Moradifam, \textsl{On the best possible remaining term in the Hardy inequality}. Proc. Natl. Acad. Sci. USA \textbf{105} (2008), no. 37, 13746--13751.
	
	\bibitem{Hardy} G. H. Hardy, \textsl{Note on a theorem of Hilbert}.  Math. Z. \textbf{6} (3-4) (1920),  314--317.
	
	\bibitem{Greene} R. E. Greene, H. H. Wu,  Function theory on manifolds which possess a pole, Vol. 699, Springer, 2006.
	
	\bibitem{Hebey} E. Hebey, Nonlinear Analysis on Manifolds: Sobolev Spaces and Inequalities, Courant Lect. Notes Math., Vol. 5, New York University, Courant Institute of Mathematical Sciences, American Mathematical Society, New York, Providence, RI, (1999).
		
		\bibitem{Heinonenetal} J. Heinonen, T. Kilpel\"ainen, O. Martio, Nonlinear Potential Theory of Degenerate Elliptic Equations, Clarendon Press, Oxford, (1993).
		
		\bibitem{Kristaly-JMPA} A. Krist\'aly, \textsl{Sharp uncertainty principles on Riemannian manifolds: the influence of curvature}.
		J. Math. Pures Appl. (9) \textbf{119} (2018), 326--346.
		
	 \bibitem{KMM}	A. Krist\'aly, \'A. Mester, I. I.  Mezei, \textsl{Sharp Morrey-Sobolev inequalities and eigenvalue problems on Riemannian-Finsler manifolds with nonnegative Ricci curvature}. Commun. Contemp. Math. \textbf{25} (2023), no. 10, Paper No. 2250063, 27 pp.
		
		
		\bibitem{K-Sz} A. Krist\'aly, A. Szak\'al, \textsl{Interpolation between Brezis-V\'azquez and Poincar\'e inequalities on nonnegatively curved spaces: sharpness and rigidities}. J. Differ. Equ. \textbf{266} (10) (2019), 6621--6646.
		
		\bibitem{KO-2009} I. Kombe, M. \"Ozaydin, \textsl{Improved Hardy and Rellich inequalities on Riemannian manifolds}. Trans. Am. Math. Soc. \textbf{361} (12) (2009), 6191--6203.
		
	\bibitem{KO-2013}	I. Kombe, M. \"Ozaydin, \textsl{Hardy-Poincar\'e Rellich and uncertainty principle inequalities on Riemannian manifolds}. Trans. Am. Math. Soc. \textbf{365} (10) (2013), 5035--5050.
	
	\bibitem{Kustner} R. K\"ustner, \textsl{Mapping properties of hypergeometric functions and
	convolutions of starlike or convex functions of order $\alpha$}. Comput. Methods Funct. Theory \textbf{2}
	 (2002), 597--610.
		
		
		
	
	
		
		
		\bibitem{LLL} R. T. Lewis, J. Li, Y. Li, \textsl{A geometric characterization of a sharp Hardy
		inequality}. J. Funct. Anal. \textbf{262}, (2012), 3159--3185.
		
		
		
		
		
		
		
	%
	%
	
	\bibitem{McKean} H. P. McKean, \textsl{An upper bound to the spectrum of $\Delta$ on a manifold of negative curvature}. J. Differential Geom. \textbf{4}
	(1970), 359--366.
	
	
	\bibitem{MWZ} C. Meng, H. Wang, W. Zhao, \textsl{Hardy type inequalities on closed manifolds via Ricci curvature}. Proc.   Roy.  Soc.   Edinburgh Sect. A \textbf{151} (2021), 993--1020.
	
	\bibitem{MPV}  \'A. Mester,  I.R. Peter, Cs.  Varga, \textsl{Sufficient criteria for obtaining Hardy inequalities on Finsler manifolds}. Mediterr. J. Math. \textbf{18} (2021), no. 2, Paper No. 76, 22 pp.

	\bibitem{Moss} W. Moss, J. Piepenbrink, \textsl{Positive solutions of elliptic equations}. Pac. J. Math. \textbf{75}(1) (1978), 219--226.
	
	\bibitem{Muckenhoupt} B. Muckenhoupt,  \textsl{Hardy's inequality with weights}. Studia Math. \textbf{44} (1972), 31--38.
	
	\bibitem{Nguyen-Royal-CKN} V. H. Nguyen,  \textsl{Sharp Caffarelli-Kohn-Nirenberg inequalities on Riemannian manifolds: the influence of curvature}. Proc. Roy. Soc. Edinburgh Sect. A \textbf{152} (2022), no. 1, 102--127.
		
	\bibitem{Nguyen-Royal-Hardy-Rellich} V. H. Nguyen,
	\textsl{New sharp Hardy and Rellich type inequalities on Cartan-Hadamard manifolds and their improvements}.
	Proc. Roy. Soc. Edinburgh Sect. A \textbf{150} (2020), no. 6, 2952--2981.
	
	\bibitem{Pinchover1}
	Y. Pinchover, \textsl{Criticality and ground states for second-order elliptic equations}.
	J. Differ. Equ. \textbf{80} (2) (1989), 237--250.

	\bibitem{Pinchover2}
	Y. Pinchover, \textsl{On criticality and ground states of second order elliptic equations II},
	J. Differ. Equ. \textbf{87} (2) (1990), 353--364.
		
	\bibitem{Ruzhansky-book} M.	Ruzhansky, D. Suragan,  Hardy Inequalities on Homogeneous Groups: 100 Years of Hardy Inequalities, Birkh\"auser Basel, (2019).
	
	\bibitem{Ruzhansky-AdvMath} M. Ruzhansky, D. Suragan,
	\textsl{Hardy and Rellich inequalities, identities, and sharp remainders on homogeneous groups}.
	Adv. Math. \textbf{317} (2017), 799--822.
		
		
	\bibitem{Olver}	F. W. J. Olver, D. W. Lozier, R. F. Boisvert, C. W. Clark (eds.),
		NIST Handbook of Mathematical Functions. Cambridge University Press,
		Cambridge (2010). https://dlmf.nist.gov/
		
		
	\bibitem{Tertikas} A.	Tertikas, N. B. Zographopoulos, \textsl{Best constants in the Hardy-Rellich inequalities and related 	improvements}. Adv. Math. \textbf{209} (2007), 407--459.
		
		
		
		
		
		
		
		
		
		
		
		
		
		
		
	%
	%
	%
	%
	%
	%
	%
	%
	%
		
		\bibitem{Watson} G. N. Watson, A Treatise on the Theory of Bessel Functions, reprint of the second (1944) edition, Cambridge Math.
		Lib., Cambridge University Press, Cambridge, 1995, viii+804 pp.
		
	%
	%
	
	\bibitem{YSK} Q. Yang, D. Su, Y. Kong, \textsl{Hardy inequalities on Riemannian manifolds with negative
	curvature}. Commun. Contemp. Math. \textbf{16} (2014), 24 pages.
		
		\bibitem{Z} W. Zhao,  \textsl{Hardy inequalities with best constants on
	Finsler metric measure manifolds}. J. Geom. Anal.  \textbf{31} (2021), 1992--2032.
	
	
	\end{thebibliography}
\end{document}